\newtheorem{theorem}{Theorem}[section]
\newtheorem{lemma}[theorem]{Lemma}
\newtheorem{corollary}[theorem]{Corollary}
\newtheorem{proposition}[theorem]{Proposition}
\theoremstyle{definition}
\newtheorem{example}[theorem]{Example}  
\newtheorem{definition}[theorem]{Definition}   
\theoremstyle{remark}
\tikzstyle{vertex}=[circle, draw, fill=black, inner sep=0pt, minimum size=1pt]
\renewcommand\S{\mathcal S}
\newcommand\D{\mathcal D}
\newcommand\A{\mathbf A}
\newcommand\B{\mathbf B}
\newcommand\cA{\mathcal A}
\newcommand\I{\mathcal I}
\newcommand\F{\mathcal F}
\renewcommand\P{\mathsf P}
\DeclareMathOperator\fp{fp}
\DeclareMathOperator\exc{exc}
\DeclareMathOperator\wexc{wexc}
\DeclareMathOperator\Exc{Exc}
\newcommand\ant{\alpha}
\DeclareMathOperator\inv{inv}
\DeclareMathOperator\rk{rk}
\DeclareMathOperator\evac{Evac}
\DeclareMathOperator\Rvac{Rvac}
\newcommand\Path{\delta}
\newcommand\rD{\rho_\D}
\newcommand\rS{\rho_\S}
\newcommand\rA{\rho_\cA}
\newcommand\rI{\rho_\I}
\newcommand\rF{\rho_\F}
\DeclareMathOperator\LK{LK}
\newcommand\LKS{\LK_\S}
\newcommand\LKA{\LK_\cA}
\DeclareMathOperator{\sgn}{sgn}
\DeclareMathOperator{\Match}{Match}
\DeclareMathOperator{\Tab}{Tab}
\newcommand\one{\mathds{1}}
\newcommand\ind{\chi}
\def\N{-- ++(0,1)}
\def\E{-- ++(1,0)}
\newcommand\uu{\texttt{u}}
\newcommand\dd{\texttt{d}}
\newcommand{\ultriang}{
\begin{tikzpicture}[scale=.15]
\draw (0,0)--(0,1)--(1,1)--(0,0);
\end{tikzpicture}}
\newcommand{\lrtriang}{
\begin{tikzpicture}[scale=.15]
\draw (0,0)--(1,0)--(1,1)--(0,0);
\end{tikzpicture}}
\newcommand{\Dul}{\D^{\ultriang}}
\newcommand{\Dlr}{\D^{\lrtriang}}
\newcommand{\NC}{\mathcal N}
\newcommand\CSNC{\NC^S}
\newcommand{\card}[1]{{\lvert #1 \rvert}}
\newcommand{\ol}{\overline}
\DeclareMathOperator\jdt{jdt}
\newcommand{\cross}[2]{ \draw[thick] (#1,#2)--(#1-1,#2-1);
     \draw[thick] (#1-1,#2)--(#1,#2-1);}
\renewcommand{\cir}[2]{ \draw[fill] (#1-0.5,#2-0.5) circle (.2);}
\newcommand{\perm}[1]{
 \foreach \y [count=\x] in #1 {
     \cross{\x}{\y}
}}
\newcommand{\permc}[1]{
 \foreach \y [count=\x] in #1 {
     \cir{\x}{\y}
}}
\newcommand{\crosscolor}[3]{ \draw[thick,#3] (#1,#2)--(#1-1,#2-1);
     \draw[thick,#3] (#1-1,#2)--(#1,#2-1);}
\newcommand{\circolor}[3]{ \draw[fill,#3] (#1-0.5,#2-0.5) circle (.2);}
\newcommand{\permcolor}[1]{
 \foreach \y [count=\x] in #1 {
     \ifthenelse{\y>0}{\crosscolor{\x}{\y}{blue}}{\ifthenelse{\y<0}{\crosscolor{\x}{-\y}{red}}{}}
}}
\newcommand{\permccolor}[1]{
 \foreach \y [count=\x] in #1 {
     \ifthenelse{\y>0}{\circolor{\x}{\y}{blue}}{\ifthenelse{\y<0}{\circolor{\x}{-\y}{red}}{}}
}}
\newcommand{\typeA}[1]{
\coordinate (a) at (1,0);
\coordinate (b) at (.5,.6);
    \foreach \row in {0,...,#1} {
        \draw ($\row*(a)$) -- ($#1*(b)+\row*(a)-\row*(b)$);
        \draw ($\row*(a)$) -- ($\row*(b)$);
\foreach \x in {0,...,#1}{
	\foreach \y in {\x,...,#1}{
		\filldraw[fill=white] ($\x*(a)+#1*(b)-\y*(b)$) circle (.1);
}}}}
\newcommand{\typeAtilted}[1]{
\coordinate (a) at (1,1);
\coordinate (b) at (0,1);
    \foreach \row in {0,...,#1} {
        \draw ($\row*(a)$) -- ($#1*(b)+\row*(a)-\row*(b)$);
        \draw ($\row*(a)$) -- ($\row*(b)$);
\foreach \x in {0,...,#1}{
	\foreach \y in {\x,...,#1}{
		\filldraw[fill=white] ($\x*(a)+#1*(b)-\y*(b)$) circle (.1);
}}}}
\newcommand{\typeB}[1]{
\coordinate (a) at (1,0);
\coordinate (b) at (.5,.6);
    \foreach \row in {0,...,#1} {
        \draw ($\row*(a)$) -- ($\row*(a)+2*#1*(b)-2*\row*(b)$);
        \draw ($\row*(b)$)--($\row*(a)$);
        \draw ($2*#1*(b)-\row*(b)$)--($\row*(a)+2*#1*(b)-2*\row*(b)$);
        }
    \foreach \x in {0,...,#1}{
	\foreach \y in {\x,...,#1}{
		\filldraw[fill=white] ($\x*(a)+#1*(b)-\y*(b)$) circle (.1);
        }
    }
    \foreach \y in {0,...,#1}
        \foreach \x in {0,...,\y}{
		\filldraw[fill=white] ($#1*(b)+\y*(b)+\x*(a)-2*\x*(b)$) circle (.1);
        }
}
\newcommand{\el}[2]{\filldraw ($#1*(a)-(a)+#2*(b)-#1*(b)$) circle (.1);}
\newcommand{\an}[2]{\el{#1}{#2} \draw[orange] ($#1*(a)-(a)+#2*(b)-#1*(b)$) circle (.16);}
\newcommand{\andarkgreen}[2]{\el{#1}{#2} \draw[darkgreen] ($#1*(a)-(a)+#2*(b)-#1*(b)$) circle (.16);}
\newcommand\start{circle(.05)}
\newcommand\up{-- ++(b) \start}
\newcommand\dn{-- ++($(a)-(b)$) \start}
\newcommand\match[2]{\draw (90+18-#1*36:1.4)--(90+18-#2*36:1.4);}
\newcommand\matc[2]{\draw(v#1)--(v#2);}
\definecolor{darkgreen}{rgb}{0,0.6,0}
\definecolor{darkblue}{rgb}{0,0,0.7}
\definecolor{darkred}{rgb}{0.7,0,0}
\definecolor{darkbrown}{rgb}{0.5,0.3,0}
\def\dgr{\textcolor{green!50!black}}
\def\rd{\textcolor{red!70}}
\def\ora{\textcolor{orange}}
\def\vio{\textcolor{violet}}
\def\N{-- ++(0,1)}
\def\E{-- ++(1,0)}
\DeclareMathOperator{\Pro}{Pro}
\DeclareMathOperator{\Rot}{Rot}
\DeclareMathOperator{\SYT}{SYT}
\DeclareMathOperator\AST{AST}
\DeclareMathOperator\RSK{RSK}
\newcommand\RSKD{\widehat{\RSK}}
\newcommand\ten{10}
\title{Rowmotion on $321$-avoiding permutations}
\author{Ben Adenbaum and Sergi Elizalde\thanks{Department of Mathematics, Dartmouth College, Hanover, NH 03755. \texttt{benjamin.m.adenbaum.gr@dartmouth.edu}, \texttt{sergi.elizalde@dartmouth.edu}}
}
\date{}
\begin{document}

\maketitle

\begin{abstract}
We give a natural definition of rowmotion for $321$-avoiding permutations, by translating, through bijections involving Dyck paths and the Lalanne--Kreweras involution, the analogous notion for antichains of the positive root poset of type $A$. We prove that some permutation statistics, such as the number of fixed points, are homomesic under rowmotion, meaning that they have a constant average over its orbits. 

Our setting also provides a more natural description of the celebrated Armstrong--Stump--Thomas equivariant bijection between antichains and non-crossing matchings in types $A$ and $B$, by showing that it is equivalent to the Robinson--Schensted--Knuth correspondence on $321$-avoiding permutations permutations. 
\end{abstract}

\noindent {\bf Keywords:} rowmotion, permutation, homomesy, AST bijection, RSK correspondence.

\noindent {\bf MSC 2020:} 05E18, 05A05, 05A19, 06A07.

\section{Introduction}\label{sec:intro}

The goal of this work is two-fold. On the one hand, we initiate the study of pattern-avoiding permutations through the lens of dynamical algebraic combinatorics. On the other hand, we use these permutations to show that a celebrated bijection of Armstrong, Stump and Thomas between certain antichains and non-crossing matchings has a more natural description in terms of the well-known Robinson--Schensted--Knuth correspondence.

Let $\S_n$ denote the set of permutations of $\{1,2,\dots,n\}$.
We say that $\pi\in\S_n$ is {\rm $321$-avoiding} if there do not exist $i<j<k$ such that $\pi(i)>\pi(j)>\pi(k)$. 
Let $\S_n(321)$ denote the set of $321$-avoding permutations in $\S_n$.

Rowmotion is an operation defined on antichains of a finite poset, or equivalently, on its order ideals; see 
Section~\ref{rowmotion} for definitions. 
Historically, rowmotion was first described by Brouwer and Schrijver~\cite{BS74}, and then again by Cameron and Fon-der-Flaas~\cite{CF95} as a composition of certain involutions called toggles. The name of rowmotion comes from the work of Striker and Williams~\cite{SW12} where, for certain posets, rowmotion is described as a composition of toggles along the rows, and it is shown to be related to another operation called promotion.

We will restrict our attention to the poset of positive roots for the type $A$ 
root system. Antichains of this poset are in bijection with Dyck paths and with $321$-avoiding permutations. This will allow us to define a natural rowmotion operation on $\S_n(321)$.

When studying rowmotion, it is common to look for statistics that exhibit a property called {\em homomesy} \cite{PR15}. Given a set $S$ and a bijection $\tau:S\to S$ so that each orbit of the action of $\tau$ on $S$ has finite order, we say that a statistic on $S$ is {\em homomesic} under this action if its average on each orbit is constant. 
More specifically, the statistic is said to be {\em $c$-mesic} if its average over each orbit is $c$. 

We will prove that several statistics on $321$-avoiding permutations, including the number of fixed points (Theorem~\ref{thm:fp}), are homomesic under rowmotion. We will also show (Theorem~\ref{SignRowmotionTheorem}) that the sign statistic on $\S_n(321)$ is preserved by rowmotion when $n$ is odd, and it alternates when $n$ is even. 

In the second part of the paper, we use the viewpoint of $321$-avoiding permutations to shed new light into a celebrated bijection 
of Armstrong, Stump and Thomas \cite{AST13} between antichains in root posets of finite Weyl groups (also known as nonnesting partitions) and noncrossing partitions. This is an equivariant bijection in the sense that it translates rowmotion on antichains into an operation  called Kreweras complementation on noncrossing partitions which, in the classical types, is equivalent to rotation of noncrossing matchings. We will show that, in the case of types $A$ and $B$, the Armstrong--Stump--Thomas (AST) bijection 
has a simple interpretation in terms of the Robinson--Schensted--Knuth (RSK) correspondence applied to $321$-avoiding permutations.

The remainder of the paper is structured as follows. In Section~\ref{background} we review some properties of $321$-avoiding permutations and RSK, in addition to some background on rowmotion. In Section~\ref{defs} we define rowmotion on $321$-avoiding permutations, and use this definition to provide a simplified proof of a result of Hopkins and Joseph \cite[Thm.\ 6.2]{HJ20}
enumerating certain antichains. In Section~\ref{homomesies} we prove homomesy results for rowmotion on $321$-avoiding permutations, and we study the behavior of the sign statistic under rowmotion. Finally, in Section~\ref{sec:AST}, we provide an alternative description of the AST bijection from~\cite{AST13} in types $A$ and $B$ in terms of the RSK correspondence. This description allows us to derive some properties of AST from well-known properties of RSK, and to answer another question of Hopkins and Joseph \cite[Remark 6.7]{HJP}.

\section{Background}\label{background}

In this section we review some notions about Dyck paths, noncrossing matchings, and the RSK correspondence, in particular as it applies to $321$-avoiding permutations. We also provide a basic overview of rowmotion.

\subsection{Dyck paths, permutations, and noncrossing matchings}\label{sec:Dyck}

Let $\D_n$ be the set of words over $\{\uu,\dd\}$ consisting of $n$ $\uu$s and $n$ $\dd$s, and satisfying that every prefix contains at least as many $\uu$s as $\dd$s. Elements of $\D_n$ are called Dyck paths, and they will be drawn in three different ways as lattice paths in $\mathbb{Z}^2$ starting at the origin. Replacing $\uu$ and $\dd$ with $(0,1)$ and $(1,0)$ (respectively, $(1,0)$ and $(0,1)$), we obtain paths that stay weakly above (respectively, below) the diagonal $y=x$. We denote these by $\Dul_n$ (respectively, $\Dlr_n$). The sets $\Dul_n$ and $\Dlr_n$ are in bijection with each other, by simply reflecting along the diagonal $y=x$. 
The third way to draw Dyck paths that we will use is when $\uu$ and $\dd$ are replaced with $(1,1)$ and $(1,-1)$, respectively, so that the resulting path ends at $(2n,0)$ and never goes below the $x$-axis. 
In all cases, a pair of consecutive steps $\uu\dd$ is called a {\em peak}, and a pair $\dd\uu$ is called a {\em valley}. 

Interpreting $\uu$ and $\dd$ steps of $D\in\D_n$ as opening and closing parentheses, respectively, and matching them in the usual way, a pair of matched steps will be called a \emph{tunnel}, following \cite{EP1}. When $D$ is drawn as a path with $(1,1)$ and $(1,-1)$ and steps, a tunnel corresponds to a horizontal segment between two lattice points of $D$ that intersects $D$ only at these two points, and otherwise stays always below $D$. A tunnel is a \emph{centered tunnel} if it is centered with respect to the vertical line $x=n$ through the middle of $D$, a right (left) tunnel if its endpoints are strictly to the right (left) of this line, and a right-across (left-across) tunnel if its endpoints lie on opposite sides of this line and its center is to the right (left) of this line.

Several bijections between $321$-avoiding permutations and Dyck paths are known. 
We can represent $\pi \in \S_n$ as an $n\times n$ array with crosses in squares $(i,\pi(i))$ for $1\le i\le n$; we call this the {\em array} of $\pi$. Rows and columns are indexed using cartesian coordinates, so that $(i,j)$ denotes the cell in the $i$th column from the left and $j$th row from the bottom.
We say that $(i,\pi(i))$ is a {\em fixed point} (respectively {\em excedance}, {\em weak excedance}, {\em deficiency}, {\em weak deficiency}) if
$\pi(i)=i$ (respectively $\pi(i)>i$, $\pi(i)\ge i$, $\pi(i)<i$, $\pi(i)\le i$). 

For $\pi \in \S_n(321)$, let $E_p(\pi)\in\Dul_n$ be the path whose peaks occur at the weak excedances of $\pi$, let $E_v(\pi)\in\Dul_n $ be the path whose valleys occur at the excedances of $\pi$, and let $D_v(\pi)\in\Dlr_n$ be the path whose valleys occur at the weak deficiencies of $\pi$. See Figure~\ref{fig:321Dyck} for an example. These bijections appear in~\cite{Eli}. The bijection that maps $E_p(\pi)$ to $D_v(\pi)$ is known as the {\em Lalanne--Kreweras involution} on Dyck paths \cite{KRE70,LAL92}, which we denote by 
\begin{equation}\label{def:LK}\LK=D_v\circ E_p^{-1}.\end{equation} 

\begin{figure}[h]
\centering
\begin{tikzpicture}[scale=0.5]
 \draw (0,0) grid (9,9);
 \draw[dotted] (0,0)--(9,9);
 \perm{{2,4,1,3,5,8,9,6,7}}
 \draw[green!70!black,dotted,ultra thick] (0,0) \N\E\N\N\E\N\N\N\N\E\E\E\E\N\E\N\E\E;
  \draw[green!70!black] (.9,5.5) node {$E_v(\pi)$};
 \draw[red,ultra thick] (0,0) \N\N\E\N\N\E\E\E\N\E\N\N\N\E\N\E\E\E;
 \draw[red] (3.9,8.5) node {$E_p(\pi)$};
 \draw[blue,ultra thick] (0,0) \E\E\N\E\N\N\E\E\E\E\N\N\N\E\N\E\N\N;
   \draw[blue] (6,2.5) node {$D_v(\pi)$};
\end{tikzpicture}
\caption{The paths $E_p(\pi)$, $E_v(\pi)$ and $D_v(\pi)$ for the permutation $\pi=241358967$.}
\label{fig:321Dyck}
\end{figure}
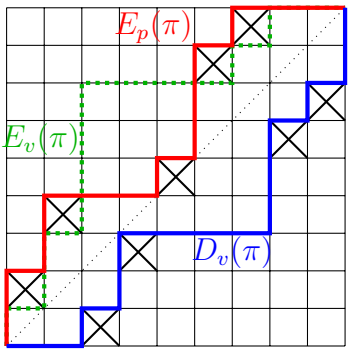

Let $\NC_n$ denote the set of {\em noncrossing matchings} of $\{1,2,\dots,2n\}$, i.e., perfect matchings with the property that there do not exist $i<j<k<\ell$ such that $i$ is matched with $k$ and $j$ is matched with $\ell$. We will draw the points $1,2,\dots,2n$ equally spaced around a circle in clockwise order, so that $1$ and $2n$ are near the top, and vertices $i$ and $2n+1-i$ are symmetrically placed with respect to a vertical axis. 
We draw a line segment, called an arc, connecting each pair of matched points. 

Define a clockwise rotation map $\Rot:\NC_n\to \NC_n$, which takes each arc $(i,j)$ to the arc $(i+1,j+1)$, with addition modulo $2n$. A matching is said to be \emph{centrally symmetric} if it is fixed under $180^\circ$ rotation.  There is a straightforward bijection between Dyck paths and noncrossing matchings, see Figure~\ref{fig:Dyck-matching} for an example.

\begin{definition}\label{def:Match}
Let $\Match:\D_n\to\NC_n$ be the bijection defined as follows.
Given $D\in\D_n$, the points $x$ and $y$ are matched in $\Match(D)$ if the steps of $D$ in positions $x$ and $y$ form a tunnel.
\end{definition}

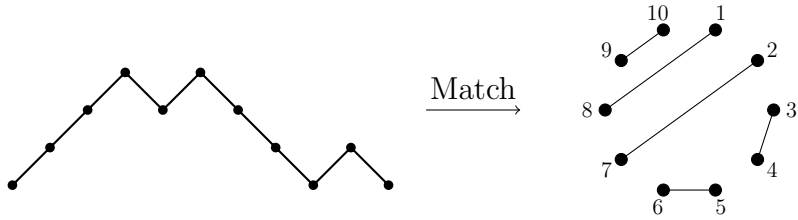
\begin{figure}[h]
\centering
\begin{tikzpicture}
\coordinate (a) at (1,0);
\coordinate (b) at (.5,.5);
    \filldraw[thick] (0,1) \start\up\up\up\dn\up\dn\dn\dn\up\dn;
\draw[-to ] (5.5, 2) --node[above]{$\Match$} (6.75,2);
\begin{scope}[shift={(9,2)},scale=.8]
\foreach \i in {1,...,10} {
\filldraw (90+18-\i*36:1.4) coordinate (v\i) circle (.1);
\draw (90+18-\i*36:1.7) node[scale=.7] {\i};
}
\matc{1}{8}\matc{2}{7}\matc{3}{4}\matc{5}{6}\matc{9}{10}
\end{scope}
\end{tikzpicture}
\caption{A Dyck path with the corresponding noncrossing matching.}
\label{fig:Dyck-matching}
\end{figure}

\subsection{RSK and $321$-avoiding permutations}\label{sec:RSK321Basics}
The RSK correspondence is a bijection between permutations and pairs of standard Young tableaux of the same shape. We refer the reader to~\cite[Sec.\ 7.11]{EC2} for definitions. Given a permutation $\pi\in\S_n$, we denote its image by $\RSK(\pi)=(P,Q)$, where $P$ and $Q$ are standard Young tableaux of the same shape, called the {\em insertion} and the {\em recording} tableaux of $\pi$, respectively. A well-known property of this correspondence \cite[Thm.\ 2]{SCH61} is that the number of rows of $P$ (equivalently, of $Q$) equals the length of the longest decreasing subsequence of $\pi$. In, particular, $\pi$ is $321$-avoiding if and only if $P$ and $Q$ have at most two rows. This property is used to define the following map from $\S_n(321)$ to $\D_n$, 
used in~\cite{EP1}.

\begin{definition}\label{def:RSKD}
Let $\pi\in \S_n(321)$, and suppose that $\RSK(\pi)=(P,Q)$. Define a Dyck path $\RSKD(\pi)$ 
as follows. For $1\le i\le n$, let the $i$th step be a $\uu$ if $i$ is in the top row of $P$, and a $\dd$ otherwise; let the $(2n+1-i)$th step be a $\dd$ if $i$ is in the top row of $Q$, and a $\uu$ otherwise.
\end{definition}

\begin{figure}[h]
\centering
\begin{tikzpicture}
\draw (0,0) node[left] {$41235$};
\draw[->] (.5,0)--node[above]{$\RSK$} (1,0);
\draw(4,0) node {$\bigg (\vio{\young(1235,4)},\quad\dgr{\young(1345,2)}\bigg)$};
\draw[->] (7,0)-- (7.5,0);
\coordinate (a) at (1,0);
\coordinate (b) at (.5,.5);
    \filldraw[thick,violet] (8,-.5) \start\up\up\up\dn\up;
    \filldraw[thick,green!50!black] (10.5,1) \dn\dn\dn\up\dn;
\draw[->] (-.5,.5) to [bend left =15] node[above]{$\RSKD$} (8,.5);
\end{tikzpicture}
\caption{The maps $\RSK$ and $\RSKD$ applied to the permutation $41235$.}
\label{fig:RSK}
\end{figure}
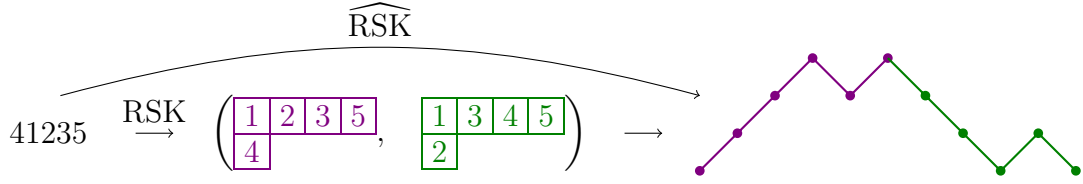

See Figure~\ref{fig:RSK} for an example of the these maps. It is clear that if $\pi$ is $321$-avoiding, then so is its inverse $\pi^{-1}$. An additional consequence of \cite[Thm.\ 2]{SCH61} is that if $\RSK(\pi)=(P,Q)$, then $\RSK(\pi^{-1})=(Q,P)$. This implies the following property of $\RSKD$, which will be used later on.

\begin{lemma}\label{lem:RSKDsym}
For $\pi\in\S_n(321)$, the path $\RSKD(\pi^{-1})$ is the reflection of the path $\RSKD(\pi)$, obtained by reversing the word and swapping $\emph{\uu}$s and $\emph{\dd}$s.
\end{lemma}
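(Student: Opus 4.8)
The plan is to unwind Definition~\ref{def:RSKD} and track what happens to each of the $2n$ steps of $\RSKD(\pi)$ when $\pi$ is replaced by $\pi^{-1}$. Recall that the reflection operation described in the statement sends a Dyck path $w_1 w_2 \cdots w_{2n}$ (a word in $\{\uu,\dd\}$) to the word $\overline{w_{2n}}\,\overline{w_{2n-1}}\cdots \overline{w_1}$, where $\overline{\uu}=\dd$ and $\overline{\dd}=\uu$; note this is still a Dyck path. So the goal is to show that the $k$th step of $\RSKD(\pi^{-1})$ equals the complement of the $(2n+1-k)$th step of $\RSKD(\pi)$, for every $1\le k\le 2n$.

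The key input is the cited fact that $\RSK(\pi)=(P,Q)$ implies $\RSK(\pi^{-1})=(Q,P)$, i.e., inversion swaps the insertion and recording tableaux. First I would split into the two ranges of step-indices used in Definition~\ref{def:RSKD}. For $1\le i\le n$: the $i$th step of $\RSKD(\pi^{-1})$ is $\uu$ iff $i$ is in the top row of the insertion tableau of $\pi^{-1}$, which is $Q$; on the other hand, the $(2n+1-i)$th step of $\RSKD(\pi)$ is $\dd$ iff $i$ is in the top row of $Q$, i.e., it is $\uu$ iff $i$ is \emph{not} in the top row of $Q$. Hence the $i$th step of $\RSKD(\pi^{-1})$ is exactly the complement of the $(2n+1-i)$th step of $\RSKD(\pi)$, as required. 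For the other range, write the step index as $2n+1-i$ with $1\le i\le n$: the $(2n+1-i)$th step of $\RSKD(\pi^{-1})$ is $\dd$ iff $i$ is in the top row of the recording tableau of $\pi^{-1}$, which is $P$; and the $i$th step of $\RSKD(\pi)$ is $\uu$ iff $i$ is in the top row of $P$, equivalently $\dd$ iff $i$ is not in the top row of $P$. Again these two steps are complementary. Since the index $2n+1-i$ ranges over $\{n+1,\dots,2n\}$ as $i$ ranges over $\{1,\dots,n\}$, together with the first range this covers all $2n$ positions, and the reflection identity is established.

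This argument is essentially a bookkeeping exercise once the $\RSK(\pi^{-1})=(Q,P)$ property is in hand, so there is no real obstacle; the only thing to be careful about is matching the asymmetric roles of $P$ and $Q$ (and of $\uu$ versus $\dd$) in the two halves of Definition~\ref{def:RSKD} with the correct half of the reflected word, and checking the off-by-one indexing $i \leftrightarrow 2n+1-i$ is consistent throughout. If desired, I would illustrate with the running example $\pi=41235$ from Figure~\ref{fig:RSK}, whose inverse is $23415$, to confirm the reflected path matches.
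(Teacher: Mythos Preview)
Your argument is correct and follows exactly the approach the paper intends: the paper states the lemma immediately after recalling that $\RSK(\pi^{-1})=(Q,P)$ and simply says this ``implies'' the result, without spelling out the step-by-step bookkeeping you carry out. Your verification that the $i$th step of $\RSKD(\pi^{-1})$ complements the $(2n+1-i)$th step of $\RSKD(\pi)$ in both index ranges is precisely the routine check the paper leaves to the reader.
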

Consequently, the Dyck path $\RSKD(\pi)$ is symmetric if and only if $\pi$ is an involution, i.e., $\pi=\pi^{-1}$.

\subsection{Promotion and evacuation}\label{sec:promotion}

For a partition $\lambda$, let $\SYT(\lambda)$ be the set of standard Young tableaux of shape $\lambda$.
A straightforward bijection $\Tab:\D_n\to\SYT(n,n)$ can be described as follows. For given $D\in\D_n$, the entries in the top (resp.\ bottom) row of $\Tab(D)$ are the indices of the $\uu$ (resp.\ $\dd$) steps of $D$. For example, for the Dyck path in Figure~\ref{fig:Dyck-matching}, the corresponding tableau is $$\young(12359,4678\ten).$$

We begin by recalling the definitions of promotion and evacuation on standard Young tableaux using the Bender--Knuth involutions, which we refer to as toggles. 
\begin{definition}
Let $\lambda$ be a partition of $n$. For $1\le i \le n-1$, define the toggle $t_i:\SYT(\lambda)\to\SYT(\lambda)$ 
by letting $t_i(T)$ be standard Young tableau obtained from $T\in\SYT(\lambda)$ by switching the labels $i$ and $i+1$ if they are in different rows or columns. Note that $t_i$ is an involution. Define promotion and evacuation on $\SYT(\lambda)$ to be the maps $\Pro = t_{n-1}t_{n-2}\cdots t_{2}t_1$ and $\evac = (t_1)(t_2 t_1)\cdots(t_{n-1}t_{n-2}\cdots t_2 t_1)$ respectively.
\end{definition}

\begin{example}\label{ex:Pro}
Here is the computation of $\Pro(T)$ as a composition of toggles, for a tableau $T\in\SYT(5,5)$: 
\newcommand\rone{\rd{1}}
\newcommand\rtwo{\rd{2}}
\newcommand\rthree{\rd{3}}
\newcommand\rfour{\rd{4}}
\newcommand\rfive{\rd{5}}
\newcommand\rsix{\rd{6}}
\newcommand\rseven{\rd{7}}
\newcommand\reight{\rd{8}}
\newcommand\rnine{\rd{9}}
\newcommand\rten{\rd{10}}
\begin{multline*}
T=\young(\rone\rtwo478,3569\ten)\stackrel{t_1}{\to} \young(1\rtwo478,\rthree569\ten)\stackrel{t_2}{\to} \young(1\rthree\rfour78,2569\ten)\stackrel{t_3}{\to} \young(13\rfour78,2\rfive69\ten)\stackrel{t_4}{\to}\young(13\rfive78,24\rsix9\ten)\\
\stackrel{t_5}{\to}\young(135\rseven8,24\rsix9\ten)\stackrel{t_6}{\to}\young(1356\reight,24\rseven9\ten)\stackrel{t_7}{\to}\young(13567,24\reight\rnine\ten)\stackrel{t_8}{\to}\young(13567,248\rnine\rten)\stackrel{t_9}{\to}\young(13567,2489\ten)=\Pro(T)
\end{multline*}
\end{example}

We will use an alternative description of evacuation on standard Young tableaux, which relies on the following rectification process. 
For partitions $\lambda, \mu$ with $\lambda_i\ge \mu_i$ for all $i$, 
the skew shape $\lambda/ \mu$ consists of the boxes in the Young diagram of $\lambda$ which are not in that of $\mu$. An inner corner of $\lambda/\mu$ is a box that is not in $\lambda/\mu$ but both the boxes below and to its right are. 
A standard Young tableau of shape $\lambda/\mu$ is a filling of this shape with distinct entries $1,2,\dots,|\lambda/\mu|$ so that rows and columns are increasing. Denote the set of standard Young tableau of shape $\lambda/\mu$ by $\SYT(\lambda/\mu)$.

Given a tableau in $T\in\SYT(\lambda/\mu)$, let $\jdt(T)$ be the standard Young tableau of straight shape obtained from $T$ via the following procedure, called {\em rectification}. Beginning with an inner corner $c$ of $\lambda/\mu$, swap $c$ with the filled box below or to the right of $c$ with the smallest entry. If, after this swap, $c$ still has filled boxes below or to the right, keep applying this procedure until this is no longer the case. Then repeat with another inner corner of the resulting shape, until there are no more inner corners left, at which point the resulting tableau $\jdt(T)$ has straight shape.

\begin{lemma}[{\cite[Remark 2.4.24]{B94}}]\label{lem:EvacDescription}
For $T\in\SYT(\lambda)$ with $n$ boxes, 
let $\widetilde{T}$ be the skew tableau obtained by rotating $T$ by $180^\circ$ and replacing each entry $i$ with $n+1-i$. Then $$\evac(T)=\jdt(\widetilde{T}).$$ 
\end{lemma}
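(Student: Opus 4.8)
The statement is a known fact (attributed to~\cite{B94}), so the goal is to give a clean self-contained argument rather than invent something new. The plan is to establish the identity $\evac(T)=\jdt(\widetilde T)$ by exhibiting both sides as the output of the same process, namely repeated \emph{deletion-insertion} from the smallest remaining entry, and to track how rotation-and-complementation interacts with jeu de taquin.

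First I would recall the two standard equivalent formulations of $\evac$. The toggle definition $\evac=(t_1)(t_2t_1)\cdots(t_{n-1}\cdots t_1)$ given in the text can be rephrased via the classical ``delete the corner'' algorithm: to compute $\evac(T)$, repeatedly perform a reverse jeu de taquin slide starting from the box containing the largest entry, recording where the vacated inner-corner cell ends up, and place $n, n-1, \dots$ there in turn; equivalently, by symmetry, one can perform forward slides into the cell of $1$. I would cite/sketch that these agree with the toggle definition (this is Sch\"utzenberger's theorem, and is the routine part). The key structural observation is then: for a \emph{straight-shape} tableau $T$, the forward jeu de taquin process that computes $\evac(T)$ is ``the same'' as the rectification process applied to $\widetilde T$, because rotating by $180^\circ$ turns the inner corners of the empty region into the cells one slides through, and complementing entries $i\mapsto n+1-i$ reverses the order in which cells get vacated, converting reverse slides on $T$ into forward slides on $\widetilde T$.

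Concretely, the main steps I would carry out are: (1) set up notation for $\widetilde T$, whose shape is the skew shape $(\mu^{\mathrm{rot}})/\varnothing$ sitting inside a large rectangle, or rather is $\lambda^{\mathrm{rot}}$ minus the complement of $\lambda$ — I would choose the normalization so that $\widetilde T\in\SYT(R/\nu)$ where $R$ is an $\ell\times\lambda_1$ rectangle and $\nu$ is the complement of $\lambda$ rotated; (2) show by induction on $n$ that one rectification slide on $\widetilde T$ (starting from the appropriate inner corner) mirrors one reverse-slide step in the evacuation of $T$, using the confluence / ``slides commute'' property of jeu de taquin to guarantee the order of inner corners chosen does not matter; (3) conclude that after all $n$ slides the straight tableau produced is precisely $\evac(T)$, reading off that entry $n+1-i$ lands where evacuation places entry $i$. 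The induction would peel off the entry $n$ from $T$ (equivalently the entry $1$ from $\widetilde T$) and apply the inductive hypothesis to the resulting shape.

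The main obstacle I anticipate is step (2): matching up the \emph{positions} of the cells traversed by a reverse slide on $T$ with those traversed by a forward slide on $\widetilde T$, and verifying that the entry comparisons made during the slides are genuinely reversed by $i\mapsto n+1-i$. This bookkeeping is where one must be careful, since a jeu de taquin slide makes a local choice (``pull in the smaller of the two neighbors'') and one must check that under rotation-plus-complementation ``smaller'' consistently becomes ``larger'' at the mirrored cell. I would handle this by phrasing a single slide abstractly as a monotone lattice-path in the poset of cells and checking the reversal on that path, rather than case-splitting. An alternative, possibly cleaner route — which I would mention as a fallback — is to use the characterization of rectification via \emph{growth diagrams} (or via the plactic/Littlewood--Richardson invariance of $\jdt$): since $\jdt(\widetilde T)$ is independent of the rectification order, compute it by rectifying one antidiagonal at a time and observe that this reproduces exactly the columns of the evacuation growth diagram of $T$. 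In the interest of brevity, since the result is already in the literature, I would likely present the growth-diagram argument in a few lines and refer to~\cite{B94} for the slide-by-slide verification.
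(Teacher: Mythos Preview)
The paper does not supply its own proof of this lemma: it is stated as a citation to \cite[Remark 2.4.24]{B94} and used as a black box, so there is nothing to compare your argument against on the paper's side.

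That said, your outline is the standard one and is sound. The identification of evacuation with the ``delete the smallest entry, slide, record'' process (Sch\"utzenberger's original description) together with the observation that rotating by $180^\circ$ and complementing entries converts reverse slides on $T$ into forward slides on $\widetilde T$ is exactly how this equivalence is usually established. Your worry about step~(2) is the right place to be careful, but the bookkeeping does go through: at each cell the jeu de taquin rule compares two neighbors, and under $i\mapsto n+1-i$ the smaller becomes the larger while rotation swaps the roles of the two neighboring cells, so the slide path is genuinely mirrored. The growth-diagram alternative you mention is also a clean and standard route. Either version would be acceptable as a self-contained proof; for the purposes of this paper, simply citing \cite{B94} as the authors do is already sufficient.
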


Next we define a rotation operation for Dyck paths, which we also call promotion. The reason for this name is that, as observed by White \cite[Sec.\ 8]{RHO10}, applying this operation to a path $D\in\D_n$ is equivalent to applying $\Pro$ to the tableau $\Tab(D)\in\SYT(n,n)$. 
\begin{definition}
Promotion of Dyck paths is the map $\Pro:\D_n\to \D_n$ defined as follows. Given a path $D\in\D_n$, consider its first-return decomposition as $D=\uu A\dd B$ where $A,B$ are Dyck paths, and let $\Pro(D)=A\uu B\dd$. 
\end{definition}
\begin{example}
The promotion of the path $D=\uu\uu\dd\uu\dd\dd\uu\uu\dd\dd$ is
$\Pro(D)=\uu\dd\uu\dd\uu\uu\uu\dd\dd\dd$, as shown in Figure~\ref{fig:Rot}. Note that $\Tab(D)=T$ and $\Tab(\Pro(D))=\Pro(T)$, with $T$ as in Example~\ref{ex:Pro}.
\end{example}

\begin{figure}[htb]
\centering
\begin{tikzpicture}
    \coordinate (a) at (1,0);
\coordinate (b) at (.5,.5);
\filldraw[thick] (0,0)\start\up;
\filldraw[thick](2.5,.5)\start\dn;
\filldraw[thick, color = violet] (.5,.5)\start\up\dn\up\dn;
\filldraw[thick, color = green] (3,0)\start\up\up\dn\dn;
\draw[->] (5.5,.3)--node[above]{$\Pro$} (6,.3);
\begin{scope}[shift={(6.5,0)}]
    \coordinate (a) at (1,0);
\coordinate (b) at (.5,.5);
\filldraw[thick] (2,0)\start\up;
\filldraw[thick](4.5,.5)\start\dn;
\filldraw[thick, color = green] (2.5,.5)\start\up\up\dn\dn;
\filldraw[thick, color = violet] (0,0)\start\up\dn\up\dn;
\end{scope}
\end{tikzpicture}
\caption{Promotion of Dyck paths. The paths $A$ and $B$ are colored in violet and green, respectively.}
\label{fig:Rot}
\end{figure}
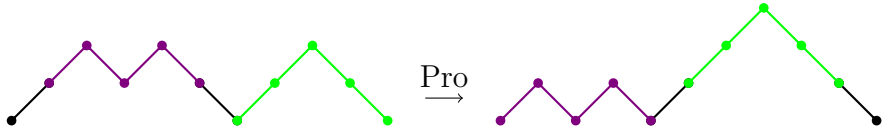

\subsection{Rowmotion}\label{rowmotion}

Let $\P$ be a finite poset. An {\em antichain} of $\P$ is a subset of pairwise incomparable elements. Denote by 
$\cA(P)$ the set of antichains of $\P$.
An {\em order ideal} of $\P$ is a subset $I$ with the property that if $x\in I$ and $y\le x$, then $y\in I$.
Similarly, an {\em order filter} of $\P$ is a subset $F$ with the property that if $x\in F$ and $x\le y$, then $y\in F$. Let $\I(\P)$ and $\F(\P)$ denote the sets of order ideals and order filters of $\P$, respectively.
The complementation map $\Theta$, defined on subsets $S$ of $\P$ by $\Theta(S) = \P\setminus S$, restricts to a bijection between $\I(P)$ and $\F(P)$.

Following~\cite{HJ20}, define the up-transfer map $\Delta:\I(\P)\to \cA(\P)$ by letting $\Delta(I)$ be the set of maximal elements of $I\in\I(P)$, and the down-transfer map $\nabla: \F(\P)\to \cA(\P)$ by letting $\nabla(F)$ be the set of minimal elements of $F\in\F(P)$. The inverses of these bijections are given by 
\begin{align*}\Delta^{-1}(A) &= \{x\in \P : x\le y\text{ for some } y\in A\},\\
\nabla^{-1}(A) & = \{x\in \P : x\ge y\text{ for some } y\in A\},\end{align*} 
for $A\in\cA(P)$. 

Antichain rowmotion is the map $\rA:\cA(\P)\to \cA(\P)$ defined as the composition $\rA = \nabla\circ\Theta\circ\Delta^{-1}$. In words,
for $A\in\cA(\P)$, the antichain $\rA(A)$ consists of the minimal elements of the complement of the order ideal generated by $A$.
Similarly, order ideal rowmotion, denoted by $\rI:\I(\P)\to \I(\P)$, is defined as $\rI = \Delta^{-1}\circ\nabla\circ\Theta$.
Finally, order filter rowmmotion, denoted by $\rF:\F(\P)\to\F(\P)$, is defined as $\rF=\Theta\circ\Delta^{-1}\circ \nabla$. See Figure~\ref{fig:waltz} for examples.

\begin{figure}[h]
    \centering
    \begin{tikzpicture}[scale=.75]
\typeA{4}    
\an{1}{1}\an{2}{4}\an{3}{5}
\draw (2,3.5) node{$\cA(\P)$};
\draw[->] (4.5,1)--node[above,align=center]{$\Delta^{-1}$}(5,1);
\draw[->] (2,-.5)--node[right]{$\rA$}(2,-1.3);
\begin{scope}[shift={(5.5,0)}]
\typeA{4}    
\an{1}{1}\an{2}{4}\an{3}{5}
\el{1}{1}\el{2}{2}\el{3}{3}\el{4}{4}\el{5}{5}
\el{2}{3}\el{3}{4}\el{4}{5}\el{2}{4}\el{3}{5}
\draw (2,3.5) node{$\I(\P)$};
\draw[->] (4.5,1)--node[above]{$\Theta$}(5,1);
\draw[->] (2,-.5)--node[right]{$\rI$}(2,-1.3);
\end{scope}

\begin{scope}[shift={(11,0)}]
\typeA{4}    
\el{1}{2}\el{1}{3}\el{1}{4}\el{1}{5}\el{2}{5}
\draw (2,3.5) node{$\F(\P)$};
\draw[->] (2,-.5)--node[right]{$\rF$}(2,-1.3);
\end{scope}

\begin{scope}[shift={(0,-4.3)}]
\typeA{4}    
\an{1}{2}\an{2}{5}
\draw[->] (4.5-5.5,1)--node[above,align=center]{$\nabla$}(5-5.5,1);
\draw[->] (4.5,1)--node[above,align=center]{$\Delta^{-1}$}(5,1);
\draw[->] (2,-.5)--node[right]{$\rA$}(2,-1.3);
\end{scope}

\begin{scope}[shift={(5.5,-4.3)}]
\typeA{4}    
\an{1}{2}\an{2}{5}
\el{1}{1}\el{2}{2}\el{3}{3}\el{4}{4}\el{5}{5}
\el{2}{3}\el{3}{4}\el{4}{5}\el{2}{4}\el{3}{5}
\draw[->] (4.5,1)--node[above]{$\Theta$}(5,1);
\draw[->] (2,-.5)--node[right]{$\rI$}(2,-1.3);
\end{scope}

\begin{scope}[shift={(11,-4.3)}]
\typeA{4}    
\el{1}{3}\el{1}{4}\el{1}{5}
\draw[->] (2,-.5)--node[right]{$\rF$}(2,-1.3);
\end{scope}

\begin{scope}[shift={(0,-8.6)}]
\typeA{4}    
\an{1}{3}
\draw[->] (4.5-5.5,1)--node[above,align=center]{$\nabla$}(5-5.5,1);
\draw[->] (4.5,1)--node[above,align=center]{$\Delta^{-1}$}(5,1);
\end{scope}

\begin{scope}[shift={(5.5,-8.6)}]
\typeA{4}    
\an{1}{3}
\el{1}{1}\el{2}{2}\el{3}{3}
\el{1}{2}\el{2}{3}
\draw[->] (4.5,1)--node[above]{$\Theta$}(5,1);
\draw (7,1) node {$\cdots$};
\end{scope}
\end{tikzpicture}
    \caption{An example of rowmotion as a composition of the bijections $\Delta$, $\Theta$, and $\nabla^{-1}$.}
    \label{fig:waltz}
\end{figure}
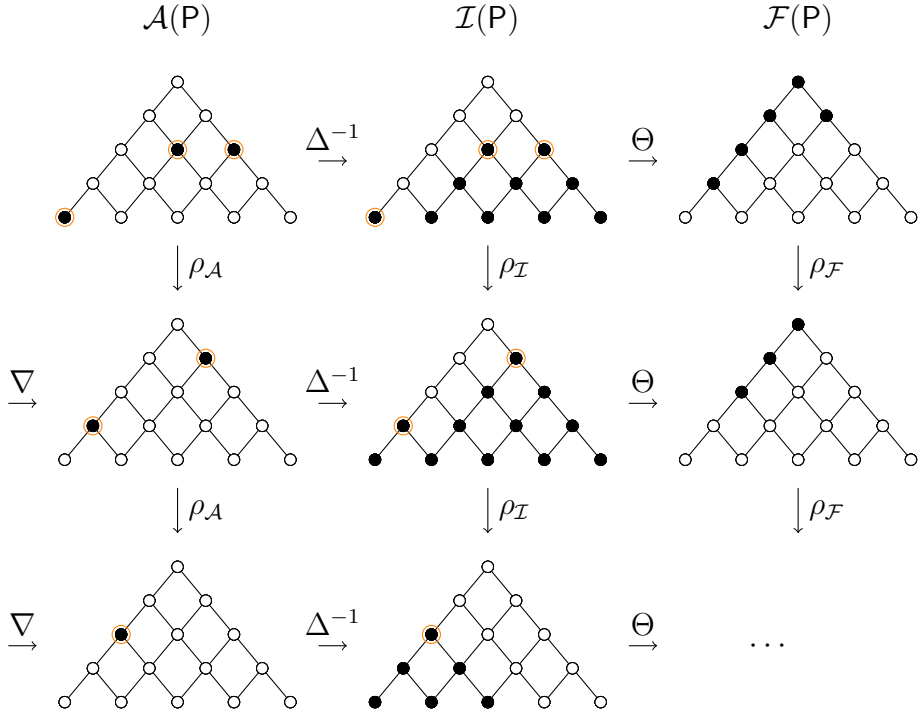

The poset of positive roots in type $A$, which we denote by $\A^{n-1}$, can be described as the set of intervals $\{[i,j]: 1\le i\le j\le n-1\}$ ordered by inclusion. It is a ranked poset, with rank function given by $\rk([i,j])=j-i$. The poset $\A^5$ appears in Figure~\ref{fig:waltz}.
The set $\cA(\A^{n-1})$ is in bijection with $\D_n$. One such bijection consists of mapping each path $D$ to the antichain $\ant(P)$ whose elements are at the valleys of $D$, as illustrated in Figure~\ref{fig:path}. 
This bijection allows us to define rowmotion on Dyck paths as $\rD=\ant^{-1}\circ \rA\circ \ant$. A second bijection between $\cA(\A^{n-1})$ and $\D_n$ consists of mapping the antichain $A$ to the path $\Path(A)$ whose peaks are at the elements of the antichain, as also illustrated in Figure~\ref{fig:path}. Note that $\rA =\ant\circ\Path$ and $\rD = \Path\circ \ant$.

Panyushev \cite{PAN04,PAN09} considered the map on $\cA(\A^{n-1})$ 
defined by mapping an antichain $\{[i_1,j_1],\dots [i_k, j_k]\}$ to $\{[i'_1,j'_1],\dots [i'_{n-k-1},j'_{n-k-1}]\}$ where $\{i'_1,\dots i'_{n-k-1}\}= \{1,2,\dots,n-1\}\setminus \{j_1,\dots, j_k\}$ and $\{j'_1,\dots j'_{n-k-1}\}= \{1,2,\dots,n-1\}\setminus \{i_1,\dots, i_k\}$. As noted in~\cite{HJ20}, this map equals the composition $\ant \circ \LK\circ \ant^{-1}$, so we will denote it by $\LKA$. Hopkins and Joseph show in \cite[Thm.\ 3.5]{HJ20} that a certain map known as \emph{antichain rowvacuation}, denoted in general by $\Rvac_\cA$, coincides with $\LKA$ in the case of the type $A$ root poset.

\begin{figure}[h]
\centering
\begin{tikzpicture}[scale=1]
\typeA{6}
\draw (0,0) node[below,scale=.8] {$[1,1]$};
\draw (1,0) node[below,scale=.8] {$[2,2]$};
\draw (2,0) node[below,scale=.8] {$[3,3]$};
\draw (6,0) node[below,scale=.8] {$[7,7]$};
\draw (.5,.6) node[left,scale=.8] {$[1,2]$};
\draw (1,1.2) node[left,scale=.8] {$[1,3]$};
\draw (3,3.6) node[left,scale=.8] {$[1,7]$};
\draw (1.5,.6) node[left,scale=.8] {$[2,3]$};
\andarkgreen{3}{4}\andarkgreen{6}{6} 
\an{1}{3}\an{4}{5}\an{7}{7}
\filldraw[red] ($-.5*(a)-(b)$)\start\up\up\up\up\dn\dn\dn\up\up\dn\dn\dn\up\up\dn\dn;
\end{tikzpicture}
    \caption{The antichain $A = \{[1,3],[4,5],[7,7]\}$ (orange) in the poset $\A^7$, the Dyck path $D=\Path(A)$ (red), and the antichain $\ant(D)=\{[3,4],[6,6]\}$ (green).}
    \label{fig:path}
\end{figure}
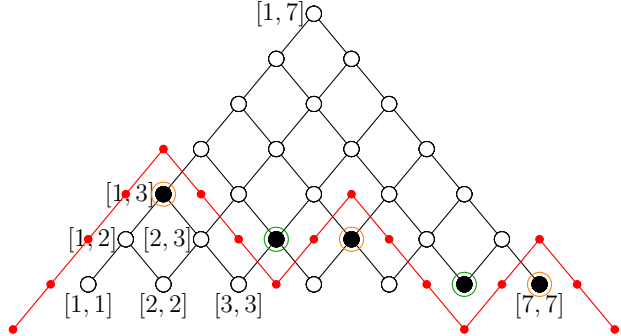

\section{A rowmotion operation on $321$-avoiding permutations}\label{defs}

To define rowmotion on $321$-avoiding permutations, first consider the following bijection to antichains of $\A^{n-1}$.

\begin{definition}\label{def:Exc}
Let $\Exc:\S_n(321)\to \cA(\A^{n-1})$ be the bijection where, for $\pi\in\S_n(321)$, we define
$$
\Exc(\pi)=\{[i,\pi(i)-1]\,:\,(i,\pi(i)) \text{ is an excedance of } \pi\}.
$$
\end{definition}

See Figure~\ref{fig:Exc} for an example. To see that this is a bijection, note that it can be written as $\Exc=\Path^{-1}\circ E_p=\ant\circ E_v$.

\begin{figure}[h]
    \centering
    \begin{tikzpicture}[scale=0.5]
 \draw (0,0) grid (9,9);
 \draw[dotted] (0,0)--(9,9);
 \perm{{2,4,1,3,5,8,9,6,7}}
 \crosscolor{1}{2}{orange}\crosscolor{2}{4}{orange}\crosscolor{6}{8}{orange}\crosscolor{7}{9}{orange}
\begin{scope}[shift={(.5,1.5)}]
\typeAtilted{7}
\an{1}{1}\an{2}{3}\an{6}{7}\an{7}{8}
\end{scope}
\end{tikzpicture}
\begin{tikzpicture}[scale=0.6]
\draw[->] (-1,2.3)--node[above]{$\Exc$} (-.2,2.3);
\typeA{7}
\an{1}{1}\an{2}{3}\an{6}{7}\an{7}{8}
\end{tikzpicture}  
    \caption{An example of the bijection $\Exc:\S_9(321)\to \cA(\A^{8})$, which maps the permutation $241358967$ to the antichain $\{[1,1],[2,3],[6,7],[7,8]\}$.}
    \label{fig:Exc}
\end{figure}
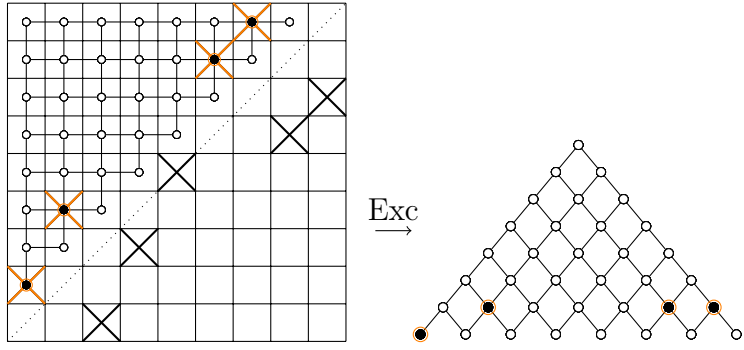

The map $\Exc$ provides the following natural way to translate rowmotion into an operation on $\S_n(321)$; see Figure~\ref{fig:rS_example} for an example.

\begin{definition}\label{def:rowmotion}
Rowmotion on $321$-avoiding permutations is the map $\rS:\S_n(321)\to \S_n(321)$ defined by $\rS=\Exc^{-1}\circ\rA\circ \Exc$.
\end{definition}

\begin{figure}[h]
\centering
\begin{tikzpicture}[scale=0.5]
 \draw (0,0) grid (9,9);
 \draw[dotted] (0,0)--(9,9);
 \perm{{2,4,1,3,5,8,9,6,7}}
 \draw (4.5,10) node {$\ora{24}135\ora{89}67$};
  \crosscolor{1}{2}{orange}\crosscolor{2}{4}{orange}\crosscolor{6}{8}{orange}\crosscolor{7}{9}{orange}
\draw[->] (9.5,4.5)-- node[above] {$\rS$} (10.5,4.5);
  \draw[->] (4.5,-.5)-- node[right] {$\Exc$} (4.5,-1.5);
  \draw[->] (9.5,4.5-9)-- node[above] {$\rA$} (10.5,4.5-9);
 \begin{scope}[shift={(0,-8)},scale=1.3]
 \typeA{7}
\an{1}{1}\an{2}{3}\an{6}{7}\an{7}{8}
  \end{scope} 
\begin{scope}[shift={(11,0)}]
  \draw (0,0) grid (9,9);
 \draw[dotted] (0,0)--(9,9);
 \perm{{3,1,2,5,6,9,4,7,8}}
 \draw (4.5,10) node {$\ora{3}12\ora{569}478$};
  \crosscolor{1}{3}{orange}\crosscolor{4}{5}{orange}\crosscolor{5}{6}{orange}\crosscolor{6}{9}{orange}
 \draw[->] (9.5,4.5)-- node[above] {$\rS$} (10.5,4.5);
   \draw[->] (9.5,4.5-9)-- node[above] {$\rA$} (10.5,4.5-9);
  \draw[->] (4.5,-.5)-- node[right] {$\Exc$} (4.5,-1.5);
\end{scope}
 \begin{scope}[shift={(11,-8)},scale=1.3]
 \typeA{7}
\an{1}{2}\an{4}{4}\an{5}{5}\an{6}{8}
  \end{scope}
\begin{scope}[shift={(22,0)}]
 \draw (0,0) grid (9,9);
 \draw[dotted] (0,0)--(9,9);
 \perm{{1,2,4,6,7,3,5,8,9}}
 \draw (4.5,10) node {$12\ora{467}3589$};
   \crosscolor{3}{4}{orange}\crosscolor{4}{6}{orange}\crosscolor{5}{7}{orange}
  \draw[->] (9.5,4.5)-- node[above] {$\rS$} (10.5,4.5);
   \draw[->] (4.5,-.5)-- node[right] {$\Exc$} (4.5,-1.5);
     \draw[->] (9.5,4.5-9)-- node[above] {$\rA$} (10.5,4.5-9);
\end{scope}
 \begin{scope}[shift={(22,-8)},scale=1.3]
 \typeA{7}
\an{3}{3}\an{4}{5}\an{5}{6}
  \end{scope}
\end{tikzpicture}
\caption{Two applications of rowmotion starting at $\pi=241358967\in\S_9(321)$, computed using Definition~\ref{def:rowmotion}.
}
\label{fig:rS_example}
\end{figure}
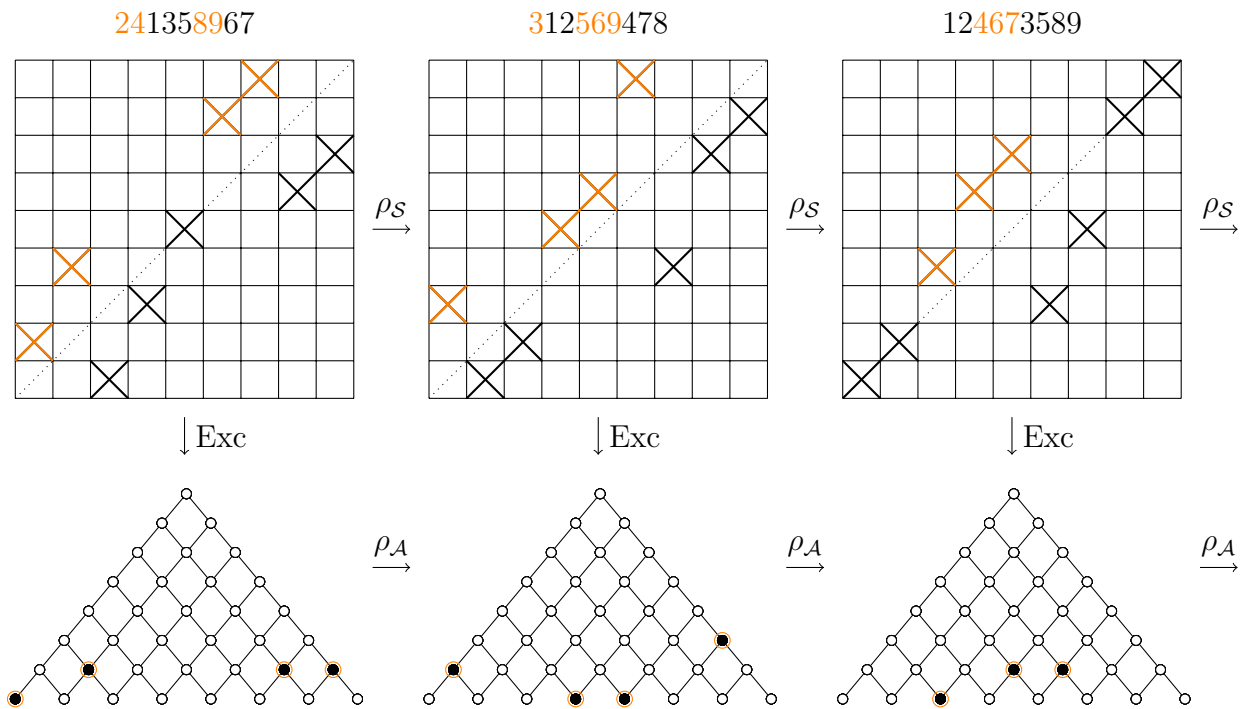

An equivalent description of $\rS$ can be given in terms of the above bijections to Dyck paths, as the composition
\begin{equation}\label{eq:rS_paths}
    \rS=E_v^{-1}\circ E_p.
\end{equation}
See Figure~\ref{fig:rS_example_paths} for examples of $\rS$ computed in this way.
Note that if $\pi\in\S_n(321)$ has upper and lower paths given by $P=E_p(\pi)$ and $Q=D_v(\pi)$, then $\sigma=\rS(\pi)$ has upper and lower paths given by $\rD(P)=E_p(\sigma)$ and $\rD^{-1}(Q)=D_v(\sigma)$. Equivalently, at the level of antichains, we have $\Exc(\sigma)=\rA(\Exc(\pi))$ and $\Exc(\sigma^{-1})=\rA^{-1}(\Exc(\pi^{-1}))$, noting that
$\Exc(\pi^{-1})$ is the antichain formed by the deficiencies of $\pi$. This follows from the fact that 
\begin{equation}\label{eq:Pan}\LKA\circ\rA=\rA^{-1}\circ\LKA,\end{equation}
which was proved by Panyushev \cite[Thm.\ 3.5]{PAN09}.

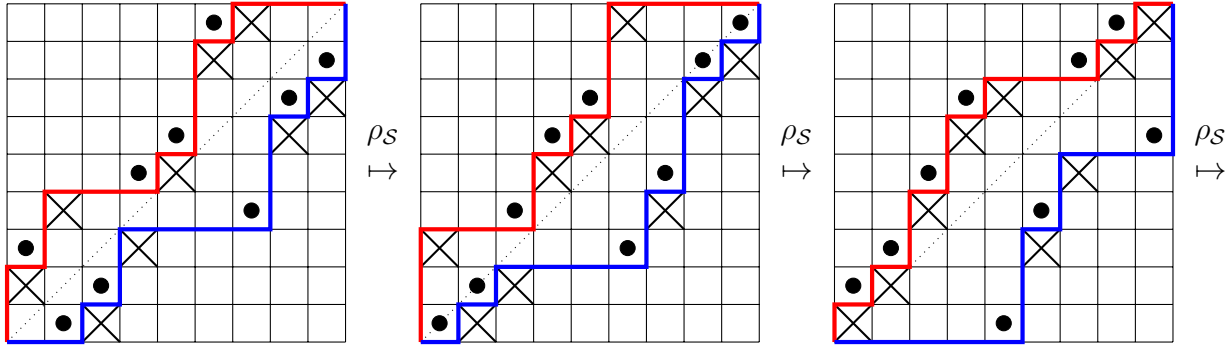
\begin{figure}[h]
\centering
\begin{tikzpicture}[scale=0.5]
 \draw (0,0) grid (9,9);
 \draw[dotted] (0,0)--(9,9);
 \perm{{2,4,1,3,5,8,9,6,7}}
 \draw[red,ultra thick] (0,0) \N\N\E\N\N\E\E\E\N\E\N\N\N\E\N\E\E\E;
 \draw[blue,ultra thick] (0,0) \E\E\N\E\N\N\E\E\E\E\N\N\N\E\N\E\N\N;
 \permc{{3,1,2,5,6,9,4,7,8}}
 \draw (10,4.5) node[label=$\rS$] {$\mapsto$};
\begin{scope}[shift={(11,0)}]
 \draw (0,0) grid (9,9);
 \draw[dotted] (0,0)--(9,9);
 \perm{{3,1,2,5,6,9,4,7,8}}
 \draw[red,ultra thick] (0,0) \N\N\N\E\E\E\N\N\E\N\E\N\N\N\E\E\E\E;
 \draw[blue,ultra thick] (0,0) \E\N\E\N\E\E\E\E\N\N\E\N\N\N\E\N\E\N;
 \permc{{1,2,4,6,7,3,5,8,9}}
 \draw (10,4.5) node[label=$\rS$] {$\mapsto$};
\end{scope}
\begin{scope}[shift={(22,0)}]
 \draw (0,0) grid (9,9);
 \draw[dotted] (0,0)--(9,9);
 \perm{{1,2,4,6,7,3,5,8,9}}
 \draw[red,ultra thick] (0,0) \N\E\N\E\N\N\E\N\N\E\N\E\E\E\N\E\N\E;
 \draw[blue,ultra thick] (0,0) \E\E\E\E\E\N\N\N\E\N\N\E\E\E\N\N\N\N;
 \permc{{2,3,5,7,1,4,8,9,6}}
 \draw (10,4.5) node[label=$\rS$] {$\mapsto$};
\end{scope}
\end{tikzpicture}
\caption{Rowmotion starting at $\pi=241358967\in\S_9(321)$, computed using the composition~\eqref{eq:rS_paths}. In the left diagram, the crosses represent $\pi$, the red path is $E_p(\pi)$, the blue path is $D_v(\pi)=\LK(E_p(\pi))$, and the dots represent $\rS(\pi)=E_v^{-1}(E_p(\pi))=312569478$.
}
\label{fig:rS_example_paths}
\end{figure}

A diagram of our bijections for permutations, paths and antichains, as well as their interactions, appears in Figure~\ref{fig:bijections}. 
We can translate the Lalanne--Kreweras involution for Dyck paths into an involution on $\S_n(321)$ by defining 
\begin{equation}\label{def:LKS}\LKS= E_p^{-1}\circ\LK\circ E_p.\end{equation}
Using that $\LKA=\ant \circ \LK\circ \ant^{-1}$, the maps $\LKS$ and $\LKA$ are related by
\begin{equation}\label{eq:LKS_LKA}\LKS= E_p^{-1}\circ\ant^{-1}\circ\LKA\circ\ant\circ E_p.\end{equation}

\begin{figure}[htb]
\centering
\begin{tikzpicture}[scale=1.7]
\draw node (S0) at (0,0) {$\S_n(321)$};
\draw node (S1) at (2,0) {$\S_n(321)$};
\draw node (D0) at (.7,1) {$\D_n$};
\draw node (D1) at (2.7,1) {$\D_n$};
\draw node (A0) at (1.4,2) {$\cA(\A^{n-1})$};
\draw node (A1) at (3.4,2) {$\cA(\A^{n-1})$};

\draw[gray] node (S0L) at (0,-3) {$\S_n(321)$};
\draw[gray] node (S1L) at (2,-3) {$\S_n(321)$};
\draw[gray] node (D0L) at (.7,-2) {$\D_n$};
\draw[gray] node (D1L) at (2.7,-2) {$\D_n$};
\draw[gray] node (A0L) at (1.4,-1) {$\cA(\A^{n-1})$};
\draw[gray] node (A1L) at (3.4,-1) {$\cA(\A^{n-1})$};
\draw[<->,dotted,very thick,green!50!black] (S0L) to [bend right=20] (S1);

\draw[<->,dashed,purple] (D0) --  (D0L);
\draw[<->,dashed,purple] (D1) -- node[right]{$\LK$} (D1L);
\draw[<->,dashed,purple] (S0) --  (S0L);
\draw[<->,dashed,purple] (S1) -- node[right]{$\LKS$} (S1L);
\draw[<->,dashed,purple] (A0) --  (A0L);
\draw[<->,dashed,purple] (A1) -- node[right]{$\LKA$} (A1L);
\draw[->] (S0)-- node[below left=-2mm]{$D_v$} (D0L);

\draw[->] (D0) -- node[above right]{$\rD$} (D1);
\draw[->] (S0) -- node[above]{$\rS$} (S1);
\draw[->] (A0) -- node[above]{$\rA$} (A1);
\draw[->] (S0) -- node[left]{$E_p$} (D0);
\draw[->] (S1) -- node[right]{$E_p$} (D1);
\draw[->] (S1) -- node[left]{$E_v$} (D0);
\draw[->] (D0) -- node[left]{$\ant$} (A0);
\draw[->] (D1) -- node[left]{$\ant$} (A1);
\draw[->,brown] (S1) -- node[below right]{$\Exc$} (A0);
\draw[->] (A0) -- node[above right]{$\Path$} (D1);

\draw[->,gray] (D1L) -- node[above]{$\rD$} (D0L);
\draw[->,gray] (S1L) -- node[above]{$\rS$} (S0L);
\draw[->,gray] (A1L) -- node[above]{$\rA$} (A0L);
\draw[->,gray] (S0L) -- node[left]{$E_p$} (D0L);
\draw[->,gray] (S1L) -- node[right]{$E_p$} (D1L);
\draw[->,gray] (D0L) -- node[left]{$\ant$} (A0L);
\draw[->,gray] (D1L) -- node[left]{$\ant$} (A1L);
\end{tikzpicture}

\caption{Diagram of the bijections $\rS$, $\rD$, $\rA$, $\ant$, $\Path$, $E_p$, $E_v$, $D_v$, $\LKS$, $\LKA$, and $\LK$. The vertical dashed arrows are the various versions of the Lalanne--Kreweras involution, and the dotted curved arrow is the map sending a permutation to its inverse.}
\label{fig:bijections}
\end{figure}
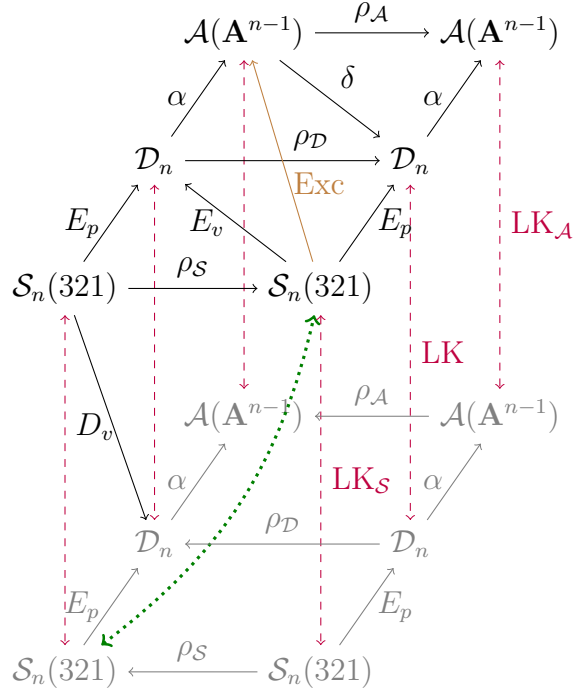

The map $\LKS$ is closely related to the inversion map on permutations, as the next lemma shows.

\begin{lemma}\label{LK_of_Perm}
	If $\pi\in \S_n(321)$ then $\pi^{-1} = \rho_{\S}(\LKS(\pi))=\LKS(\rho_{\S}^{-1}(\pi))$.
\end{lemma}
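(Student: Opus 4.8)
The plan is to work at the level of Dyck paths via the bijections $E_p$, $E_v$, and $D_v$, since the statement becomes a clean identity about the Lalanne--Kreweras involution there. Recall from \eqref{def:LKS} that $\LKS = E_p^{-1}\circ\LK\circ E_p$, from \eqref{eq:rS_paths} that $\rS = E_v^{-1}\circ E_p$, and from \eqref{def:LK} that $\LK = D_v\circ E_p^{-1}$. The two claimed equalities are equivalent after applying $\rS$; I will focus on proving $\pi^{-1} = \rS(\LKS(\pi))$, and the second form $\pi^{-1}=\LKS(\rS^{-1}(\pi))$ follows by replacing $\pi$ with $\rS(\pi)$ and using that $\rS$ is a bijection (equivalently, one can derive it symmetrically).

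First I would unwind the right-hand side: $\rS(\LKS(\pi)) = E_v^{-1}\bigl(E_p(\LKS(\pi))\bigr) = E_v^{-1}\bigl(\LK(E_p(\pi))\bigr) = E_v^{-1}\bigl(D_v(E_p^{-1}(E_p(\pi)))\bigr) = E_v^{-1}(D_v(\pi))$. So the statement reduces to the single identity
\begin{equation*}
\pi^{-1} = E_v^{-1}\bigl(D_v(\pi)\bigr), \qquad\text{equivalently}\qquad E_v(\pi^{-1}) = D_v(\pi).
\end{equation*}
This is now a concrete claim relating two of the three path bijections from \cite{Eli}: the path $E_v(\pi^{-1})\in\Dul_n$ has its valleys at the excedances of $\pi^{-1}$, and the path $D_v(\pi)\in\Dlr_n$ has its valleys at the weak deficiencies of $\pi$. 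The key combinatorial fact to establish is that, after the reflection identifying $\Dul_n$ with $\Dlr_n$ along $y=x$, these two paths coincide. Concretely, $(i,\pi(i))$ is a weak deficiency of $\pi$ (i.e.\ $\pi(i)\le i$) if and only if, writing $j=\pi(i)$ so that $\pi^{-1}(j)=i\ge j$, the pair $(j,\pi^{-1}(j))$ is a weak excedance of $\pi^{-1}$; one then checks that the valley positions match up correctly under the reflection. Here it is cleanest to use the known relation $D_v(\pi)=E_p(\pi^{-1})$ reflected, or to invoke Lemma~\ref{lem:RSKDsym}-style reasoning: since $\pi\in\S_n(321)\iff\pi^{-1}\in\S_n(321)$ and the weak excedances of $\pi^{-1}$ are exactly the transposes of the weak deficiencies of $\pi$, the path $E_p(\pi^{-1})$ read in the $\Dul$ convention is the reflection of $D_v(\pi)$ read in the $\Dlr$ convention. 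Combining with $\LK = D_v\circ E_p^{-1}$ gives $\LK(E_p(\pi^{-1}))$ = reflection of $D_v(\pi^{-1})$, and a short bookkeeping argument relating $E_v$ to $E_p$ under inversion closes the gap.

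The main obstacle I anticipate is purely the bookkeeping: carefully tracking which of the three conventions ($\Dul$ versus $\Dlr$, peaks versus valleys, excedances versus weak excedances versus deficiencies) applies to each path, and verifying that the valley/peak positions transform correctly under the reflection $y=x$ together with the index reversal $i\mapsto n+1-i$ that inversion induces on $321$-avoiding permutations. Once the single identity $E_v(\pi^{-1}) = D_v(\pi)$ is pinned down — which is essentially a restatement of the compatibility of the bijections in \cite{Eli} with taking inverses — the lemma follows immediately. I would either cite the relevant statement from \cite{Eli} directly or include the two-line check comparing valley sets. For the second equality, applying $\rS^{-1}$ to both sides of $\pi^{-1}=\rS(\LKS(\pi))$ and relabeling, together with the fact that $\LKS$ is an involution, yields $\LKS(\rS^{-1}(\pi)) = \pi^{-1}$ as claimed; alternatively it follows by the same path computation run with $D_v$ and $E_v$ interchanged via $\LK$ being an involution.
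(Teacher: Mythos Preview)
Your approach is correct and essentially identical to the paper's: both unwind $\rS(\LKS(\pi))$ via \eqref{eq:rS_paths}, \eqref{def:LKS}, and \eqref{def:LK} to reach $E_v^{-1}(D_v(\pi))$, then identify this with $\pi^{-1}$ using the fact that deficiencies of $\pi$ transpose to excedances of $\pi^{-1}$, and finally derive the second equality from the first using that $\LKS$ is an involution. The paper's proof is simply the terse version of what you wrote; your ``main obstacle'' paragraph about bookkeeping is unnecessary once you commit to the one-line justification $E_v^{-1}(D_v(\pi))=\pi^{-1}$.
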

\begin{proof}
Using equations \eqref{eq:rS_paths}, \eqref{def:LKS} and~\eqref{def:LK}, in this order, we obtain $$\rho_{\S}(\LKS(\pi))=E_v^{-1}(E_p(\LKS(\pi))=E_v^{-1}(\LK( E_p(\pi)))=E_v^{-1}(D_v(\pi))=\pi^{-1},$$
where the last equality follows from the fact that $(i,j)$ is a deficiency of $\pi$ if and only if $(j,i)$ is an excendance of $\pi^{-1}$.
The same equality, with $\pi^{-1}$ playing the role of $\pi$, states that $\rho_{\S}(\LKS(\pi^{-1}))=\pi$. Consequently $\pi^{-1}=\LKS^{-1}(\rho_{\S}^{-1}(\pi)) =  \LKS(\rS^{-1}(\pi))$, using that $\LKS$ is an involution, which follows from the fact $\LK$ is an involution as well.
\end{proof}

Noting that  $\ant \circ E_p=\Exc \circ \rS$, we can rewrite Equation~\eqref{eq:LKS_LKA} as
\[ \rS\circ \LKS\circ \rS^{-1}= \Exc^{-1}\circ \LKA\circ \Exc. \]
Composing on the right with $\rS=\Exc^{-1}\circ\rA\circ\Exc$ (see Definition~\ref{def:rowmotion}), and using Lemma~\ref{LK_of_Perm}, we get
\begin{equation}\label{eq:LKS_LKA_EXC}
\LKS\circ \rS^{-1}=\rS\circ \LKS= \Exc^{-1}\circ \LKA\circ \rA \circ \Exc.
\end{equation}

In~\cite[Thm.\ 6.2]{HJ20}, Hopkins and Joseph enumerate antichains of $\A^{n-1}$ that are fixed by the involution $\LKA\circ\rA$. Next we show that the concept of rowmotion on $321$-avoiding permutations provides a simpler proof of this theorem, by reducing it to a classical result of Simion and Schmidt on the enumeration of pattern-avoiding involutions~\cite{SS85}.

\begin{theorem}[{\cite[Thm.\ 6.2]{HJ20}}]\label{thm:FixedOfLKA}
\[|\{A\in \cA(\mathbf{A}^{n-1}): \LKA(\rA(A))= A\}|=\binom{n}{\lfloor\frac{n}{2}\rfloor}\]
\end{theorem}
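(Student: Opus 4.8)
The plan is to transport the fixed-point condition from antichains to $321$-avoiding permutations, where it should become a recognizable pattern-avoidance condition, and then invoke a known enumeration. By Definition~\ref{def:rowmotion} and equation~\eqref{eq:LKS_LKA_EXC}, for $\pi\in\S_n(321)$ with $A=\Exc(\pi)$ we have $\LKA(\rA(A))=A$ if and only if $\Exc^{-1}(\LKA(\rA(\Exc(\pi))))=\pi$, i.e.\ if and only if $(\LKS\circ\rS)(\pi)=\pi$, equivalently $\LKS(\pi)=\rS^{-1}(\pi)$. By Lemma~\ref{LK_of_Perm} we have $\LKS(\rS^{-1}(\pi))=\pi^{-1}$, so the condition $\LKS\circ\rS^{-1}=\mathrm{id}$ composed appropriately says that $\LKS(\pi)=\rS^{-1}(\pi)$ is equivalent to $\pi^{-1}=\pi$, i.e.\ $\pi$ is an involution. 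Let me re-derive this carefully: the theorem's fixed set is $\{A : \LKA(\rA(A))=A\}$; pulling back through the bijection $\Exc$, this is the image under $\Exc$ of $\{\pi\in\S_n(321) : \Exc^{-1}(\LKA(\rA(\Exc(\pi))))=\pi\}$. Using the right-hand equality of~\eqref{eq:LKS_LKA_EXC}, $\Exc^{-1}\circ\LKA\circ\rA\circ\Exc = \rS\circ\LKS$, so the condition becomes $\rS(\LKS(\pi))=\pi$, i.e.\ $\LKS(\pi)=\rS^{-1}(\pi)$. By Lemma~\ref{LK_of_Perm}, $\rS^{-1}(\pi)=\LKS^{-1}(\pi^{-1})=\LKS(\pi^{-1})$ (since $\LKS$ is an involution), so $\LKS(\pi)=\LKS(\pi^{-1})$, and since $\LKS$ is a bijection this holds iff $\pi=\pi^{-1}$. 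Therefore the fixed set is in bijection, via $\Exc$, with the set of involutions in $\S_n(321)$.

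It then remains to count involutions in $\S_n(321)$. This is the classical result of Simion and Schmidt~\cite{SS85}: the number of involutions in $\S_n$ avoiding the pattern $321$ is $\binom{n}{\lfloor n/2\rfloor}$. One can cite this directly, or alternatively give a one-line combinatorial proof using the bijection already in hand: by the remark following Lemma~\ref{lem:RSKDsym}, or more directly by the fact that a $321$-avoiding involution is determined by its set of excedances together with the matching condition $\pi(\pi(i))=i$, a $321$-avoiding involution corresponds to a noncrossing partial matching on $\{1,\dots,n\}$ (the excedance pairs $(i,\pi(i))$), which gives the Motzkin-like count $\binom{n}{\lfloor n/2\rfloor}$; but invoking~\cite{SS85} is cleanest.

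The main obstacle is purely bookkeeping: making sure the composition $\Exc^{-1}\circ\LKA\circ\rA\circ\Exc$ is correctly identified with $\rS\circ\LKS$ (not its inverse), so that the fixed-point equation translates to $\pi=\pi^{-1}$ rather than to something vacuous or to a different symmetry class. This is exactly the content of~\eqref{eq:LKS_LKA_EXC} together with Lemma~\ref{LK_of_Perm}, both already established, so once the direction is pinned down the argument is immediate. I would present the proof in two short steps: first the chain of equivalences reducing the fixed set to $321$-avoiding involutions, then the citation of~\cite{SS85}.
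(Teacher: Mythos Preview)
Your proposal is correct and follows essentially the same approach as the paper: use \eqref{eq:LKS_LKA_EXC} to transport the fixed-point condition for $\LKA\circ\rA$ through $\Exc$ to the equivalent condition $\rS\circ\LKS(\pi)=\pi$ (the paper uses the equal map $\LKS\circ\rS^{-1}$), then apply Lemma~\ref{LK_of_Perm} to identify the fixed set with $321$-avoiding involutions, and cite Simion--Schmidt~\cite{SS85} for the count. Your extra unpacking of the bookkeeping is fine but not needed---the paper dispatches the conjugacy and the identification with involutions in one sentence each.
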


\begin{proof}
By Equation~\eqref{eq:LKS_LKA_EXC}, $\LKA\circ \rA$ is conjugate to $\LKS\circ \rS^{-1}$, so they have the same number of fixed points. By Lemma \ref{LK_of_Perm}, the number of fixed points of $\LKS\circ \rS^{-1}$ is the number of permutations $\pi\in\S_n(321)$ such that $\pi=\pi^{-1}$, i.e., the number of $321$-avoiding involutions. 
It is a classical result of Simion and Schmidt \cite{SS85} that this number equals the central binomial coefficient. 

\end{proof}

\section{Statistics and homomesies}\label{homomesies}

In this section we  show that certain statistics on $321$-avoiding permutations exhibit homomesy, as defined in Section~\ref{sec:intro}, under the action of $\rS$. 

\subsection{The number of fixed points}
The first statistic that we consider is the number of fixed points of a permutation $\pi$, denoted by $\fp(\pi)= |\{i : \pi(i) = i\}|$. It is interesting to note that, despite being a common statistic on permutations, the statistic on antichains that is obtained by translating it via the bijection $\Exc$ is far less natural, which explains why it has not been studied in the literature on antichain rowmotion.

Let $\exc(\pi)=|\{i:\pi(i)>i\}|$ and $\wexc(\pi)=|\{i:\pi(i)\ge i\}|$ denote the number of excedances and weak excedances of $\pi$, respectively. 

\begin{theorem}\label{thm:fp}
The statistic $\fp$ is 1-mesic under the action of $\rS$ on $\S_n(321)$.
\end{theorem}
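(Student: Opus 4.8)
The plan is to show that $\fp$ is $1$-mesic by exhibiting, for each orbit of $\rS$, a bijective pairing between the fixed points contributed across the orbit and the steps of the orbit. The cleanest route goes through the Dyck-path picture: via $\Exc$ and the identification $\rA=\ant\circ\Path$, an orbit of $\rS$ on $\S_n(321)$ corresponds to an orbit of $\rD=\Path\circ\ant$ on $\D_n$, and by White's observation (Section~\ref{sec:promotion}) $\rD$ is conjugate to promotion $\Pro$ on $\D_n$, hence to promotion on $\SYT(n,n)$. So it suffices to understand how $\fp$ transports to $\D_n$ and then to track its total over a promotion orbit. First I would work out exactly which path statistic $\fp(\pi)$ becomes under $\Exc$ (equivalently under $E_p$ or $E_v$): a fixed point $\pi(i)=i$ is the borderline case between an excedance and a deficiency, and in terms of the paths $E_p(\pi)$ (peaks at weak excedances) and $E_v(\pi)$ (valleys at excedances), a fixed point $i$ is a position that is a weak excedance but not an excedance; translating through $\ant$ and $\Path$, I expect this to count something like the number of ``diagonal'' contacts or a specific type of peak/valley in the path, or equivalently, using the centered-tunnel terminology of Section~\ref{sec:Dyck}, the number of centered tunnels of a suitable path (since $\LK(E_p(\pi))=D_v(\pi)$ and fixed points correspond to steps symmetric about the middle).

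Once $\fp$ is identified with an explicit path statistic $f$, the key step is to prove that $\sum_{k} f(\rD^k(D))$, summed over one full orbit, equals the orbit length. Equivalently, after conjugating to promotion, I would show the relevant statistic on $\SYT(n,n)$ sums to the orbit size over each $\Pro$-orbit. This is the kind of statement amenable to the standard toggle/orbit-counting technique for homomesy: write $f$ as a signed sum of ``toggleability'' statistics (indicator functions $\one_{c\in D}$ for cells $c$, or peak/valley indicators), and show each such indicator, summed over an orbit, is independent of the orbit in a way that makes the signed combination collapse to the constant $1$. Concretely, I would look for a telescoping identity: find functions $g$ on $\D_n$ (or on antichains of $\A^{n-1}$) with $f(D) = g(D) - g(\rD(D)) + 1$, or a more refined version $f(D) = (\text{something that telescopes}) + 1$; summing over the orbit kills the telescoping part and leaves $\sum f = (\text{orbit length})$, i.e.\ average $1$.

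The main obstacle I expect is pinning down the telescoping (or toggle-decomposition) identity for the fixed-point statistic: $\fp$ is, as the authors themselves note, an unnatural statistic on antichains, so the identity $f = g - g\circ\rD + 1$ will not be obvious and may require a genuinely clever choice of $g$. One promising handle: Lemma~\ref{LK_of_Perm} gives $\pi^{-1}=\rS(\LKS(\pi))$, and since $\pi(i)=i$ iff $\pi^{-1}(i)=i$, the fixed-point set is preserved by $\pi\mapsto\pi^{-1}$ and hence by $\rS\circ\LKS$; combined with the fact that $\LKS$ (being $\LK$ on paths) reverses/reflects paths, this symmetry should constrain $f$ enough to guess $g$. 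Alternatively, if $f$ turns out to be the number of centered tunnels or fixed points of an associated involution, one can invoke the cyclic-sieving / orbit structure of promotion on $\SYT(n,n)$ directly: promotion on rectangular tableaux is well understood, its orbit structure on $\SYT(n,n)$ is known, and the count of ``central'' objects along an orbit can be read off from that structure. I would first try the toggle-telescoping approach at the level of Dyck paths, and fall back on the explicit promotion-orbit description of $\SYT(n,n)$ (together with Lemma~\ref{lem:EvacDescription} relating evacuation to $\LK$) if a clean $g$ does not present itself.
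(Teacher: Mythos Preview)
Your plan is a reasonable outline of how one \emph{might} attack a homomesy statement, but it is far more involved than necessary and leaves the hardest step---finding the telescoping function $g$, or the right toggle decomposition---entirely unresolved. The paper's proof sidesteps all of this with a one-line decomposition you did not consider:
\[
\fp(\pi) \;=\; \wexc(\pi) - \exc(\pi) \;=\; n - \exc(\pi^{-1}) - \exc(\pi) \;=\; n - |\Exc(\pi)| - |\Exc(\pi^{-1})|.
\]
As $\pi$ ranges over a $\rS$-orbit, $\Exc(\pi)$ traces out a full $\rA$-orbit by the definition of $\rS$, and $\Exc(\pi^{-1})$ traces out a full $\rA$-orbit as well, since applying $\rS$ to $\pi$ applies $\rA^{-1}$ to $\Exc(\pi^{-1})$ (this is the content of Equation~\eqref{eq:Pan} and the surrounding discussion). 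Now one invokes the known result from~\cite{AST13} that antichain cardinality on $\cA(\A^{n-1})$ is $\frac{n-1}{2}$-mesic under $\rA$, and averages: $n - \frac{n-1}{2} - \frac{n-1}{2} = 1$. No Dyck paths, no promotion, no centered tunnels, no search for a telescoping $g$.

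Your instinct that the symmetry $\fp(\pi)=\fp(\pi^{-1})$ should help is on the right track, but the decisive move is to express $\fp$ \emph{additively} via the excedance counts of $\pi$ and $\pi^{-1}$ simultaneously, reducing to two instances of an already-established homomesy rather than proving a new one from scratch. One further correction: your claim that ``White's observation (Section~\ref{sec:promotion})'' gives the conjugacy of $\rD$ with $\Pro$ is not supported by that section, which only identifies Dyck-path promotion with tableau promotion via $\Tab$; the link between rowmotion and promotion on these objects is essentially the equivariance of $\AST$, established only later in Section~\ref{sec:AST}.
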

 
\begin{proof}
Let $\pi\in\S_n(321)$, and let $\mathcal{O}$ be the orbit of $\pi$ under $\rS$. Then
$$\fp(\pi)=\wexc(\pi)-\exc(\pi)=n-\exc(\pi^{-1})-\exc(\pi)=n-\card{\Exc(\pi)}-\card{\Exc(\pi^{-1})}.$$
Summing over the orbit, $$\frac{1}{|\mathcal{O}|}\sum_{\pi\in \mathcal{O}}\fp(\pi) = n-\frac{1}{|\mathcal{O}|}\sum_{\pi\in \mathcal{O}}\card{\Exc(\pi)}-\frac{1}{|\mathcal{O}|}\sum_{\pi\in \mathcal{O}}\card{\Exc(\pi^{-1})}.$$
As noted above Equation~\eqref{eq:Pan}, applying $\rS$ to $\pi$ corresponds to applying $\rA$ to $\Exc(\pi)$ and $\rA^{-1}$ to $\Exc(\pi^{-1})$. Thus, by Lemma \ref{LK_of_Perm}, the sets $\{\Exc(\pi) : \pi\in \mathcal{O}\}$ and $\{\Exc(\pi^{-1}): \pi\in \mathcal{O}\}$ are complete orbits under $\rA$.
It is known~\cite{AST13} that the antichain cardinality statistic is $\frac{n-1}{2}$-mesic under the action of $\rA$ on $\cA(\A^{n-1})$.
Thus $\frac{1}{|\mathcal{O}|}\sum_{\pi\in \mathcal{O}}\card{\Exc(\pi)}=\frac{1}{|\mathcal{O}|}\sum_{\pi\in \mathcal{O}}\card{\Exc(\pi^{-1})}=\frac{n-1}{2}$, and we conclude that $\fp$ is $1$-mesic. 
\end{proof}

\subsection{The statistics $h_i$ and $\ell_i$}
Next we consider two families of statistics on $321$-avoiding permutations, and show that they are also homomesic under rowmotion. As a consequence, we obtain another proof of Theorem~\ref{thm:fp}.
The first family are the statistics $h_i$ introduced by  Hopkins and Joseph \cite{HJ20}. For $1\le i\le n-1$, they define $h_i$ on antichains $A\in\cA(\A^{n-1})$ as
$$h_i(A)=\sum_{j=1}^i \one_{[j,i]}(A)+\sum_{j=i}^{n-1}\one_{[i,j]}(A),$$ 
where $\one_{[i,j]}(A)$ is the indicator function that equals 1 if $[i,j]\in A$ and $0$ otherwise. 
For $\pi \in \S_n(321)$, we now define $h_i(\pi)$ to equal $h_i(\Exc(\pi))$. In terms of the array of $\pi\in\S_n(321)$, $h_i(\pi)$ counts the number of crosses of the form $(j,i+1)$ with $1\le j\le i$, plus the number of crosses of the form $(i,j)$ with $i+1\le j\le n$. Note that a cross in $(i,i+1)$ is counted twice. See the left of Figure~\ref{fig:hi_li} for a visualization. We can also write
\begin{equation}\label{def:hi} h_i(\pi)=\ind_{\pi^{-1}(i+1)<i+1} + \ind_{\pi(i)>i},\end{equation}
where $\ind_B $ is defined to be 1 if the statement $B$ is true and 0 otherwise. 

\begin{figure}[htb]
    \centering
\begin{tikzpicture}[scale=0.5]
\fill[gray!30!white] (0,3) rectangle (3,4);
\fill[gray!30!white] (2,3) rectangle (3,9);
\fill[gray!70!white] (2,3) rectangle (3,4);
 \draw (0,0) grid (9,9);
 \draw[dotted] (0,0)--(9,9);
 \perm{{3,1,4,2,6,7,9,5,8}}
 \draw (-1,3.5) node {$i+1$};
 \draw (2.5,9.5) node {$i$};
\end{tikzpicture}
\hspace{1cm}
\begin{tikzpicture}[scale=0.5]
\fill[gray!30!white] (0,2) rectangle (3,3);
\fill[gray!30!white] (2,3) rectangle (3,9);
 \draw (0,0) grid (9,9);
 \draw[dotted] (0,0)--(9,9);
 \perm{{3,1,4,2,6,7,9,5,8}}
 \draw (-1,2.5) node {$i$};
 \draw (2.5,9.5) node {$i$};
\end{tikzpicture}
\caption{Visualization of the statistics $h_i$ (left) and $\ell_i$ (right) on the permutation $\pi = 314267958$, as the number of crosses in the shaded squares of the permutation array, for $i=3$. The darker square at the corner of the array on the left is counted twice. In this example, $h_3(\pi)=\ell_3(\pi)=2$.}
\label{fig:hi_li}
\end{figure}
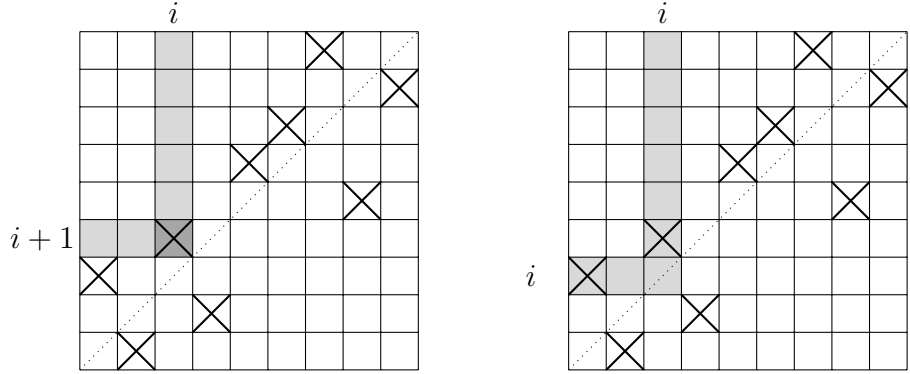

Hopkins and Joseph prove in \cite[Thm.\ 4.3]{HJ20}
that the statistic $h_i$ on antichains is $1$-mesic under $\rA$. This result can be translated in terms of $321$-avoiding permutations as follows.

\begin{theorem}[\cite{HJ20}]\label{thm:hi} 
For $1\le i\le n-1$, the statistic $h_i$ is $1$-mesic under the action of $\rS$ on $\S_n(321)$.
\end{theorem}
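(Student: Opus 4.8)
The plan is to derive this statement as a direct translation of the Hopkins--Joseph result \cite[Thm.\ 4.3]{HJ20}, using the fact that $\rS$ is conjugate to $\rA$ via the bijection $\Exc$. By Definition~\ref{def:rowmotion}, $\rS=\Exc^{-1}\circ\rA\circ\Exc$, so $\Exc$ carries each orbit $\mathcal{O}$ of $\rS$ on $\S_n(321)$ bijectively onto an orbit $\Exc(\mathcal{O})$ of $\rA$ on $\cA(\A^{n-1})$, and these two orbits have the same cardinality. Since we defined $h_i(\pi)=h_i(\Exc(\pi))$ by fiat, we get
\[
\frac{1}{\card{\mathcal{O}}}\sum_{\pi\in\mathcal{O}}h_i(\pi)
=\frac{1}{\card{\Exc(\mathcal{O})}}\sum_{A\in\Exc(\mathcal{O})}h_i(A).
\]

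The right-hand side is the average of $h_i$ over a complete $\rA$-orbit, which by \cite[Thm.\ 4.3]{HJ20} equals $1$. This establishes that $h_i$ is $1$-mesic under $\rS$, completing the proof.

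There is essentially no obstacle here: the work has been front-loaded into setting up the bijection $\Exc$ and into the (cited) antichain result. The only thing worth spelling out explicitly is why $\Exc$ preserves orbit structure, i.e.\ that conjugation by a bijection sends orbits to orbits of the same size, and this is immediate from $\rS^k=\Exc^{-1}\circ\rA^k\circ\Exc$ for all $k$. One could optionally also remark here on the alternative interpretation of $h_i(\pi)$ given in \eqref{def:hi} as $\ind_{\pi^{-1}(i+1)<i+1}+\ind_{\pi(i)>i}$, since that permutation-theoretic description is what makes the statement natural on $\S_n(321)$, but it is not needed for the proof. If the paper prefers a self-contained argument, the homomesy of $h_i$ under $\rA$ could instead be re-derived from the Panyushev-type identity \eqref{eq:Pan} together with the observation that the two sums $\sum_j \one_{[j,i]}$ and $\sum_j \one_{[i,j]}$ track the endpoints of antichain elements, but deferring to \cite{HJ20} keeps this section short, and that is the approach I would take.
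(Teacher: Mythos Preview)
Your proposal is correct and matches the paper's treatment: the paper does not give an independent proof but simply attributes the result to \cite{HJ20}, noting that it is the direct translation of \cite[Thm.\ 4.3]{HJ20} via the conjugation $\rS=\Exc^{-1}\circ\rA\circ\Exc$ and the definition $h_i(\pi)=h_i(\Exc(\pi))$. Your explicit spelling-out of why $\Exc$ carries $\rS$-orbits to $\rA$-orbits is exactly the content behind this translation.
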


See Figure~\ref{fig:orbit} for a computation of $h_2(\pi)$ over an entire orbit for $n=4$ and one for $n=5$. As Theorem~\ref{thm:hi} asserts, the average of this statistic is 1 over each orbit.

\def\backgr{
\fill[brown!35!white] (0,1) rectangle (2,2);
\fill[brown!35!white] (1,2) rectangle (2,4);
\fill[pattern=north east lines,pattern color=violet] (0,0) rectangle (1,1);
\fill[pattern=north east lines,pattern color=violet] (1,1) rectangle (2,2);
\fill[pattern=north east lines,pattern color=violet] (2,2) rectangle (3,3);
\fill[pattern=north east lines,pattern color=violet] (3,3) rectangle (4,4);
 \draw (0,0) grid (4,4);
 \draw[dotted] (0,0)--(4,4);
}

\def\backgrr{
\fill[brown!35!white] (0,1) rectangle (2,2);
\fill[brown!35!white] (1,2) rectangle (2,5);
\fill[pattern=north east lines,pattern color=violet] (0,0) rectangle (1,1);
\fill[pattern=north east lines,pattern color=violet] (1,1) rectangle (2,2);
\fill[pattern=north east lines,pattern color=violet] (2,2) rectangle (3,3);
\fill[pattern=north east lines,pattern color=violet] (3,3) rectangle (4,4);
\fill[pattern=north east lines,pattern color=violet] (4,4) rectangle (5,5);
 \draw (0,0) grid (5,5);
 \draw[dotted] (0,0)--(5,5);
}

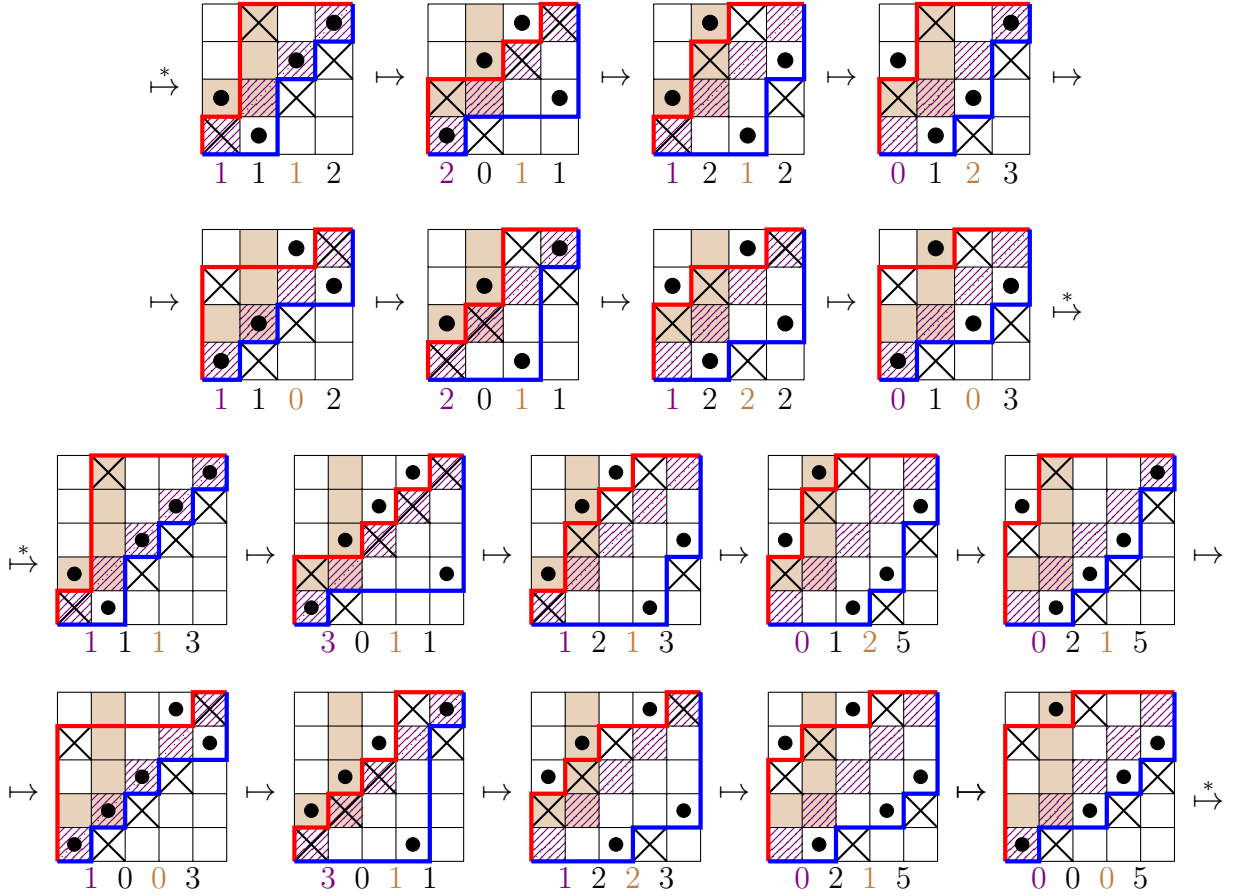
\begin{figure}[htb]
    \centering
\begin{tikzpicture}[scale=0.5]
\backgr
 \perm{{1,4,2,3}}
 \draw[red,ultra thick] (0,0) \N\E\N\N\N\E\E\E;
 \draw[blue,ultra thick] (0,0) \E\E\N\N\E\N\E\N;
 \permc{{2,1,3,4}}
 \draw (-1,2) node {$\stackrel{*}{\mapsto}$};
 \draw (5,2) node {$\mapsto$};
   \draw[brown] (2.5,-.5) node {1};
 \draw (1.5,-.5) node {1};
 \draw[violet] (.5,-.5) node {1};
 \draw (3.5,-.5) node {2};
\begin{scope}[shift={(6,0)}]
\backgr
 \perm{{2,1,3,4}}
 \draw[red,ultra thick] (0,0) \N\N\E\E\N\E\N\E;
 \draw[blue,ultra thick] (0,0) \E\N\E\E\E\N\N\N;
 \permc{{1,3,4,2}}
 \draw (5,2) node {$\mapsto$};
  \draw[brown] (2.5,-.5) node {1};
 \draw (1.5,-.5) node {0};
 \draw[violet] (.5,-.5) node {2};
 \draw (3.5,-.5) node {1};
 \end{scope}
 \begin{scope}[shift={(12,0)}]
\backgr
 \perm{{1,3,4,2}}
 \draw[red,ultra thick] (0,0) \N\E\N\N\E\N\E\E;
 \draw[blue,ultra thick] (0,0) \E\E\E\N\N\E\N\N;
 \permc{{2,4,1,3}}
 \draw (5,2) node {$\mapsto$};
 \draw[brown] (2.5,-.5) node {1};
 \draw (1.5,-.5) node {2};
 \draw[violet] (.5,-.5) node {1};
 \draw (3.5,-.5) node {2};
 \end{scope}
  \begin{scope}[shift={(18,0)}]
\backgr
 \perm{{2,4,1,3}}
 \draw[red,ultra thick] (0,0) \N\N\E\N\N\E\E\E;
 \draw[blue,ultra thick] (0,0) \E\E\N\E\N\N\E\N;
 \permc{{3,1,2,4}}
 \draw (5,2) node {$\mapsto$};
 \draw[brown] (2.5,-.5) node {2};
 \draw (1.5,-.5) node {1};
 \draw[violet] (.5,-.5) node {0};
 \draw (3.5,-.5) node {3};
 \end{scope}
\begin{scope}[shift = {(0,-6)}]
\backgr
 \perm{{3,1,2,4}}
 \draw[red,ultra thick] (0,0) \N\N\N\E\E\E\N\E;
 \draw[blue,ultra thick] (0,0) \E\N\E\N\E\E\N\N;
 \permc{{1,2,4,3}}
  \draw (-1,2) node {$\mapsto$};
 \draw (5,2) node {$\mapsto$};
  \draw[brown] (2.5,-.5) node {0};
 \draw (1.5,-.5) node {1};
 \draw[violet] (.5,-.5) node {1};
 \draw (3.5,-.5) node {2};
 \end{scope}
\begin{scope}[shift={(6,-6)}]
\backgr
 \perm{{1,2,4,3}}
 \draw[red,ultra thick] (0,0) \N\E\N\E\N\N\E\E;
 \draw[blue,ultra thick] (0,0) \E\E\E\N\N\N\E\N;
 \permc{{2,3,1,4}}
 \draw (5,2) node {$\mapsto$};
  \draw[brown] (2.5,-.5) node {1};
 \draw (1.5,-.5) node {0};
 \draw[violet] (.5,-.5) node {2};
 \draw (3.5,-.5) node {1};
 \end{scope}
 \begin{scope}[shift={(12,-6)}]
\backgr
 \perm{{2,3,1,4}}
 \draw[red,ultra thick] (0,0) \N\N\E\N\E\E\N\E;
 \draw[blue,ultra thick] (0,0) \E\E\N\E\E\N\N\N;
 \permc{{3,1,4,2}}
 \draw (5,2) node {$\mapsto$};
  \draw[brown] (2.5,-.5) node {2};
 \draw (1.5,-.5) node {2};
 \draw[violet] (.5,-.5) node {1};
 \draw (3.5,-.5) node {2};
 \end{scope}
  \begin{scope}[shift={(18,-6)}]
\backgr
 \perm{{3,1,4,2}}
 \draw[red,ultra thick] (0,0) \N\N\N\E\E\N\E\E;
 \draw[blue,ultra thick] (0,0) \E\N\E\E\N\E\N\N;
 \permc{{1,4,2,3}}
\draw (5,2) node {$\stackrel{*}{\mapsto}$};
 \draw[brown] (2.5,-.5) node {0};
 \draw (1.5,-.5) node {1};
 \draw[violet] (.5,-.5) node {0};
 \draw (3.5,-.5) node {3};
 \end{scope}
\end{tikzpicture}\bigskip

\begin{tikzpicture}[scale = .45]
\backgrr
 \perm{{1,5,2,3,4}}
 \draw[red,ultra thick] (0,0) \N\E\N\N\N\N\E\E\E\E;
 \draw[blue,ultra thick] (0,0) \E\E\N\N\E\N\E\N\E\N;
 \permc{{2,1,3,4,5}}
  \draw (-1,2) node {$\stackrel{*}{\mapsto}$};
 \draw (6,2) node {$\mapsto$};
    \draw[brown] (3,-.5) node {1};
 \draw (2,-.5) node {1};
 \draw[violet] (1,-.5) node {1};
 \draw (4,-.5) node {3};
 \begin{scope}[shift={(7,0)}]
\backgrr
 \perm{{2,1,3,4,5}}
 \draw[red,ultra thick] (0,0) \N\N\E\E\N\E\N\E\N\E;
 \draw[blue,ultra thick] (0,0) \E\N\E\E\E\E\N\N\N\N;
 \permc{{1,3,4,5,2}}
 \draw (6,2) node {$\mapsto$};
    \draw[brown] (3,-.5) node {1};
 \draw (2,-.5) node {0};
 \draw[violet] (1,-.5) node {3};
 \draw (4,-.5) node {1};
 \end{scope}
  \begin{scope}[shift={(14,0)}]
\backgrr
 \perm{{1,3,4,5,2}}
 \draw[red,ultra thick] (0,0) \N\E\N\N\E\N\E\N\E\E;
 \draw[blue,ultra thick] (0,0) \E\E\E\E\N\N\E\N\N\N;
 \permc{{2,4,5,1,3}}
 \draw (6,2) node {$\mapsto$};
    \draw[brown] (3,-.5) node {1};
 \draw (2,-.5) node {2};
 \draw[violet] (1,-.5) node {1};
 \draw (4,-.5) node {3};
 \end{scope}
   \begin{scope}[shift={(21,0)}]
\backgrr
 \perm{{2,4,5,1,3}}
 \draw[red,ultra thick] (0,0) \N\N\E\N\N\E\N\E\E\E;
 \draw[blue,ultra thick] (0,0) \E\E\E\N\E\N\N\E\N\N;
 \permc{{3,5,1,2,4}}
 \draw (6,2) node {$\mapsto$};
    \draw[brown] (3,-.5) node {2};
 \draw (2,-.5) node {1};
 \draw[violet] (1,-.5) node {0};
 \draw (4,-.5) node {5};
 \end{scope}
    \begin{scope}[shift={(28,0)}]
\backgrr
 \perm{{3,5,1,2,4}}
 \draw[red,ultra thick] (0,0) \N\N\N\E\N\N\E\E\E\E;
 \draw[blue,ultra thick] (0,0) \E\E\N\E\N\E\N\N\E\N;
 \permc{{4,1,2,3,5}}
 \draw (6,2) node {$\mapsto$};
    \draw[brown] (3,-.5) node {1};
 \draw (2,-.5) node {2};
 \draw[violet] (1,-.5) node {0};
 \draw (4,-.5) node {5};
 \end{scope}
 \begin{scope}[shift = {(0,-7)}]
\backgrr
 \perm{{4,1,2,3,5}}
 \draw[red,ultra thick] (0,0) \N\N\N\N\E\E\E\E\N\E;
 \draw[blue,ultra thick] (0,0) \E\N\E\N\E\N\E\E\N\N;
 \permc{{1,2,3,5,4}}
 \draw (-1,2) node {$\mapsto$};
 \draw (6,2) node {$\mapsto$};
    \draw[brown] (3,-.5) node {0};
 \draw (2,-.5) node {0};
 \draw[violet] (1,-.5) node {1};
 \draw (4,-.5) node {3};
  \end{scope}
 \begin{scope}[shift={(7,-7)}]
\backgrr
 \perm{{1,2,3,5,4}}
 \draw[red,ultra thick] (0,0) \N\E\N\E\N\E\N\N\E\E;
 \draw[blue,ultra thick] (0,0) \E\E\E\E\N\N\N\N\E\N;
 \permc{{2,3,4,1,5}}
 \draw (6,2) node {$\mapsto$};
    \draw[brown] (3,-.5) node {1};
 \draw (2,-.5) node {0};
 \draw[violet] (1,-.5) node {3};
 \draw (4,-.5) node {1};
 \end{scope}
  \begin{scope}[shift={(14,-7)}]
\backgrr
 \perm{{2,3,4,1,5}}
 \draw[red,ultra thick] (0,0) \N\N\E\N\E\N\E\E\N\E;
 \draw[blue,ultra thick] (0,0) \E\E\E\N\E\E\N\N\N\N;
 \permc{{3,4,1,5,2}}
 \draw (6,2) node {$\mapsto$};
    \draw[brown] (3,-.5) node {2};
 \draw (2,-.5) node {2};
 \draw[violet] (1,-.5) node {1};
 \draw (4,-.5) node {3};
 \end{scope}
   \begin{scope}[shift={(21,-7)}]
\backgrr
 \perm{{3,4,1,5,2}}
 \draw[red,ultra thick] (0,0) \N\N\N\E\N\E\E\N\E\E;
 \draw[blue,ultra thick] (0,0) \E\E\N\E\E\N\E\N\N\N;
 \permc{{4,1,5,2,3}}
 \draw (6,2) node {$\mapsto$};
    \draw[brown] (3,-.5) node {1};
 \draw (2,-.5) node {2};
 \draw[violet] (1,-.5) node {0};
 \draw (4,-.5) node {5};
 \end{scope}
    \begin{scope}[shift={(28,-7)}]
\backgrr
 \perm{{4,1,5,2,3}}
 \draw[red,ultra thick] (0,0) \N\N\N\N\E\E\N\E\E\E;
 \draw[blue,ultra thick] (0,0) \E\N\E\E\N\E\N\E\N\N;
 \permc{{1,5,2,3,4}}
  \draw (-1,2) node {$\mapsto$};
 \draw (6,2) node {$\stackrel{*}{\mapsto}$};
    \draw[brown] (3,-.5) node {0};
 \draw (2,-.5) node {0};
 \draw[violet] (1,-.5) node {0};
 \draw (4,-.5) node {5};
 \end{scope}
\end{tikzpicture}
    \caption{The rowmotion orbits containing 1423 (above) and 15234 (below). The numbers below each diagram are the values  $\textcolor{violet}{\fp(\pi)}$, $h_2(\pi)$, $\textcolor{brown}{\ell_2(\pi)}$ and  $\inv(\pi)$, from left to right.}
    \label{fig:orbit}
\end{figure}

Next we define a new family of permutation statistics, that we denote by $\ell_i$ for $1\le i \le n$. For $\pi\in\S_n(321)$, let $\ell_i(\pi)$ be the number of crosses in the array of $\pi$ of the form $(j,i)$ with $1\le j\le i$, plus the number of crosses of the form $(i,j)$ with $i< j\le n$. 
See the right of Figure~\ref{fig:hi_li} for a visualization of this statistic. We can also write
\begin{equation}\label{def:li}\ell_i(\pi)=\ind_{\pi^{-1}(i)\le i}+\ind_{\pi(i)>i}.\end{equation}

\begin{theorem}\label{thm:li}
		For $1\le i\le n$, the statistic $\ell_i$ is $1$-mesic under the action of $\rho_{\S}$ on $\S_n(321)$. 
	\end{theorem}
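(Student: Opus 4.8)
The plan is to show that $\ell_i$ has the same sum over every $\rS$-orbit as the statistic $h_i$, and then quote Theorem~\ref{thm:hi}. The case $i=n$ is immediate, since $\ell_n(\pi)=\ind_{\pi^{-1}(n)\le n}+\ind_{\pi(n)>n}=1$ for all $\pi$. So assume $1\le i\le n-1$. Comparing~\eqref{def:li} with~\eqref{def:hi}, the terms $\ind_{\pi(i)>i}$ cancel, leaving $\ell_i(\pi)-h_i(\pi)=\ind_{\pi^{-1}(i)\le i}-\ind_{\pi^{-1}(i+1)<i+1}$. The crux is to prove the identity $\ind_{\pi^{-1}(i)\le i}=\ind_{\rS(\pi)^{-1}(i+1)<i+1}$; granting it, $\ell_i-h_i$ is the coboundary $g\circ\rS-g$ of the function $g(\tau)=\ind_{\tau^{-1}(i+1)<i+1}$, so its sum over any orbit telescopes to $0$. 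Hence $\sum_{\pi\in\mathcal O}\ell_i(\pi)=\sum_{\pi\in\mathcal O}h_i(\pi)=|\mathcal O|$ for every orbit $\mathcal O$, which is exactly the claim.

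To establish $\ind_{\pi^{-1}(i)\le i}=\ind_{\rS(\pi)^{-1}(i+1)<i+1}$ I would translate both sides into conditions on antichains of $\A^{n-1}$. A position $k\le i$ with $\rS(\pi)(k)=i+1$ is an excedance of $\rS(\pi)$ contributing the interval $[k,i]$ to $\Exc(\rS(\pi))$, so the right-hand side holds iff $\Exc(\rS(\pi))=\rA(\Exc(\pi))$ contains an interval with right endpoint $i$. For the left-hand side, $\pi^{-1}(i)>i$ means the value $i$ occupies a position $p>i$, i.e.\ $(i,p)$ is an excedance of $\pi^{-1}$ contributing an interval with left endpoint $i$ to $\Exc(\pi^{-1})$; so $\pi^{-1}(i)\le i$ holds iff $\Exc(\pi^{-1})$ has no interval with left endpoint $i$. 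Now I would use $\Exc(\pi^{-1})=\LKA(\rA(\Exc(\pi)))$ — which follows by applying Equation~\eqref{eq:LKS_LKA_EXC} to $\pi$ and then using $\pi^{-1}=\LKS(\rS^{-1}(\pi))$ from Lemma~\ref{LK_of_Perm} — together with the Panyushev description of $\LKA$: for $1\le i\le n-1$, the antichain $\LKA(B)$ has an interval with left endpoint $i$ precisely when $B$ has no interval with right endpoint $i$. Applied with $B=\rA(\Exc(\pi))$, this shows that $\Exc(\pi^{-1})$ has no interval with left endpoint $i$ iff $\rA(\Exc(\pi))$ does have an interval with right endpoint $i$, which is the same condition obtained for the right-hand side. (An equivalent but slightly more computational route is to verify directly that $\ell_i(\pi)=\sum_{j=1}^{i}\one_{[j,i]}(\rA(\Exc(\pi)))+\sum_{j=i}^{n-1}\one_{[i,j]}(\Exc(\pi))$ and then reindex the orbit sum using $\rA^{|\mathcal O|}=\mathrm{id}$ on the $\Exc$-images, which collapses it to $\sum_{\pi\in\mathcal O}h_i(\Exc(\pi))$.)

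The main obstacle is the antichain identity of the second paragraph, and behind it the realization that $\ell_i$ cannot be handled the way $\fp$ was in Theorem~\ref{thm:fp}: it is not a difference of individually homomesic antichain statistics. Indeed, the two indicator pieces of $\ell_i$ involve $\Exc(\pi)$ and $\Exc(\pi^{-1})$, which traverse \emph{different} $\rA$-orbits as $\pi$ runs over an $\rS$-orbit, and the antichain statistic $\sum_{j\ge i}\one_{[i,j]}$ is not homomesic under $\rA$, so averaging the two pieces separately is not available. The coboundary identity — equivalently, the relation $\Exc(\pi^{-1})=\LKA(\rA(\Exc(\pi)))$ combined with the Panyushev description of $\LKA$ — is precisely the device that circumvents this, converting the orbit sum of $\ell_i$ into that of $h_i$, after which everything reduces to the already-established Theorem~\ref{thm:hi}.
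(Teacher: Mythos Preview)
Your overall strategy matches the paper's exactly: show that $\sum_{\pi\in\mathcal O}\ell_i(\pi)=\sum_{\pi\in\mathcal O}h_i(\pi)$ by establishing the identity $\ind_{\pi^{-1}(i)\le i}=\ind_{\rS(\pi)^{-1}(i+1)<i+1}$, then invoke Theorem~\ref{thm:hi}. That identity is precisely the first part of Lemma~\ref{UpShift}.

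Where you differ is in how you prove this identity. The paper gives a two-line argument using the description $\rS=E_v^{-1}\circ E_p$ from Equation~\eqref{eq:rS_paths}: $\pi^{-1}(i)\le i$ says $\pi$ has a weak excedance in row $i$, i.e.\ $E_p(\pi)$ has a peak in row $i$, i.e.\ $E_v(\sigma)=E_p(\pi)$ has a valley in row $i+1$, i.e.\ $\sigma=\rS(\pi)$ has an excedance in row $i+1$. Your route via $\Exc(\pi^{-1})=\LKA(\rA(\Exc(\pi)))$ and the Panyushev description of $\LKA$ is correct, but it is a longer detour to the same fact; the Dyck-path argument is more direct and does not need Lemma~\ref{LK_of_Perm} or Equation~\eqref{eq:LKS_LKA_EXC}. (Incidentally, the paper also disposes of $i=1$ separately, as $\ell_1\equiv 1$; your general argument covers it, but you should note that $h_i$ is only defined for $1\le i\le n-1$, which is fine since you already split off $i=n$.)
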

 To prove this theorem, we will provide an alternative formulation of $\ell_i$ and use the following lemma.

	\begin{lemma}\label{UpShift}
		Let $\pi\in \S_n(321)$ and $\sigma = \rS(\pi)$. For all $1\le i \le n-1$, 
		\[ \ind_{\pi^{-1}(i)\le i} = \ind_{\sigma^{-1}(i+1)<i+1}, \]
		and for all $2\le i \le n$, 
		\[ \ind_{\pi(i)\ge i} = \ind_{\sigma(i-1)>i-1}. \]
	\end{lemma}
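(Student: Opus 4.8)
The plan is to transport both identities onto the single Dyck path $D := E_p(\pi)$. Since $\rS = E_v^{-1}\circ E_p$ by~\eqref{eq:rS_paths}, we have $E_v(\sigma)=D$ as well, so $D$ simultaneously records the weak excedances of $\pi$ (as its peaks) and the excedances of $\sigma$ (as its valleys). Concretely, by the definitions of $E_p$ and $E_v$, a weak excedance $(j,\pi(j))$ of $\pi$ is the peak of $D$ at the top-left corner $(j-1,\pi(j))$ of its cell, and an excedance $(j,\sigma(j))$ of $\sigma$ is the valley of $D$ at the bottom-right corner $(j,\sigma(j)-1)$ of its cell. Reading off coordinates: for $1\le h\le n-1$, $D$ has a peak of height $h$ iff $h$ is a value occurring at a weak excedance of $\pi$, i.e.\ iff $\pi^{-1}(h)\le h$, and $D$ has a valley of height $h$ iff $h+1$ occurs at an excedance of $\sigma$, i.e.\ iff $\sigma^{-1}(h+1)<h+1$; similarly $D$ has a peak at horizontal coordinate $h$ iff $\pi(h+1)\ge h+1$, and a valley at horizontal coordinate $h$ iff $\sigma(h)>h$.

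With this dictionary in place, the lemma reduces to one elementary statement about an arbitrary path $D\in\Dul_n$: for $1\le h\le n-1$, $D$ has a peak of height $h$ iff it has a valley of height $h$, and likewise with ``height'' replaced by ``horizontal coordinate''. I would prove this using that both coordinates are weakly increasing along such a path. Fix $h$ with $1\le h\le n-1$; the lattice points of $D$ at height $h$ form a horizontal segment from some $(c,h)$ to some $(d,h)$ with $c\le d$, and because $1\le h\le n-1$ the path must reach $(c,h)$ by a $\uu$-step and leave $(d,h)$ by a $\uu$-step. Hence there is a peak of height $h$ (necessarily at $(c,h)$) exactly when $c<d$, and this is also exactly when there is a valley of height $h$ (necessarily at $(d,h)$). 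The horizontal-coordinate statement is the same argument with $\uu$ and $\dd$ interchanged. Substituting $h=i$ into the first equivalence (valid for $1\le i\le n-1$) and $h=i-1$ into the second (valid for $2\le i\le n$) yields the two claimed identities.

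I expect the only real care to be needed in two places. First, the coordinate offsets must be tracked precisely — that a weak excedance gives a peak at the top-left corner of its cell while an excedance gives a valley at the bottom-right corner — since this is what makes the ``$i$'' on the $\pi$-side line up with the ``$i+1$'' or ``$i-1$'' on the $\sigma$-side. Second, the boundary cases $h=0$ and $h=n$ must be excluded honestly: a path $D\in\Dul_n$ always has a peak of height $n$ and a peak at horizontal coordinate $0$ but never a valley at either, which is exactly why the first identity is stated only for $i\le n-1$ (there is no value $n+1$ for $\sigma$) and the second only for $i\ge 2$ (there is no position $0$). As an aside, one could run the same proof on antichains using $\Exc=\ant\circ E_v=\Path^{-1}\circ E_p$ together with $\rA=\ant\circ\Path$, but the Dyck-path formulation is the most transparent.
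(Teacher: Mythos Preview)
Your proof is correct and takes essentially the same approach as the paper's: both transport the identities onto the common Dyck path $D=E_p(\pi)=E_v(\sigma)$ and reduce to the observation that $D$ has a peak at a given height (respectively, horizontal coordinate) if and only if it has a valley there. The paper indexes by the \emph{row of the array} (so it phrases this as ``peak in row $i$ iff valley in row $i+1$'') while you index by the lattice-point coordinate, but this is only a cosmetic difference; your version makes the offset explicit and spells out the peak-iff-valley equivalence that the paper leaves to the reader.
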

	\begin{proof}
		 The left-hand side of the first equality equals one if $\pi$ has a weak excedance in row $i$ of the array, which is equivalent to the path $E_p(\pi)$ having a peak in row $i$. This happens if and only if this path, which equals $E_v(\sigma)$ by Equation~\eqref{eq:rS_paths}, has a valley in row $i+1$. But this is equivalent to $\sigma$ having an excedance in row $i+1$, and also to the right-hand side being equal to one.
		
		The second equality is proved similarly using columns instead of rows.
	\end{proof}

\begin{proof}[Proof of Theorem \ref{thm:li}]
    The statement is trivial when $i\in\{1,n\}$, as in this case $\ell_i(\pi)=1$ for all $\pi\in\S_n$. 
    Suppose now that $2 \le i \le n-1$, and let $\mathcal{O}$ be an arbitrary orbit of $\rS$ on $\S_n(321)$. 
    By Equation~\eqref{def:li} and the first part of Lemma~\ref{UpShift},
    \begin{align*}\sum_{\pi\in \mathcal{O}}\ell_i(\pi)&=\sum_{\pi\in \mathcal{O}}\ \ind_{\pi^{-1}(i)\le i}+\sum_{\pi\in \mathcal{O}} \ind_{\pi(i)>i} =\sum_{\sigma\in \mathcal{O}}\ind_{\sigma^{-1}(i+1) <i+1}+\sum_{\pi\in \mathcal{O}} \ind_{\pi(i)>i}\\
    &=\sum_{\pi\in \mathcal{O}}\left(\ind_{\pi^{-1}(i+1) <i+1}+\ind_{\pi(i)>i}\right)=\sum_{\pi \in \mathcal{O}} h_i(\pi),\end{align*} using Equation~\eqref{def:hi} in the last step. Since $h_i$ is 1-mesic by Theorem \ref{thm:hi}, it follows that so is~$\ell_i$.
\end{proof}

	Theorem \ref{thm:li} yields an additional proof that the fixed point statistic on $\S_n(321)$ is 1-mesic under $\rho_{\S}$. 
 	
 	\begin{proof}[Second proof of Theorem \ref{thm:fp}]
We claim that, for any $\pi\in\S_n(321)$,
\[\fp(\pi)=\sum_{i=1}^n \ell_i(\pi)-\sum_{i=1}^{n-1} h_i(\pi).\]
  Indeed, the sum $\sum_{i=1}^n \ell_i(\pi)$ counts every excedance of $\pi$ twice and each fixed point once, whereas $\sum_{i=1}^{n-1} h_i(\pi)$ counts every excedance twice. Thus, by Theorems~\ref{thm:hi} and~\ref{thm:li}, the average of $\fp$ over each rowmotion orbit is $n-(n-1)=1$. 
 	\end{proof}

	Even though one cannot directly express $\ell_i(\pi)$ or $\fp(\pi)$ as a combination of indicator functions on the antichain $\Exc(\pi)$, we can still translate these statistics into statistics on antichains. 
	
	Let us start with the statistic $\fp$. Let $\sigma=\rS(\pi)$, and let $A=\Exc(\sigma)$. If $2\le i\le n-1$, then $(i,i)$ is a fixed point of $\pi$ if and only if  both $(i-1,i)$ and $(i,i+1)$ are excedances of $\sigma$. Additionally, by Lemma~\ref{UpShift}, $(1,1)$ is a fixed point of $\pi$ if and only if $\sigma(1)=2$, and $(n,n)$ is a fixed point of $\pi$ if and only if $\sigma(n-1)=n$. It follows that
	$$\fp(\pi)=\ind_{\sigma(1)=2}+\ind_{\sigma(1)=2\wedge\sigma(2)=3}+\dots+\ind_{\sigma(n-2)=n-1\wedge\sigma(n-1)=n}+\ind_{\sigma(n-1)=n}.$$
	Using now that an excedance $(a,b)$ of $\sigma$ corresponds to the element $[a,b-1]$ in $A=\Exc(\sigma)$ by Definition~\ref{def:Exc}, the statistic $\fp$ on the permutation $\pi$ translates into the following statistic on the antichain $A$:
	\begin{equation}\label{eq:fpA}\one_{[1,1]}+\min(\one_{[1,1]},\one_{[2,2]})+\dots+\min(\one_{[n-2,n-2]},\one_{[n-1,n-1]})+\one_{[n-1,n-1]}.	\end{equation}
	
	Let us now translate the statistic $\ell_i$ into a statistic on antichains. If $i=1$ or $i=n$, then $\ell_i$ is always equal to $1$, so let us assume that $2\le i\le n-1$. 
Again letting $\sigma=\rS(\pi)$, Equation~\eqref{def:li} and Lemma~\ref{UpShift} give
$$\ell_i(\pi)=\ind_{\pi^{-1}(i)\le i}+\ind_{\pi(i)\ge i}-\ind_{\pi(i)=i}
=\ind_{\sigma^{-1}(i+1)<i+1}+\ind_{\sigma(i-1)>i-1}-\ind_{\sigma(i-1)=i\wedge\sigma(i)=i+1}.
$$
Thus, the statistic $\ell_i$ on $\pi$ translates into the following statistic on $A=\Exc(\sigma)$:
\begin{align}\nonumber &\sum_{j=1}^{i}\one_{[j,i]}+\sum_{j=i-1}^{n-1}\one_{[i-1,j]}-\min(\one_{[i-1,i-1]},\one_{[i,i]})\\
\nonumber &\quad =\sum_{j=1}^{i-1}\one_{[j,i]}+\sum_{j=i}^{n-1}\one_{[i-1,j]}+\one_{[i,i]}+\one_{[i-1,i-1]}-\min(\one_{[i-1,i-1]},\one_{[i,i]})\\
\label{eq:liA} &\quad=\sum_{j=1}^{i-1}\one_{[j,i]}+\sum_{j=i}^{n-1}\one_{[i-1,j]}+\max(\one_{[i-1,i-1]},\one_{[i,i]}),
\end{align}
using that $x+y=\max(x,y)+\min(x,y)$.

\subsection{The sign statistic}
Next we describe how rowmotion on $321$-avoiding permutations interacts with the sign statistic. The sign of a permutation $\pi$ can be defined as $\sgn(\pi)=(-1)^{\inv(\pi)}$, where $\inv(\pi)=|\{(i,i'):i<i'\text{ and }\pi(i)>\pi(i')\}|$ 
is the number of inversions of $\pi$.

\begin{theorem}\label{SignRowmotionTheorem}
For all $\pi\in\S_n(321)$,
\[\sgn(\rS(\pi))=\sgn(\LKS(\pi))=\begin{cases}
\sgn(\pi) & \text{if $n$ is odd,}\\
-\sgn(\pi) & \text{if $n$ is even.}
 \end{cases}\]
\end{theorem}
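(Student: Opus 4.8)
The plan is to establish the two claimed equalities separately. For the second one — that $\sgn(\LKS(\pi))$ equals $\sgn(\pi)$ when $n$ is odd and $-\sgn(\pi)$ when $n$ is even — the key observation is Lemma~\ref{LK_of_Perm}, which gives $\pi^{-1}=\rS(\LKS(\pi))$, together with the fact that $\sgn(\pi^{-1})=\sgn(\pi)$. Thus it suffices to prove the first equality $\sgn(\rS(\pi))=\sgn(\LKS(\pi))$ with the stated parity dependence on $n$, and then the second equality follows by applying the first with $\LKS(\pi)$ in place of $\pi$ (using that $\LKS$ is an involution): $\sgn(\rS(\LKS(\pi)))=\sgn(\LKS(\LKS(\pi)))\cdot(\pm 1)=\sgn(\pi)\cdot(\pm1)$ and $\rS(\LKS(\pi))=\pi^{-1}$ by Lemma~\ref{LK_of_Perm}, so $\sgn(\pi)=\sgn(\pi^{-1})=\sgn(\pi)\cdot(\pm1)$ — wait, that forces the sign to be $+1$, so I need to be careful: actually the right way is that $\sgn(\LKS(\pi))=\sgn(\rS^{-1}(\pi^{-1}))$, and I should relate $\sgn$ of a permutation to $\sgn$ of its rowmotion image. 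So the real content is a single statement: \emph{$\sgn(\rS(\tau))=(-1)^{n-1}\sgn(\tau)$ for all $\tau\in\S_n(321)$}, and \emph{$\sgn(\LKS(\tau))=(-1)^{n-1}\sgn(\tau)$} as well; these two together with Lemma~\ref{LK_of_Perm} are mutually consistent since $\rS\circ\LKS$ then multiplies the sign by $(-1)^{2(n-1)}=1$, matching $\sgn(\tau^{-1})=\sgn(\tau)$.

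To prove $\sgn(\LKS(\tau))=(-1)^{n-1}\sgn(\tau)$, I would work through the Dyck-path side. Recall $\LKS=E_p^{-1}\circ\LK\circ E_p$ and that for a $321$-avoiding permutation the number of inversions has a clean description in terms of the excedance path: writing $\pi$ as a product of its excedance and deficiency data, one has $\inv(\pi)=\sum(\pi(i)-i)$ over excedances $i$ (this is a standard fact for $321$-avoiding permutations, since each such permutation is a union of two increasing sequences), and $\sum_i(\pi(i)-i)$ over excedances equals the total area-type statistic $\sum_{[a,b]\in\Exc(\pi)}(b-a+1)=\sum_{[a,b]\in\Exc(\pi)}(\rk([a,b])+1)$. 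So $\sgn(\pi)=(-1)^{\sum_{[a,b]\in\Exc(\pi)}(\rk([a,b])+1)}$. Now I use that $\LKS$ corresponds to the Panyushev/Lalanne--Kreweras complement $\LKA$ on the antichain $\Exc(\pi)$ (via \eqref{eq:LKS_LKA}, noting $\ant\circ E_p=\Exc\circ\rS$), and track how the quantity $\sum_{[a,b]\in A}(b-a+1) = |A| + \sum_{[a,b]\in A}\rk([a,b])$ changes under $A\mapsto\LKA(A)$. Using the explicit description of $\LKA$ from the Panyushev paragraph — if $A=\{[i_1,j_1],\dots,[i_k,j_k]\}$ then $\LKA(A)$ has $i$-coordinates $\{1,\dots,n-1\}\setminus\{j_1,\dots,j_k\}$ and $j$-coordinates $\{1,\dots,n-1\}\setminus\{i_1,\dots,i_k\}$ — one computes $\sum_{[a,b]\in\LKA(A)}(b-a) = \big(\sum_{b=1}^{n-1}b - \sum_{j_t}j_t\big) - \big(\sum_{a=1}^{n-1}a - \sum_{i_t}i_t\big) = \sum_t(i_t - j_t) = -\sum_{[a,b]\in A}(b-a)$, and $|\LKA(A)| = (n-1-k) - $ ... actually $|\LKA(A)| = n-1-k$ is not right either since the two coordinate multisets have size $n-1-k$ but we pair them up in increasing order to form intervals; in any case $\sum_{[a,b]\in\LKA(A)}(b-a+1) = |\LKA(A)| - \sum_{[a,b]\in A}(b-a)$. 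Reducing mod $2$: $\sum_{[a,b]\in A}(b-a+1) \equiv |A| + \sum(b-a)$, and after applying $\LKA$ we get $|\LKA(A)| - \sum_{A}(b-a) \equiv |\LKA(A)| + \sum_A(b-a) \pmod 2$, so the sign changes by $(-1)^{|A|+|\LKA(A)|}$. Since $|A| + |\LKA(A)| = \exc(\pi) + \exc(\LKS(\pi)^{-1})$... here I need the identity $|A|+|\LKA(A)| = n-1$, which holds because the number of peaks plus the number of ``lattice points strictly below the path on the line $x+y=n$'' type count works out — more directly, $|A|$ equals the number of excedances of $\pi$ and $|\LKA(A)|$ the number of deficiencies, and $\exc(\pi)+\operatorname{def}(\pi) = n - \fp(\pi)$, which is \emph{not} constantly $n-1$. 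Hmm — so the parity $(-1)^{|A|+|\LKA(A)|}$ is not obviously $(-1)^{n-1}$. I'll need to revisit: perhaps the correct base statistic is not $\sum(b-a+1)$ but the number of lattice points under the path, or $\inv$ relates to $\binom{n}{2}$ minus something. Let me instead use: for $\pi\in\S_n(321)$, $\inv(\pi) = \sum_{i:\,\pi(i)>i}(\pi(i)-i)$ — I'll double-check this on a small example and, assuming it, reduce everything to a computation with the Panyushev formula as above; the parity bookkeeping, including isolating the $(-1)^{n-1}$ factor, is exactly the step I expect to require care.

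For the equality $\sgn(\rS(\tau))=(-1)^{n-1}\sgn(\tau)$: once the $\LKS$ statement is in hand, combine it with Lemma~\ref{LK_of_Perm} ($\rS(\LKS(\tau))=\tau^{-1}$, hence $\rS(\tau) = \LKS(\tau^{-1})$ after substituting $\LKS(\tau)$ for $\tau$ and using that $\LKS$ is an involution), giving $\sgn(\rS(\tau)) = \sgn(\LKS(\tau^{-1})) = (-1)^{n-1}\sgn(\tau^{-1}) = (-1)^{n-1}\sgn(\tau)$. Then the theorem's two displayed equalities both follow: $\sgn(\rS(\pi)) = (-1)^{n-1}\sgn(\pi)$ and $\sgn(\LKS(\pi)) = (-1)^{n-1}\sgn(\pi)$ are literally the statement, with $(-1)^{n-1} = 1$ for $n$ odd and $-1$ for $n$ even.

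\textbf{Main obstacle.} The crux is the sign/inversion computation: establishing the formula $\inv(\pi)=\sum_{i:\pi(i)>i}(\pi(i)-i)$ for $321$-avoiding $\pi$ (or an equivalent clean expression for $\sgn(\pi)$ in terms of the path $E_p(\pi)$), and then carrying out the parity bookkeeping under the Lalanne--Kreweras complement $\LKA$ to extract precisely the factor $(-1)^{n-1}$. Everything else is formal manipulation of the already-established identities relating $\rS$, $\LKS$, $\LKA$, $\Exc$, and inversion (Lemma~\ref{LK_of_Perm}, \eqref{eq:rS_paths}, \eqref{eq:LKS_LKA_EXC}).
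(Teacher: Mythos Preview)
Your plan has salvageable ingredients but two concrete errors, and it is tangled in a way the paper's proof avoids entirely.

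\textbf{Error 1.} You talked yourself out of a true identity. From the Panyushev description you quoted, the $i$-coordinates of $\LKA(A)$ form the set $\{1,\dots,n-1\}\setminus\{j_1,\dots,j_k\}$, so $|\LKA(A)|=n-1-k$ and hence $|A|+|\LKA(A)|=n-1$ \emph{always}. Your attempt to identify $|\LKA(A)|$ with the number of deficiencies of $\pi$ is simply false; that is where the doubt crept in. (Incidentally, your computation of $\sum_{\LKA(A)}(b-a)$ has a sign slip: it equals $+\sum_A(b-a)$, not $-\sum_A(b-a)$, since the $j'$-coordinates are the complement of the $i$-coordinates and vice versa. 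This does not affect the parity conclusion, which becomes $f(\LKA(A))-f(A)=n-1-2|A|\equiv n-1\pmod 2$.)

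\textbf{Error 2 (the real gap).} The map $\Exc$ does \emph{not} intertwine $\LKS$ with $\LKA$. What actually holds is $\Exc^{-1}\circ\LKA\circ\Exc=\rS\circ\LKS\circ\rS^{-1}$ (the displayed line just before~\eqref{eq:LKS_LKA_EXC}), so with $A=\Exc(\pi)$ one has $\Exc^{-1}(\LKA(A))=\rS(\LKS(\rS^{-1}(\pi)))=(\rS^{-1}(\pi))^{-1}$ by Lemma~\ref{LK_of_Perm}. Thus your parity computation compares $\inv(\pi)$ with $\inv(\rS^{-1}(\pi))$, proving the $\rS$ statement rather than the $\LKS$ statement you aimed for. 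That is fine---the $\LKS$ statement then follows from $\LKS(\pi)=(\rS(\pi))^{-1}$---but your narrative has the logic inverted, and as written the ``track $\inv$ through $\LKA$ to get $\inv(\LKS(\pi))$'' step does not do what you claim.

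\textbf{The paper's route.} The paper proves the $\rS$ statement directly, without Panyushev's formula. If the weak excedances of $\pi$ are $(i_1,j_1),\dots,(i_r,j_r)$ with $i_1<\cdots<i_r$, then $\inv(\pi)=\sum_k(j_k-i_k)$ (your formula). By~\eqref{eq:rS_paths} the excedances of $\sigma=\rS(\pi)$ sit at the valleys of $E_p(\pi)$, namely $(i_2-1,j_1+1),\dots,(i_r-1,j_{r-1}+1)$, whence
\[
\inv(\sigma)=\sum_{k=1}^{r-1}\bigl((j_k+1)-(i_{k+1}-1)\bigr)=\inv(\pi)-j_r+i_1+2(r-1)=\inv(\pi)-(n-1)+2(r-1),
\]
using $i_1=1$ and $j_r=n$. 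The $\LKS$ statement then follows in one line from $\LKS(\pi)=(\rS(\pi))^{-1}$ (Lemma~\ref{LK_of_Perm}) and $\sgn(\tau^{-1})=\sgn(\tau)$. This is shorter and sidesteps the bookkeeping you flagged as the main obstacle.
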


\begin{proof}
Let us first prove the statement about $\rS$.
Let $\pi\in\S_n(321)$.
If $(i,i')$ is an inversion of $\pi$, then $(i,\pi(i))$ must be an excedance and $(i',\pi(i'))$ must be a deficiency. In addition, for any given excedance $(i,\pi(i))$, the number of inversions of the form $(i,i')$ is equal to $\pi(i)-i$. It follows that if the weak excedances of $\pi$ are $(i_1,j_1),(i_2,j_2),\dots,(i_r,j_r)$ with $i_1<i_2<\dots<i_r$, then
$$\inv(\pi)=\sum_{k=1}^r (j_k-i_k).$$

Let $\sigma=\rS(\pi)$. Using the description~\eqref{eq:rS_paths}, the excedances of $\sigma$ are in the positions of the valleys of $E_p(\pi)$, which are $(i_2-1,j_1+1),(i_3-1,j_2+1),\dots,(i_r-1,j_{r-1}+1)$. 
It follows that
$$\inv(\sigma)=\sum_{k=1}^{r-1} ((j_k+1)-(i_{k+1}-1))=\sum_{k=1}^r (j_k-i_k)-j_r+i_1+2(r-1)=\inv(\pi)-n+1+2(r-1),$$
which has the same parity as $\inv(\pi)$ if $n$ is odd, and the opposite parity if $n$ is even. This proves the statement for the map $\rS$.

It follows from Lemma \ref{LK_of_Perm} that $\LKS(\sigma)=(\rS(\sigma))^{-1}$ for all $\sigma\in\S_n(321)$. Taking the inverse of a permutation preserves the number of inversions, and hence the statistic $\sgn$. Therefore,
\[\sgn(\LKS(\pi))=\sgn((\rS(\pi))^{-1})=\sgn(\rS(\pi)).\qedhere\]
\end{proof}

A permutation is said to be odd or even according to the parity of its number of inversions.
It is a classical result of Simion and Schmidt \cite[Prop.\ 2]{SS85} 
that, when $n$ is even, the set $\S_n(321)$ contains the same number of odd and even permutations. A bijective proof of this fact was given by Reifegerste~\cite{RE05}. Theorem~\ref{SignRowmotionTheorem} gives two new bijections, $\rS$ and $\LKS$, between the subsets of odd and even permutations in $\S_n(321)$. Furthermore, $\LKS$ has the additional property of being a sign-reversing involution on $\S_n(321)$. 

Finally, we note the following two immediate consequences of Theorem~\ref{SignRowmotionTheorem}.
\begin{corollary}
For even $n$, the statistic $\sgn$ on $\S_n(321)$ is $0$-mesic under the action of $\rS$.
\end{corollary}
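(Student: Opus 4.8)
The plan is to read this off immediately from Theorem~\ref{SignRowmotionTheorem}. Fix an even $n$ and an arbitrary orbit $\mathcal{O}$ of $\rS$ on $\S_n(321)$; write $m=|\mathcal{O}|$ for its length, which is finite since $\S_n(321)$ is a finite set. Choose any $\pi\in\mathcal{O}$, so that $\mathcal{O}=\{\pi,\rS(\pi),\dots,\rS^{m-1}(\pi)\}$ and $\rS^m(\pi)=\pi$.

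First I would use Theorem~\ref{SignRowmotionTheorem} to conclude that $\sgn(\rS^k(\pi))=(-1)^k\sgn(\pi)$ for every $k\ge 0$, since each application of $\rS$ reverses the sign when $n$ is even. Next I would observe that this forces $m$ to be even: taking $k=m$ in the identity gives $\sgn(\pi)=(-1)^m\sgn(\pi)$, and since $\sgn(\pi)=\pm1\neq 0$ we must have $(-1)^m=1$.

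Finally, summing over the orbit, $\sum_{\tau\in\mathcal{O}}\sgn(\tau)=\sgn(\pi)\sum_{k=0}^{m-1}(-1)^k=0$ because $m$ is even, so the average of $\sgn$ over $\mathcal{O}$ is $0$. Since $\mathcal{O}$ was arbitrary, $\sgn$ is $0$-mesic under $\rS$. There is no genuine obstacle here: the only inputs are the sign-alternation already established in Theorem~\ref{SignRowmotionTheorem} and the finiteness of $\S_n(321)$, which guarantees finite orbits. The one point worth spelling out is the parity argument showing every orbit has even length, and it is precisely this that makes the alternating sum vanish.
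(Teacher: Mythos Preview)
Your proof is correct and matches the paper's intent: the corollary is stated there as an immediate consequence of Theorem~\ref{SignRowmotionTheorem} with no further argument given, and the details you supply (sign alternation forces even orbit length, hence zero sum) are exactly the implicit reasoning.
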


\begin{corollary}\label{NoLKFixed}
For even $n$, the map $\LK$ on $\D_n$ has no fixed points.
\end{corollary}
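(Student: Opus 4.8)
The plan is to deduce this directly from Theorem~\ref{SignRowmotionTheorem} by transporting the statement about $\LK$ on $\D_n$ to a statement about the conjugate map $\LKS$ on $\S_n(321)$. By Equation~\eqref{def:LKS} we have $\LKS = E_p^{-1}\circ\LK\circ E_p$, and $E_p:\S_n(321)\to\Dul_n\cong\D_n$ is a bijection. Hence $D\in\D_n$ is a fixed point of $\LK$ if and only if $E_p^{-1}(D)$ is a fixed point of $\LKS$; in particular, $\LK$ has no fixed points on $\D_n$ if and only if $\LKS$ has no fixed points on $\S_n(321)$.

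So it suffices to show that $\LKS$ has no fixed points when $n$ is even. Suppose $\pi\in\S_n(321)$ satisfied $\LKS(\pi)=\pi$. Then, applying the second equality of Theorem~\ref{SignRowmotionTheorem} with $n$ even, we would get $\sgn(\pi)=\sgn(\LKS(\pi))=-\sgn(\pi)$, which is impossible since $\sgn(\pi)\in\{1,-1\}$. This contradiction shows $\LKS$ is fixed-point-free, and the corollary follows.

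There is essentially no obstacle here: the content is entirely carried by Theorem~\ref{SignRowmotionTheorem}, which already records that $\LKS$ is a sign-reversing involution on $\S_n(321)$ for even $n$. The only thing to be careful about is the bookkeeping of the conjugation, i.e.\ making sure $E_p$ genuinely identifies fixed points of $\LK$ with fixed points of $\LKS$; this is immediate from $\LKS = E_p^{-1}\circ\LK\circ E_p$ and the fact that $E_p$ is a bijection. (One could alternatively argue directly on Dyck paths, but invoking the sign statistic is the shortest route and is the reason this is labelled an immediate consequence.)
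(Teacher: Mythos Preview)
Your proof is correct and matches the paper's approach: the corollary is stated as an immediate consequence of Theorem~\ref{SignRowmotionTheorem}, and your argument via the conjugation $\LKS = E_p^{-1}\circ\LK\circ E_p$ together with the sign-reversing property of $\LKS$ for even $n$ is exactly the intended justification.
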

Corollary \ref{NoLKFixed} is equivalent to \cite[Thm.\ 4.6]{PAN04} for even~$n$.

\section{The Armstrong--Stump--Thomas bijection as a map on permutations}\label{sec:AST}

In~\cite{AST13}, Armstrong, Stump and Thomas constructed a bijection between antichains in root posets of finite Weyl groups and noncrossing matchings, having the property that it translates rowmotion on antichains into rotation of noncrossing matchings. In type $A$, we can interpret antichains as $321$-avoiding permutations via the bijection $\Exc$. In this section we will show the surprising fact that, with this interpretation, the Armstrong--Stump--Thomas bijection is equivalent to the well-known Robinson--Schensted--Knuth correspondence restricted to $321$-avoiding permutations. We will also show that this fact extends to the root poset in type~$B$.

\subsection{$\AST$ in type $A$}

The Armstrong--Stump--Thomas bijection is described in \cite{AST13} in much more generality, but for our purposes  we will be restricting exclusively to types $A$ and $B$. We follow the description given by Defant and Hopkins~\cite{DH21}. In type $A$, we denote this bijection by $\AST$. Recall the definition of $\NC_n$ from Section~\ref{sec:Dyck}. Throughout the section, for matchings in $\NC_n$, it will be convenient to denote the vertex $2n+1-i$ by $\ol{i}$, for each $1\le i\le n$, so that the vertices are $1,2,\dots, n, \ol{n}, \ol{n-1},\dots, \ol{1}$ in clockwise direction. 

\begin{definition}[\cite{AST13,DH21}]\label{def:AST}
Let $\AST: \cA(\A^{n-1})\to \NC_{n}$ be the bijection where the image of $A\in\cA(\A^{n-1})$ is the matching obtained as follows.
For each $i$ from $1$ to $n$, consider two options:
\begin{itemize}
    \item if $[i,j-1]\in A$ for some $j$, match the vertex $j$ with the nearest unmatched vertex in  counterclockwise direction;
    \item otherwise, match the vertex $\ol{i}$ with the nearest unmatched vertex in clockwise direction.
\end{itemize}
\end{definition}

See Figure~\ref{fig:ASTExample} for an example of this construction. Now we can state the main result of this section.

\begin{figure}[h]
    \centering
    \begin{tikzpicture}
        \typeA{3}
\an{1}{3}\an{3}{4}
\draw (1.1,1.2) node[right,scale=.7] {$[1,3]$};
\draw (2.6,.6) node[right,scale=.7] {$[3,4]$};
\draw[-to] (3,1) -- (3.4,1);
\begin{scope}[shift={(5.5,1)}]
\foreach \i in {1,...,5} {
\filldraw (90+18-\i*36:1.4) circle (.1);
 \draw (90+18-\i*36:1.7) node[scale=.7] {$\i$};
 \filldraw (90-18+\i*36:1.4) circle (.1);
 \draw (90-18+\i*36:1.7) node[scale=.7] {$\bar{\i}$};
}
\draw[-to] (2,0) -- (2.4,0);
\match{3}{4}
\end{scope}

\begin{scope}[shift={(10,1)}]
\foreach \i in {1,...,5} {
\filldraw (90+18-\i*36:1.4) circle (.1);
 \draw (90+18-\i*36:1.7) node[scale=.7] {$\i$};
 \filldraw (90-18+\i*36:1.4) circle (.1);
 \draw (90-18+\i*36:1.7) node[scale=.7] {$\bar{\i}$};
}
\draw[-to] (2,0) -- (2.4,0);
\match{3}{4}
\match{9}{10} 
\end{scope}

\begin{scope}[shift={(3,-3)}]
\foreach \i in {1,...,5} {
\filldraw (90+18-\i*36:1.4) circle (.1);
 \draw (90+18-\i*36:1.7) node[scale=.7] {$\i$};
 \filldraw (90-18+\i*36:1.4) circle (.1);
 \draw (90-18+\i*36:1.7) node[scale=.7] {$\bar{\i}$};
}
\draw[-to] (2,0) -- (2.4,0);
\match{3}{4}
\match{9}{10} 
\match{2}{5}
\end{scope}

\begin{scope}[shift={(7.5,-3)}]
\foreach \i in {1,...,5} {
\filldraw (90+18-\i*36:1.4) circle (.1);
 \draw (90+18-\i*36:1.7) node[scale=.7] {$\i$};
 \filldraw (90-18+\i*36:1.4) circle (.1);
 \draw (90-18+\i*36:1.7) node[scale=.7] {$\bar{\i}$};
}
\draw[-to] (2,0) -- (2.4,0);
\match{3}{4}
\match{9}{10} 
\match{2}{5}
\match{7}{8}
\end{scope}

\begin{scope}[shift={(12,-3)}]
\foreach \i in {1,...,5} {
\filldraw (90+18-\i*36:1.4) circle (.1);
 \draw (90+18-\i*36:1.7) node[scale=.7] {$\i$};
 \filldraw (90-18+\i*36:1.4) circle (.1);
 \draw (90-18+\i*36:1.7) node[scale=.7] {$\bar{\i}$};
}
\match{3}{4}
\match{9}{10} 
\match{2}{5}
\match{1}{6}
\match{7}{8}
\end{scope}
    \end{tikzpicture}
    \caption{The computation of $\AST(\{[1,3],[3,4]\})$ in $\A^4$ using Definition~\ref{def:AST}.}
    \label{fig:ASTExample}
\end{figure}
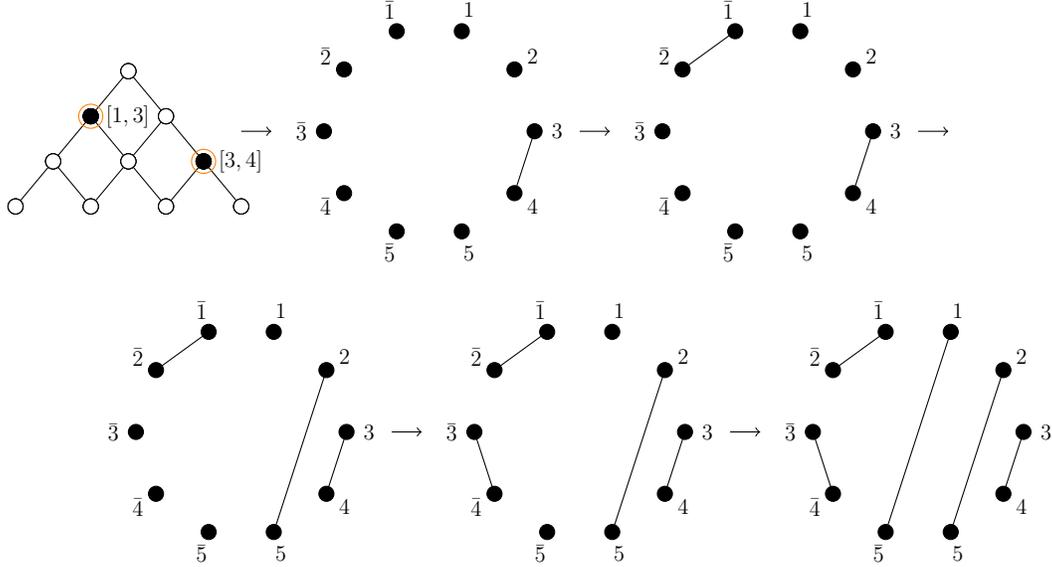

\begin{theorem}\label{ASTConnection} 
Let $\Match$, $\RSKD$, $\Exc$ and $\AST$ be the bijections from Definitions \ref{def:Match}, \ref{def:RSKD}, \ref{def:Exc} and \ref{def:AST}, respectively. Then $$\AST = \Match\circ \RSKD \circ \Exc^{-1}.$$
\end{theorem}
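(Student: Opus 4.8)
The plan is to compare the two sides of $\AST = \Match\circ \RSKD \circ \Exc^{-1}$ by tracking, for a fixed antichain $A\in\cA(\A^{n-1})$, exactly which pairs of vertices among $\{1,\dots,n,\ol n,\dots,\ol 1\}$ get matched. Let $\pi=\Exc^{-1}(A)\in\S_n(321)$, so that the excedances of $\pi$ are precisely the pairs $(i,j)$ with $[i,j-1]\in A$. First I would recall, using $\Exc=\ant\circ E_v$ and the characterizations in Section~\ref{sec:Dyck}, the precise combinatorial meaning of $\RSKD(\pi)$: its first $n$ steps are $\uu$ exactly at the rows where $\pi$ has a weak excedance read off the insertion tableau $P$, and its last $n$ steps are governed by the recording tableau $Q$ via $\RSK(\pi^{-1})=(Q,P)$ (Lemma~\ref{lem:RSKDsym}). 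The key classical input is the well-known description of RSK on $321$-avoiding permutations: the two rows of $P$ (resp.\ $Q$) record a greedy left-to-right pass, so that the top row of $P$ consists of the positions of left-to-right maxima–type data, and concretely the $i$th step of $\RSKD(\pi)$ being $\uu$ is equivalent to $i$ lying in the top row of $P$, which for $321$-avoiding $\pi$ has a direct description in terms of excedances/weak excedances. I would make this description completely explicit as a lemma (or cite where it is established in the paper's references), because everything downstream rests on it.

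Next I would translate $\Match$: a pair of positions $x<y$ in $\RSKD(\pi)$ forms an arc of $\Match(\RSKD(\pi))$ iff they form a tunnel, i.e.\ the steps in positions $x$ and $y$ are matched as opening/closing parentheses. So the goal reduces to: the matched-parenthesis pairing of the $\uu\dd$ word $\RSKD(\pi)$ coincides, vertex for vertex, with the matching produced by the Defant–Hopkins procedure of Definition~\ref{def:AST} applied to $A$. I would prove this by induction on $n$, or more cleanly by a direct parallel reading: process $i$ from $1$ to $n$. In the first case of Definition~\ref{def:AST} (when $[i,j-1]\in A$, i.e.\ $(i,j)$ is an excedance of $\pi$), vertex $j$ is matched counterclockwise to the nearest free vertex; I claim this mirrors inserting a $\dd$ at position $j$ of $\RSKD(\pi)$ that closes the most recent open $\uu$. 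In the second case (row $i$ is a weak deficiency, handled via $\ol i$), one matches $\ol i$ clockwise to the nearest free vertex, mirroring an opening step in the second half of $\RSKD(\pi)$, which by the $Q$-tableau/inverse-permutation symmetry corresponds to a deficiency of $\pi^{-1}$, i.e.\ again the RSK two-row structure. The bookkeeping is that "nearest unmatched vertex in a given rotational direction" is exactly the parenthesis-matching rule, once the $\uu\dd$ assignment to the $2n$ vertices has been verified to agree.

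The main obstacle I anticipate is precisely this verification of the $\uu/\dd$ labeling of the $2n$ cyclic vertices: one has to show that the first-half labels dictated by the insertion tableau $P$ (via Definition~\ref{def:RSKD}) together with the second-half labels dictated by the recording tableau $Q$ agree with the first-vs-second-case dichotomy of Definition~\ref{def:AST} for every $i$, and that the "counterclockwise from $j$" and "clockwise from $\ol i$" prescriptions are consistent with one global nesting structure rather than two independent ones. The cleanest route is probably to unwind $\RSKD$ through the chain $\RSKD = \Match^{-1}\circ\AST\circ\Exc$ only after first establishing the labeling, or alternatively to define an auxiliary labeled-path object and show both constructions produce it. I would also double-check the edge/degenerate cases (rows that are fixed points of $\pi$, the behavior near vertices $n$ and $\ol n$ at the "top" of the circle, and whether a peak $\uu\dd$ in positions $i,i$ in the middle is handled correctly), since those are where an off-by-one in the counterclockwise/clockwise conventions would surface. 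Once the labeling lemma is in place, the rest is a routine induction matching "nearest free vertex" with "innermost unmatched parenthesis," and the theorem follows; as a sanity check I would confirm it on the running example $A=\{[1,3],[3,4]\}$ in $\A^4$ against Figure~\ref{fig:ASTExample}.
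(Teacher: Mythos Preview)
Your proposal has a genuine gap in the key step. You claim that when $(i,j)$ is an excedance of $\pi$, processing vertex $j$ in $\AST$ ``mirrors inserting a $\dd$ at position $j$ of $\RSKD(\pi)$ that closes the most recent open $\uu$.'' This is false: the value $j$ of an excedance need not land in the second row of $P$, so position $j$ of $\RSKD(\pi)$ can be a $\uu$. For instance, take $\pi=23451$, with excedance $(2,3)$; one computes $P$ row~1 $=\{1,3,4,5\}$, so position $3$ is a $\uu$, and indeed in $\Match(\RSKD(\pi))$ vertex $3$ is matched to vertex $10$ to its \emph{right}, not to an earlier open $\uu$. More generally, the order in which $\AST$ processes vertices (jumping to $j$ or to $\ol i$ at step $i$) is not a left-to-right scan of the Dyck word, and the processed vertex does not carry a fixed $\uu/\dd$ role, so ``nearest unmatched in a given rotational direction equals parenthesis matching'' cannot be established in the direct way you outline.

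The paper sidesteps this difficulty entirely. Rather than matching arcs one by one, it introduces the notion of the \emph{bottom} of an arc and proves (Lemma~\ref{lem:UniqueMatchingData}) that a noncrossing matching is uniquely determined by its set of bottoms. It then computes this set for each side separately: Lemma~\ref{PermViaAST} shows that $B(\AST(\Exc(\pi)))$ consists of the excedance values together with the $\ol i$ for non-excedance positions $i$, while Lemma~\ref{PermViaRSKD} shows the same for $B(\Match(\RSKD(\pi)))$, the latter using the tunnel analysis of~\cite{EP1} to handle precisely the subtlety that excedance values can sit in either row of $P$. Equality of the two matchings then follows immediately. The ``fingerprint'' approach is what makes the argument go through without ever needing the processed vertices in $\AST$ to align with a consistent $\uu/\dd$ labeling.
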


As shown in~\cite{AST13}, the bijection $\AST$ is equivariant in the sense that 
\begin{equation}\label{eq:equivariantAST}
    \AST\circ\rA=\Rot\circ\AST.
\end{equation}
Using the description in Theorem~\ref{ASTConnection} of $\AST$ as a composition, the commutative diagram in Figure~\ref{fig:ASTrow} shows how the action of rowmotion can be interpreted at each step of the composition. For examples of these maps, see Figure~\ref{fig:CommutativeDiagramExample}.

\begin{figure}[h]
    \centering
    \begin{tikzpicture}
        \draw node (A0) at (-6,0) {$\cA(\A^{n-1})$};
        \draw node (A1) at (-6,-2) {$\cA(\A^{n-1})$};
        \draw node (S0) at (-3,0) {$\S_n(321)$};
        \draw node (S1) at (-3,-2) {$\S_n(321)$};
        \draw node (D0) at (0,0) {$\D_n$};
        \draw node (D1) at (0,-2) {$\D_n$};
        \draw node (M0) at (3,0) {$\NC_n$};
        \draw node (M1) at (3,-2) {$\NC_n$};
        \draw[->] (D0) -- node[right]{$\Pro^{-1}$} (D1);
        \draw[->] (M0) -- node[right]{$\Rot$} (M1);
        \draw[->] (S0) -- node[right]{$\rS$} (S1);
        \draw[->] (A0) -- node[right]{$\rA$} (A1);
        \draw[->] (S0) -- node[above]{$\Exc$} (A0);
        \draw[->] (S0) -- node[above]{$\RSKD$} (D0);        
        \draw[->] (D0) -- node[above]{$\Match$} (M0);
        \draw[->] (S1) -- node[below]{$\Exc$} (A1);
        \draw[->] (S1) -- node[below]{$\RSKD$} (D1);        
        \draw[->] (D1) -- node[below]{$\Match$} (M1);
    \draw[->] (-5.8,.3) to [bend left =15] node[above]{$\AST$} (2.8,.3);
    \draw[->] (-5.8,-2.3) to [bend left =-15] node[below]{$\AST$} (2.8,-2.3);
    \end{tikzpicture}
\caption{The interaction of the maps in Theorem~\ref{ASTConnection} with rowmotion.}
    \label{fig:ASTrow}
\end{figure}
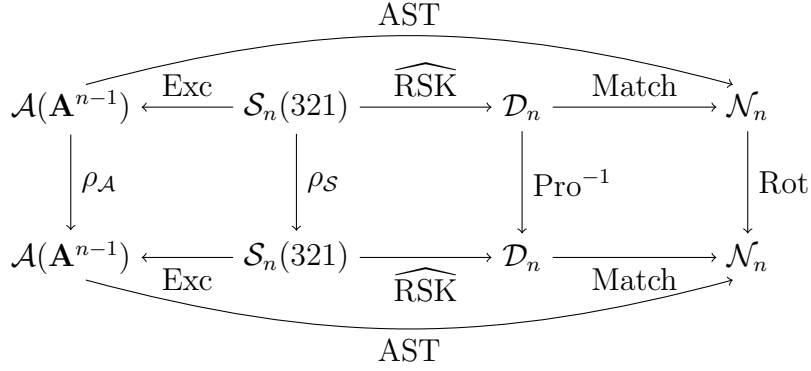

\begin{figure}[htb]
    \centering
    \begin{tikzpicture}[scale=1.05]
\draw[->] (3.5,1.3) to [bend left =15] node[above]{$\AST$} (14.3,1.7);
  \begin{scope}[shift={(2.25,-.5)},scale=.8]
\typeA{3}
\an{1}{2}\an{2}{4}
\draw[-to] (1.5,-1) --node[right]{$\rA$} (1.5,-2);
\end{scope}
  \begin{scope}[shift={(5.5,-1)},scale=.5]
	  \draw (0,0) grid (5,5);
 \draw[dotted] (0,0)--(5,5);
 \perm{{3,5,1,2,4}}
  \draw[red,ultra thick] (0,0) \N\N\N\E\N\N\E\E\E\E;
  \draw[-to] (-0.5, 2.66) --node[above]{$\Exc$} (-1.5,2.66);
\draw[-to] (2.5,-.5) --node[right]{$\rS$} (2.5,-2);
\end{scope}
\begin{scope}[shift={(9,0)},scale=.7]
\coordinate (a) at (1,0);
\coordinate (b) at (.5,.5);
\filldraw[thick] (0,0)\start\up\up\dn\up\dn\dn\up\up\dn\dn;
\draw[-to ] (-1.25, .5) --node[above]{$\RSKD$} (-.25,.5);
\draw[-to ] (5.5, .5) --node[above]{$\Match$} (6.75,.5);
\draw[-to] (2.5,-1.5) --node[right]{$\Pro^{-1}$} (2.5,-3);
\end{scope}
\begin{scope}[shift={(15.5,.4)},scale=.8]
\foreach \i in {1,...,5} {
\filldraw (90+18-\i*36:1.4) circle (.1);
 \draw (90+18-\i*36:1.7) node[scale=.7] {$\i$};
 \filldraw (90-18+\i*36:1.4) circle (.1);
 \draw (90-18+\i*36:1.7) node[scale=.7] {$\bar{\i}$};
}
 \match{1}{6}\match{2}{3}\match{4}{5}\match{7}{10}\match{8}{9}
 \draw[-to] (0,-2) --node[right]{$\Rot$} (0,-3);
\end{scope}
  \begin{scope}[shift={(2.25,-4.5)},scale=.8]
\typeA{3}
\an{1}{3}
\draw[-to] (1.5,-1) --node[right]{$\rA$} (1.5,-2);
\end{scope}

  \begin{scope}[shift={(5.5,-5)},scale=.5]
	  \draw (0,0) grid (5,5);
 \draw[dotted] (0,0)--(5,5);
 \perm{{4,1,2,3,5}}
  \draw[red,ultra thick] (0,0) \N\N\N\N\E\E\E\E\N\E;
  \draw[-to] (-0.5, 2.66) --node[above]{$\Exc$} (-1.5,2.66);
\draw[-to] (2.5,-.5) --node[right]{$\rS$} (2.5,-2);
\end{scope}
\begin{scope}[shift={(9,-4)},scale=.7]
\coordinate (a) at (1,0);
\coordinate (b) at (.5,.5);
\filldraw[thick] (0,0)\start\up\up\up\dn\up\dn\dn\dn\up\dn;
\draw[-to ] (-1.25, .5) --node[above]{$\RSKD$} (-.25,.5);
\draw[-to ] (5.5, .5) --node[above]{$\Match$} (6.75,.5);
\draw[-to] (2.5,-1.5) --node[right]{$\Pro^{-1}$} (2.5,-3);
\end{scope}
\begin{scope}[shift={(15.5,-3.6)},scale=.8]
\foreach \i in {1,...,5} {
\filldraw (90+18-\i*36:1.4) circle (.1);
 \draw (90+18-\i*36:1.7) node[scale=.7] {$\i$};
 \filldraw (90-18+\i*36:1.4) circle (.1);
 \draw (90-18+\i*36:1.7) node[scale=.7] {$\bar{\i}$};
}
\match{1}{8}\match{9}{10}\match{4}{3}\match{7}{2}\match{5}{6}
\draw[-to] (0,-2) --node[right]{$\Rot$} (0,-3);
\end{scope}
  \begin{scope}[shift={(2.25,-8.5)},scale=.8]
\typeA{3}
\an{4}{4}
  \draw (1.5,-1) node {$\cA(\A^4)$};
\end{scope}
  \begin{scope}[shift={(5.5,-9)},scale=.5]
	  \draw (0,0) grid (5,5);
 \draw[dotted] (0,0)--(5,5);
 \perm{{1,2,3,5,4}}
  \draw[red,ultra thick] (0,0) \N\E\N\E\N\E\N\N\E\E;
  \draw[-to] (-0.5, 2.66) --node[above]{$\Exc$} (-1.5,2.66);
    \draw (2.5,-.8) node {$\S_5(321)$};
\end{scope}
\begin{scope}[shift={(9,-8)},scale=.7]
\coordinate (a) at (1,0);
\coordinate (b) at (.5,.5);
\filldraw[thick] (0,0)\start\up\up\up\up\dn\up\dn\dn\dn\dn;
\draw[-to ] (-1.25, .5) --node[above]{$\RSKD$} (-.25,.5);
\draw[-to ] (5.5, .5) --node[above]{$\Match$} (6.75,.5);
    \draw (2.5,-2) node {$\D_5$};
\end{scope}
\begin{scope}[shift={(15.5,-7.6)},scale=.8]
\foreach \i in {1,...,5} {
\filldraw (90+18-\i*36:1.4) circle (.1);
 \draw (90+18-\i*36:1.7) node[scale=.7] {$\i$};
 \filldraw (90-18+\i*36:1.4) circle (.1);
 \draw (90-18+\i*36:1.7) node[scale=.7] {$\bar{\i}$};
}
\match{1}{10}\match{9}{2}\match{8}{3}\match{7}{6}\match{4}{5}
    \draw (0,-2.2) node {$\NC_5$};
\end{scope}
\end{tikzpicture}
    \caption{An example of Theorem~\ref{ASTConnection} applied to three elements of a rowmotion orbit. }
    \label{fig:CommutativeDiagramExample}
\end{figure}
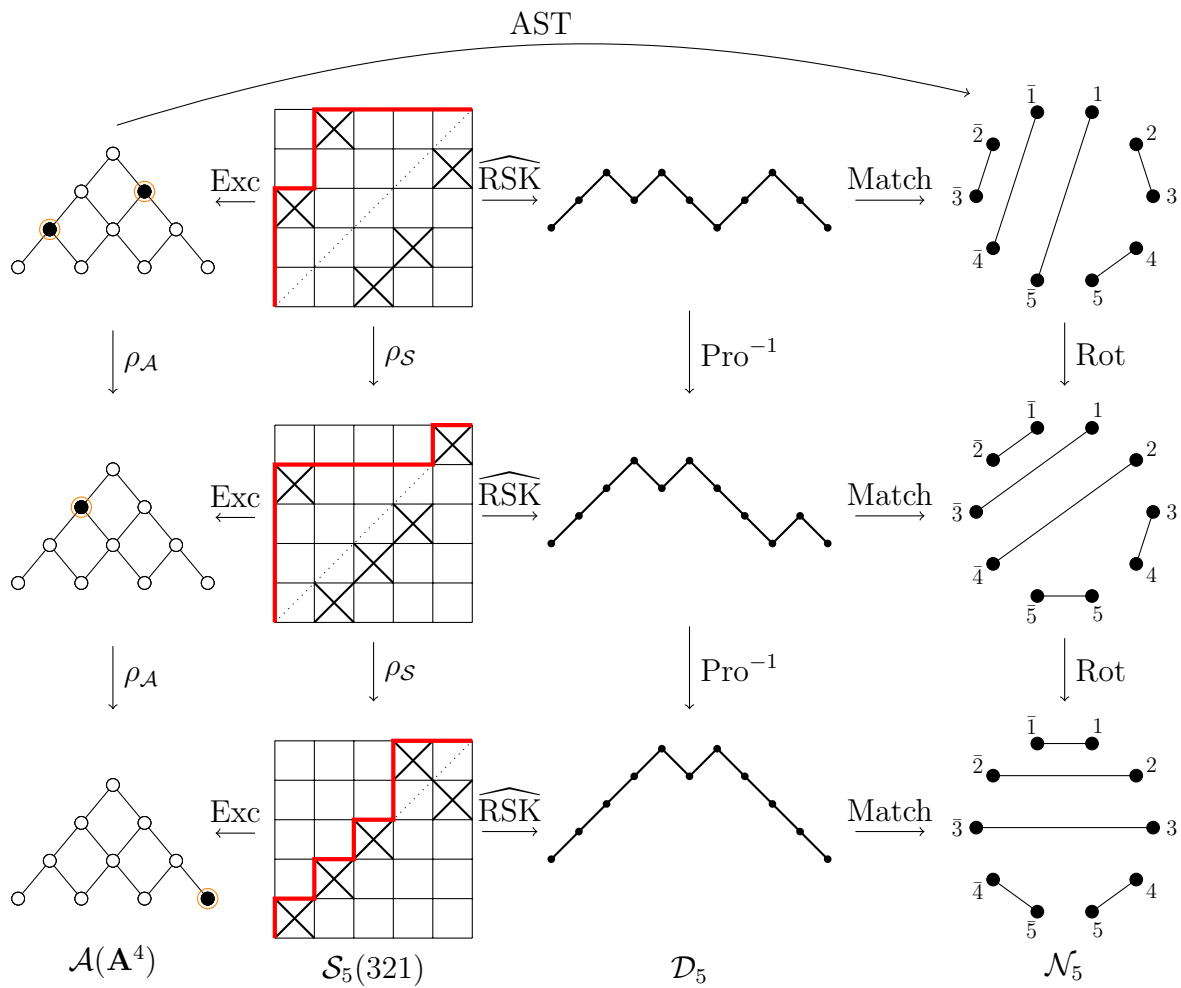

Before proving Theorem~\ref{ASTConnection}, we establish a property of noncrossing matchings that will be useful in the proof.
Given an arc of $M\in\NC_n$ with endpoints $x$ and $y$, where $1\le x<y\le 2n$, we define the {\em bottom} of the arc to be $y$ if $x+y\le 2n+1$, and $x$ otherwise. Note that when placing the points as described in Section~\ref{sec:Dyck}, the bottom is the lower endpoint, breaking ties by taking the left endpoint in that case. For $M\in\NC_n$, let $B(M)$ be the set of $n$ vertices that are bottoms of some arc in $M$. 
A vertex of the form $\ol{i}$ is identified with $2n+1-i$ in order to use the above definition,

\begin{lemma}\label{lem:UniqueMatchingData}
    Let $M_1,M_2\in \NC_n$ and suppose that $B(M_1)=B(M_2)$. Then $M_1=M_2$.
\end{lemma}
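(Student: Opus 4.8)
The plan is to show that a noncrossing matching $M\in\NC_n$ is completely determined by its set of bottoms $B(M)$, by reconstructing $M$ from $B(M)$ in a canonical way and verifying the reconstruction is forced at every step. The key structural fact to exploit is that, in the circular picture from Section~\ref{sec:Dyck}, an arc together with its bottom behaves like a matched pair of parentheses: reading the $2n$ vertices in the linear order $1,2,\dots,n,\ol n,\dots,\ol 1$, each arc has its bottom at the \emph{later} of its two endpoints when the arc lies in the left half (so the bottom is the right endpoint $y$, since $x+y\le 2n+1$), and at the \emph{earlier} endpoint otherwise; in both cases noncrossingness translates into a well-nested bracket structure. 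So $B(M)$ is exactly the set of ``closing'' positions of a balanced parenthesization, and the matching is recovered by the usual stack-matching of brackets.

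Concretely, first I would record the combinatorial translation: given $M\in\NC_n$, color a vertex $v$ as a closer if $v\in B(M)$ and an opener otherwise. I claim that in the word $w\in\{\texttt{(},\texttt{)}\}^{2n}$ obtained by reading vertices $1,2,\dots,2n$ in clockwise order and writing $\texttt{(}$ for openers and $\texttt{)}$ for closers, every prefix has at least as many openers as closers, i.e.\ $w$ is a Dyck word, and moreover the arcs of $M$ are precisely the pairs matched by the standard bracket matching of $w$. This is the crux of the argument. To prove it, I would argue that for each arc $a=(x,y)$ with bottom $b$ and top $t$, there is no vertex of the opposite ``type'' that would force a crossing: if $a$ is a left-half arc then $x<y\le n<2n+1-x$, its bottom is $y$, and any arc nested strictly inside the interval $(x,y)$ has both endpoints in $(x,y)$ and hence contributes a balanced block; any arc with exactly one endpoint in $(x,y)$ would cross $a$, which is forbidden. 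The mirror statement handles right-half and across arcs, using that the bottom-of-an-arc definition was precisely set up (as noted in the paragraph before the lemma) to pick the lower endpoint in the circular drawing, breaking ties on the left. Consequently the $t$'s and $b$'s interleave exactly as matched brackets do, so $w$ is Dyck and bracket-matching reproduces $M$.

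Given this, the lemma is immediate: if $B(M_1)=B(M_2)$ then $M_1$ and $M_2$ yield the same opener/closer word $w$, hence the same bracket matching, hence $M_1=M_2$. I expect the main obstacle to be the careful case analysis in establishing the Dyck/bracket correspondence — in particular handling the three kinds of arcs (left-half, right-half, across) uniformly and checking the tie-breaking convention for arcs symmetric about the vertical axis — rather than anything conceptually deep. An alternative, slightly slicker route that avoids most of the casework: observe that $B(M)$ has exactly one vertex from each arc, so $|B(M)|=n$ is automatic, and then induct on $n$ by peeling off a ``short'' arc. Specifically, among the closers (vertices in $B(M)$), take the one whose immediate clockwise predecessor is an opener; I would show this predecessor–successor pair must be an arc of $M$ (otherwise noncrossingness is violated, since the arc from that opener and the arc to that closer would have to interleave), delete both vertices, and apply the inductive hypothesis to the smaller matching, whose bottom set is determined by $B(M)$. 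Either way the reconstruction is forced, giving $M_1=M_2$.
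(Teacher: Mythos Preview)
Your central claim---that reading the vertices $1,2,\dots,2n$ clockwise and writing $\texttt{(}$ for non-bottoms and $\texttt{)}$ for bottoms yields a Dyck word whose standard bracket matching recovers $M$---is false. Take $n=2$ and $M=\{(1,2),(3,4)\}\in\NC_2$. For the arc $(1,2)$ we have $1+2\le 5$, so its bottom is $2$; for $(3,4)$ we have $3+4>5$, so its bottom is $3$. Thus $B(M)=\{2,3\}$ and the word is $()\,)(=\texttt{())(}$, which is not a Dyck word; standard bracket matching fails at position~$3$. The point you yourself note---that right-half arcs have their bottom at the \emph{earlier} endpoint---is exactly what breaks the ``closer $=$ right parenthesis'' picture in the linear order $1,\dots,2n$. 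No single linear reading order makes this work uniformly.

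Your alternative inductive route also has a gap. You correctly identify an arc $(v-1,v)$ forced to lie in $M$ (the argument that its absence produces a crossing is fine, at least when $v$ is chosen as the smallest closer). But after deleting it and relabeling to land in $\NC_{n-1}$, the threshold in the definition of ``bottom'' drops from $2n+1$ to $2n-1$, and this can flip which endpoint of a surviving arc is the bottom. Concretely, in $\NC_4$ the arc $(5,6)$ has bottom $5$ since $5+6>9$; after peeling $(1,2)$ and relabeling $k\mapsto k-2$, this becomes the arc $(3,4)$ in $\NC_3$, whose bottom is $4$ since $3+4\le 7$. So the new bottom set is not simply the relabeled image of $B(M)\setminus\{v\}$, and you cannot read off $B(M_1')=B(M_2')$ from $B(M_1)=B(M_2)$ alone. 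This is fixable---for instance by strengthening the inductive statement to noncrossing matchings on an arbitrary even subset of $\{1,\dots,2n\}$ with the \emph{same} threshold $2n+1$ throughout, so no relabeling is needed---but as written the induction does not close.

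The paper's proof takes a different tack that sidesteps both issues: it orders the bottoms $b_1,\dots,b_n$ by a specific rule (each $\ol i\in B$ is labeled $b_i$, and the remaining labels go to the right-half bottoms in increasing order), then greedily matches each $b_i$ to the nearest unmatched vertex weakly above it in the circular picture, and finally shows by a short crossing argument that any matching with bottom set $B$ must agree with this construction. The key is that ``above'' is with respect to height on the circle, not position in a linear order.
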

\begin{proof}
    Let $B=B(M_1)=B(M_2)$, and denote the bottoms in the right and left halves by $R=B\cap\{1,2,\dots,n\}$ and $L=B\cap\{\ol{n},\ol{n-1},\dots,\ol{1}\}$. Next we construct a matching $M$ that satisfies $B(M)=B$. First,
    label the elements of $B$ by $b_1,b_2,\dots,b_n$ as follows: label each $\ol{i}\in L$ with $b_i$, then assign the remaining labels to the elements of $R$ in increasing order of the indices.
    Now, for $i$ from $1$ to $n$, match $b_i$ to the closest (along the boundary of the circle) unmatched vertex which lies weakly above it. Such a vertex exists because, if $b_i=\ol{i}\in L$, then at most $2(i-1)$ of the $2i-1$  other vertices which lie weakly above $\ol{i}$ have been matched, so at least one is unmatched. Similarly, if $b_i\in R$, then $b_i>i$ by construction, and so at least one of the $2(b_i-1)>2(i-1)$ vertices that lie above $b_i$ is unmatched. Additionally, matching each bottom to the nearest unmatched vertex guarantees that $M$ is noncrossing. 
    
    Next we show that $M$ is the unique matching in $\NC_n$ with this set of bottoms. Suppose for contradiction that $M_1\in\NC_n$ satisfies $B(M_1)=B$ and $M_1\neq M$, and let $b_i$ be the first bottom in the ordering which is matched to a different vertex in $M$ and $M_1$. Suppose that this bottom is matched to vertex $v$ in $M$ and to vertex $v'\neq v$ in $M_1$. Then there must be some bottom $b_j$ with $j>i$ which is matched to vertex $v$ in $M_1$. But then the arc $(b_j,v)$ would cross the arc $(b_i,v')$, reaching a contradiction. It follows that $M_1=M=M_2$.
\end{proof}

We will prove Theorem~\ref{ASTConnection} in the equivalent form $\AST\circ\Exc=\Match\circ\RSKD$. 
The next two lemmas describe the behavior of these bijections from $\S_n(321)$ to $\NC_n$. 
If $(i,j)$ is an excedance of a permutation, we refer to $i$ as the {\em position} and to $j$ as the {\em value}.

\begin{lemma}\label{PermViaAST}
    Let $\pi\in \S_n(321)$. Then 
    \begin{multline*}B(\AST(\Exc(\pi)))=\{j: j \text{ is the value of an excedance of }\pi\}\\
    \cup\{\ol{i}: 1\le i\le n \text{ and $i$ is not the position of an excedance of }\pi\}.\end{multline*}
\end{lemma}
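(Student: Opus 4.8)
The statement describes $B(\AST(\Exc(\pi)))$, the set of "bottoms" of arcs in the matching $\AST(\Exc(\pi))$, directly in terms of the excedance structure of $\pi$. I would prove this by unwinding the recursive definition of $\AST$ (Definition~\ref{def:AST}) and checking that at each step, the vertex being newly matched contributes exactly the claimed bottom. Recall that $\Exc(\pi)=\{[i,\pi(i)-1]:(i,\pi(i))\text{ is an excedance}\}$, so the condition "$[i,j-1]\in\Exc(\pi)$ for some $j$" in Definition~\ref{def:AST} is exactly the condition "$i$ is the position of an excedance of $\pi$, and $j=\pi(i)$ is its value". Thus, running $i$ from $1$ to $n$, the first bullet fires precisely when $i$ is an excedance position, and it matches the vertex $j=\pi(i)$ (which lies in $\{1,\dots,n\}$ since $\pi(i)>i\ge 1$ and $\pi(i)\le n$) to the nearest unmatched vertex counterclockwise; the second bullet fires precisely when $i$ is not an excedance position, matching $\ol{i}$ to the nearest unmatched vertex clockwise.

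\emph{Key step: identifying which endpoint is the bottom.} I need to show that in each of the two cases, the vertex explicitly named in the bullet ($j$ in the first case, $\ol{i}$ in the second) is the bottom of the arc just created, i.e. the one closer to the bottom of the circle (with ties broken to the left). With the circular layout from Section~\ref{sec:Dyck}, vertex $j\in\{1,\dots,n\}$ sits at height decreasing as $j$ increases, and $\ol{i}=2n+1-i$ is its mirror image; the "bottom" of arc $\{x,y\}$ is $y$ if $x+y\le 2n+1$ and $x$ otherwise. In the first bullet, we match $j$ to the nearest unmatched vertex $v$ going counterclockwise — counterclockwise from $j$ means toward smaller labels, i.e. strictly \emph{higher} on the circle — so $v<j$ (after identifying bar-vertices with $2n+1-i$, $v$ could also be some $\ol{k}$, but one checks it still lies strictly above $j$), hence $j$ is the lower endpoint and is the bottom; since $j\le n$, the tie-break case $x+y=2n+1$ with $j$ on the left does not arise problematically. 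In the second bullet we match $\ol{i}$ to the nearest unmatched vertex going clockwise, which is again strictly above $\ol{i}$, so $\ol{i}$ is the bottom. The one genuine subtlety is confirming that "counterclockwise from $j$" and "clockwise from $\ol{i}$" always land strictly above the named vertex even when the nearest unmatched vertex is on the other side of the vertical axis — this requires a small monotonicity argument about how many vertices above a given one can already be matched, essentially the same counting estimate used in the proof of Lemma~\ref{lem:UniqueMatchingData}.

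\emph{Assembling the count.} Having shown that the bullet fired at step $i$ always produces a bottom equal to $j=\pi(i)$ when $i$ is an excedance position and equal to $\ol{i}$ otherwise, and that the partner vertex is never itself a bottom (it lies strictly above), I conclude that as $i$ ranges over $1$ to $n$ we obtain exactly $n$ distinct bottoms: the values $\pi(i)$ of excedances, together with the $\ol{i}$ for non-excedance positions $i$. Since a noncrossing matching of $2n$ vertices has exactly $n$ arcs and hence $n$ bottoms, this set is all of $B(\AST(\Exc(\pi)))$, which is the claim. (One should also note that these two families are disjoint: the first lies in $\{1,\dots,n\}$ and the second in $\{\ol{n},\dots,\ol{1}\}$.)

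\emph{Anticipated main obstacle.} The routine part is the bookkeeping of which bullet fires when; the delicate part is proving that the newly matched partner vertex always lies strictly above the named vertex, regardless of which half of the circle it falls in, so that the named vertex is unambiguously the bottom. This is where I expect to spend the most care, and I would handle it with the same "at most $2(k-1)$ of the $2k-1$ vertices weakly above have been matched" estimate that appears in Lemma~\ref{lem:UniqueMatchingData}, adapted to track separately the contributions from the first and second bullets processed so far.
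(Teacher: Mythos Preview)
Your proposal is correct and follows essentially the same route as the paper: identify which bullet of Definition~\ref{def:AST} fires at step $i$, then use the counting estimate (at step $i$ only $i-1$ arcs, hence $2(i-1)$ endpoints, have been placed) to force the partner vertex to lie among the vertices weakly above the named vertex, so that $j$ or $\ol{i}$ is the bottom; finish by comparing cardinalities. One simplification: you need not ``track separately the contributions from the first and second bullets''---the paper just uses the total of $i-1$ arcs placed before step $i$, observing that $2(i-1)<2i$ (so some vertex among $\{1,\dots,i,\ol{1},\dots,\ol{i}\}$, all of which precede $\ol{j}$ counterclockwise from $j$, is unmatched) in the excedance case, and $2(i-1)<2i-1$ in the non-excedance case.
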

\begin{proof}
It is enough to show that the right-hand side is contained in the left-hand side, since both sets have $n$ elements.
  Let $A = \Exc(\pi)$. By Definition~\ref{def:Exc}, the excedances of $\pi$ are $(i,j)$ for each $[i,j-1]\in A$. 
Let us show that, for each such $j$, we have $j\in B(\AST(A))$. Indeed, when $j$ is being matched in the computation of $\AST(A)$, only $i-1$ arcs have already been placed at this point. Thus, since $i<j$, not all $2i$ vertices of the form $k$ or $\ol{k}$ where $k\le i$, which are the ones nearest to $j$ in counterclockwise direction, have been already matched.  

Now suppose that $i$ is not the position of an excedance of $\pi$. We will show that  $\ol{i}$ is matched to some $k$ or $\ol{k}$ with $k\le i$. In this case, when $\ol{i}$ is being matched in the computation of $\AST(A)$, the $2(i-1)$ endpoints of the $i-1$ arcs that have already been placed cannot contain all the vertices in $\{\ol{i-1},\ol{i-2},\dots,\ol{1},1,2,\dots,i\}$, which are the $2i-1$ vertices nearest to $\ol{i}$ in clockwise direction, so $\ol{i}$ must be matched to one of these.
\end{proof}

\begin{lemma}\label{PermViaRSKD}
Let $\pi \in \S_n(321)$. Then 
  \begin{multline*}B(\Match(\RSKD(\pi)))=\{j: j \text{ is the value of an excedance of }\pi\}\\ 
    \cup\{2n+1-i: 1\le i\le n \text{ and $i$ is not the position of an excedance of }\pi\}.\end{multline*}
\end{lemma}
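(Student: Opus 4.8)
The plan is to unwind the definitions of $\RSKD$ and $\Match$ and read off which vertices are bottoms of arcs directly. Recall that for $\pi\in\S_n(321)$ with $\RSK(\pi)=(P,Q)$, the path $D=\RSKD(\pi)\in\D_n$ has, in positions $1,\dots,n$, a $\uu$ in position $i$ precisely when $i$ lies in the top row of $P$, and in positions $n+1,\dots,2n$, a $\dd$ in position $2n+1-i$ precisely when $i$ lies in the top row of $Q$. Under $\Match$, the vertices $x<y$ are matched when the steps in positions $x$ and $y$ form a tunnel, i.e.\ a matched pair of opening/closing parentheses; the bottom of this arc (in the sense defined before Lemma~\ref{lem:UniqueMatchingData}, identifying $\ol i$ with $2n+1-i$) is $y$ if $x+y\le 2n+1$ and $x$ otherwise. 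First I would observe that a vertex $v$ is a bottom of some arc of $\Match(D)$ if and only if one of two things happens: either $v\le n$ and the step of $D$ in position $v$ is a $\dd$ whose matching $\uu$ is also in position $\le n$ — equivalently, $v$ is a $\dd$ step in the ``left half'' that is matched within the left half — or $v>n$ and the step in position $v$ is a $\uu$ whose matching $\dd$ lies in position $\le n$ as well; but more simply, since tunnels that cross the center $x=n$ contribute their left endpoint as the bottom, the complete description is: the bottoms of $\Match(D)$ are exactly the $\dd$-steps of $D$ in positions $1,\dots,n$ together with the $\uu$-steps of $D$ in positions $n+1,\dots,2n$. This is the combinatorial heart of the matter and follows from the fact that every arc has exactly one endpoint among the steps that ``go down'' relative to the first-return/parenthesis structure once the path is folded at its midpoint. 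I would verify this claim carefully, since it is where the geometry of the circular picture meets the linear word.

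Granting that, the bottoms of $\Match(\RSKD(\pi))$ are: the positions $i\in\{1,\dots,n\}$ with a $\dd$ step, i.e.\ the $i$ \emph{not} in the top row of $P$; together with the positions $2n+1-i$ with $i\in\{1,\dots,n\}$ that carry a $\uu$ step, i.e.\ the $i$ \emph{not} in the top row of $Q$. So it remains to translate ``$i$ is not in the top row of $P$'' and ``$i$ is not in the top row of $Q$'' into the language of excedances. This is exactly where I would invoke the standard description of RSK on $321$-avoiding permutations (the same one underlying the maps $E_p,E_v,D_v$ and Definition~\ref{def:RSKD}), namely: $i$ is in the top row of $Q$ if and only if $i$ is the position of a weak excedance of $\pi$ — equivalently $i$ is \emph{not} in the top row of $Q$ iff $i$ is the position of a deficiency, i.e.\ $i$ is not the position of an excedance and not a fixed point — while $i$ is in the top row of $P$ if and only if $i$ is the \emph{value} of a weak excedance of $\pi$. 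I would cite \cite{SCH61} (or the treatment in~\cite{Eli,EP1}) for these facts, or alternatively derive them from the observation that $\RSKD(\pi)$ restricted to its first $n$ steps records the same data as $E_p(\pi)$ up to the peak/valley conventions. Combining, the left-half bottoms $\{i : i \text{ not in top row of }P\}$ become $\{i : i \text{ is the value of a deficiency or fixed point}\}$, and the right-half bottoms $\{2n+1-i : i \text{ not in top row of }Q\}$ become $\{2n+1-i : i \text{ is the position of a deficiency}\}$.

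The mild obstacle is bookkeeping with fixed points: an excedance of $\pi$ is $(i,j)$ with $j>i$, a deficiency is $(i,j)$ with $j<i$, and the stated lemma asserts the bottoms are the \emph{values of excedances} together with $2n+1-i$ for the $i$ that are \emph{not positions of excedances} (the latter set includes positions of deficiencies \emph{and} positions of fixed points). So I must check the two descriptions agree: on the left half, ``values of excedances of $\pi$'' should coincide with ``values of deficiencies of $\pi^{-1}$'' — which it does, since $(i,j)$ an excedance of $\pi$ iff $(j,i)$ a deficiency of $\pi^{-1}$ — but I want it stated as values of excedances of $\pi$, so I would instead use that $i$ is in the top row of $P$ iff $i$ is a value of a weak excedance, hence $i$ \emph{not} in the top row of $P$ iff $i$ is a value of a strict deficiency of $\pi$, i.e.\ $i=\pi(k)$ with $\pi(k)<k$; equivalently $i$ is the position of an excedance of $\pi^{-1}$. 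The cleanest route is to note $\RSK(\pi^{-1})=(Q,P)$, so by Lemma~\ref{lem:RSKDsym} the statement for $\pi$ follows from the statement for $\pi^{-1}$ with the roles of ``value of an excedance'' and ``position of an excedance'' interchanged, reducing everything to the single assertion ``$i$ is in the top row of $Q$ iff $i$ is the position of a weak excedance of $\pi$'', which is the basic RSK fact. I would organize the write-up around that reduction: establish the tunnel-bottom characterization, then the single RSK top-row fact, then let Lemma~\ref{lem:RSKDsym} do the symmetrization, and finally reconcile the fixed-point edge cases by hand.
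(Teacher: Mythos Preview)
Your central claim---that the bottoms of $\Match(D)$ are exactly the $\dd$-steps in positions $1,\dots,n$ together with the $\uu$-steps in positions $n+1,\dots,2n$---is false, and the argument cannot be repaired by the symmetrization you outline. A quick cardinality check already shows the problem: if there are $a$ down-steps among the first $n$ positions, then there are $n-a$ up-steps there, hence $a$ up-steps among the last $n$ positions, so your proposed set of bottoms has size $2a$, not $n$. Concretely, for $D=\uu\uu\uu\dd\uu\dd\dd\dd\uu\dd\in\D_5$ (the path in Figure~\ref{fig:Dyck-matching}) the tunnels are $(1,8),(2,7),(3,4),(5,6),(9,10)$ and the bottoms are $\{4,6,7,8,9\}$; positions $6,7,8$ are $\dd$-steps in the \emph{second} half, coming from left-across and centered tunnels. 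The issue is precisely the across-tunnels: a left-across or centered tunnel $(x,y)$ with $x\le n<y$ has bottom $y$, which is a $\dd$-step in the second half, while a right-across tunnel has bottom $x$, a $\uu$-step in the first half. Neither case fits your dichotomy.

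This is why the paper's proof has to work harder. It separates the excedance values into those that are bumped during RSK (which land in the second row of $P$ and give right endpoints of \emph{left} tunnels) and those that are not bumped (which stay in the top row of $P$ but nonetheless correspond to left endpoints of \emph{right-across} tunnels), invoking the analysis from \cite{EP1} for the latter. The symmetry via $\RSKD(\pi^{-1})$ and Lemma~\ref{lem:RSKDsym} that you propose is indeed used, but only after this finer tunnel-type analysis; your single RSK fact ``$i$ in the top row of $Q$ iff $i$ is the position of a weak excedance'' is correct and useful, but it does not by itself locate the bottoms, because knowing which half a step lies in and whether it is $\uu$ or $\dd$ does not determine whether it is the bottom of its arc.
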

\begin{proof}
It is enough to show that the right-hand side is contained in the left-hand side, since both sets have $n$ elements.
     Recall that $(x,y)$ is a left tunnel of $D\in\D_n$ if $x,y\le n$, a right tunnel if $x,y\ge n+1$, and otherwise it is a centered tunnel if $x+y=2n+1$, a left-across tunnel if $x+y<2n+1$, and a right-across tunnel if $x+y>2n+1$. 
     By Definition~\ref{def:Match}, we obtain $\Match(D)$ by matching every pair $(x,y)$ such that the steps of $D$ in these positions form a tunnel.
     Assuming that $x<y$, it follows that if $(x,y)$ is a left, centered, or a left-across 
    tunnel of $D$, then $y$ is a bottom of $\Match(D)$. Similarly, if $(x,y)$ is a right or a right-across 
    tunnel, then $x$ is a bottom of $\Match(D)$. 

Now let $(P,Q)=\RSK(\pi)$, and let $D = \RSKD(\pi)$. 
First we show that 
the values of the excedances of $\pi$ which are bumped when applying $\RSK$ are precisely 
the positions of the right endpoints of the left tunnels of $D$.
Indeed, each down step in the first half of $D$ comes from an entry $j$ in the second row of $P$, which must have been bumped by a smaller entry to its right in $\pi$. Since $321$-avoiding permutations can be decomposed into two increasing sequences consisting of excedances and weak deficiencies, respectively, it follows that $j$ is the value of an excedance of $\pi$. 

Next we argue that the values of the excedances of $\pi$ which are not bumped when applying $\RSK$ are exactly the positions of the left endpoints of the right-across tunnels of $D$. This follows by the argument in the proof of \cite[Lemma 6]{EP1}, which states that the number of right-across tunnels of $D$ equals the number of unmatched excedances of $\pi$ in a certain partial matching between the values of the excedances and weak deficiencies of $\pi$. This is proved by showing  that left endpoints of right-across tunnels correspond to values of unmatched excedances of $\pi$, which are exactly the values of excedances which are not bumped when applying $\RSK$. 

Combining the above two cases, we deduce that, if $j$ is the value of an excedance of $\pi$, then step $j$ in $D$ is the right endpoint of a left tunnel or the left endpoint of a right-across tunnel. In both cases, by the argument in the first paragraph of the proof, $j$ is a bottom of $\Match(D)$.

By considering $\pi^{-1}$, we can make a similar claim for the positions of the deficiencies, as inversion of permutations just swaps the values of the excedances with the positions of the deficiencies. 
By Lemma~\ref{lem:RSKDsym}, $\RSKD(\pi^{-1})$ is obtained by reflecting $D$ along its midpoint. Thus, the above argument allows us to conclude that if $i$ is the position of a deficiency of $\pi$, then $2n-i+1$ is the left endpoint of a right tunnel or the right endpoint of a left-across tunnel in $D$, hence a bottom in $\Match(D)$.

Finally, it is shown in \cite[Proposition 3]{EP1} that $i$ is a fixed point of $\pi$ if and only if $(i,2n+1-i)$ is a centered tunnel of $D$, in which case $2n+1-i$ is a bottom in $\Match(D)$. As weak deficiencies consist of deficiencies and fixed points, the result is shown.
\end{proof}

\begin{proof}[Proof of Theorem \ref{ASTConnection}]
     We prove the equivalent statement $\AST\circ \Exc = \Match\circ\RSKD$. For any $\pi \in \S_n(321)$, the matchings $\AST(\Exc(\pi))$ and $\Match(\RSKD(\pi))$ are noncrossing matchings with the same set of bottoms by Lemmas~\ref{PermViaAST} and~\ref{PermViaRSKD}, so they are the same matching by Lemma~\ref{lem:UniqueMatchingData}. 
\end{proof}

\subsection{$\AST$ in type $B$}

Let $\B^m$ denote the positive root poset of the type $B_m$ root system. The poset $\B^m$ can be described as the set of intervals $\{[i,j]: 1\le i\le j\le 2m-1,\, i+j\le 2m\}$ ordered by inclusion, and it is isomorphic to the quotient of $\A^{2m-1}$ by the relations of $[i,j]\sim [2m-j,2m-i]$ for all $[i,j]\in \A^{2m-1}$.  In particular, we will identify the set $\cA(\B^m)$ with the subset of $\cA(\A^{2m-1})$ consisting of those antichains that are invariant under vertical reflection, by associating each $A\in \cA(\B^m)$ to the antichain
\begin{equation}\label{eq:hatA}
\hat{A}=\{[i,j]:[i,j]\in A\text{ or }[2m-j,2m-i]\in A\}\in\cA(\A^{2m-1}).
\end{equation}

The Armstrong--Stump--Thomas bijection in type $B$ \cite{AST13} is the map $\AST_B$ defined for any $A\in\cA(\B^m)$ by $\AST_B(A)=\AST(\hat{A})$, where $\AST$ is the map from Definition~\ref{def:AST}. See Figure~\ref{fig:ASTB} for an example. 

\begin{figure}[h]
\centering
\begin{tikzpicture}
\typeB{3}
\an{1}{2} 
\draw (.4,.6) node[left,scale=.7] {$[1,2]$};
\draw[->] (3.5,1) -- (4,1);
 \draw[->] (3.5,3.5) to [bend left =15] node[above]{$\AST_B$} (12,3.5);
\begin{scope}[shift = {(4,0)}]
    \typeA{6}
    \an{1}{2}
    \an{6}{7}
    \draw (.4,.6) node[left,scale=.7] {$[1,2]$};
     \draw (5.6,.6) node[right,scale=.7] {$[6,7]$};
  \draw[->] (6, 1) --node[above]{$\AST$} (6.5,1);
\end{scope}
\begin{scope}[shift = {(13,1.5)}]
\foreach \i in {1,...,16} {
\filldraw (90+11.25-\i*22.5:1.6) coordinate (v\i) circle (.1);
}
\foreach \i in {1,...,8}
{
\draw (90+11.25-\i*22.5:1.9) node[scale=.7] {\i};
\draw (90-11.25+\i*22.5:1.9) node[scale=.7] {$\ol{\i}$};
}
\matc{2}{3}\matc{7}{8}\matc{15}{16}\matc{1}{14}\matc{4}{13}\matc{5}{12}\matc{10}{11}\matc{6}{9}
\end{scope}
\end{tikzpicture}
\caption{An example of $\AST_B$ applied to the antichain $\{[1,2]\}$ in $\B^4$.}
\label{fig:ASTB}
\end{figure}
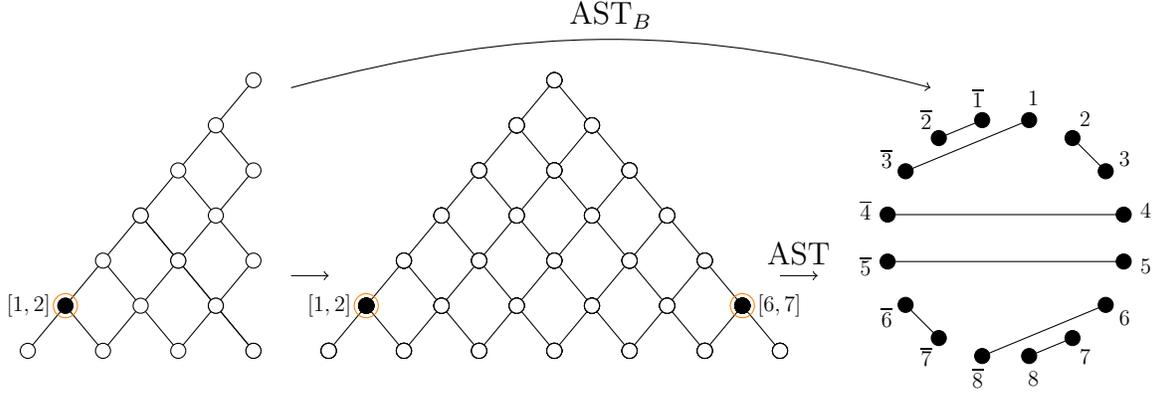

It is shown in \cite[Lemma 3.5]{AST13} that the map $\rA^n$ on $\cA(\A^{n-1})$, obtained by applying rowmotion $n$ times, sends an antichain $\{[i_1,j_1],\dots,[i_k,j_k]\}$ to the antichain $\{[n-j_1,n-i_1],[n-j_2,n-i_2],\dots,[n-j_k,n-i_k]\}$. 
The antichains that are invariant under this map are those that are symmetric under vertical reflection. When $n=2m$, this set was identified with $\cA(\B^m)$ via the embedding~\eqref{eq:hatA}. Thus, by the equivariant property~\eqref{eq:equivariantAST}, the image of $\AST_B$ consists of the matchings in $\NC_{2m}$ that are invariant under applying $\Rot^{2m}$, i.e., rotation by $180^\circ$. 
Matchings with this property are called {\em centrally symmetric}.
Denoting by $\CSNC_n$ the set of centrally symmetric matchings in $\NC_n$, one concludes that $\AST_B$ is a bijection between $\cA(\B^m)$ and $\CSNC_{2m}$.

It this section we give a new proof of this property of $\AST_B$. Unlike the proof in~\cite{AST13}, which relies on the properties of rowmotion, our proof follows from the alternative description of $\AST$ in terms of $\RSK$ gven in Theorem~\ref{ASTConnection}, using well-known properties of the $\RSK$ correspondence.

For $\pi\in \S_n$, its \emph{reverse-complement} is the permutation $\pi^{rc}$ such that $\pi^{rc}(i)=n+1-\pi(n+1-i)$ for all $i$. The array  of $\pi^{rc}$ is obtained by rotating the array of $\pi$ by $180^\circ$. As taking the inverse of $\pi$ corresponds to reflecting its array along the main diagonal, it follows that the array of $(\pi^{rc})^{-1}$ is obtained by reflecting the array of $\pi$ along the secondary diagonal, i.e., the one passing through the top-left and bottom-right corners of the array. 

Via the bijection $\Exc$ from Definition~\ref{def:Exc}, reflection of the array of a permutation along its secondary diagonal translates into a vertical reflection of the corresponding antichain. Thus, $\Exc$ restricts to a bijection between permutations in $\S_n(321)$ that are invariant under reflection along the secondary diagonal of the array, and antichains in $\cA(\A^{n-1})$ that are invariant under vertical reflection. 
When $n=2m$, we obtain a bijection
\begin{equation} \label{eq:Exc-rc}
\Exc:\{\pi\in\S_{2m}(321):\pi=(\pi^{rc})^{-1}\}\to\cA(\B^{m}).
\end{equation}

The goal of this section is to prove the following result.

\begin{theorem}\label{CentralSymmetry}
The map $\Match\circ\RSKD$ restricts to a bijection between $\{\pi\in\S_n(321):\pi=(\pi^{rc})^{-1}\}$ and $\CSNC_n$.
Thus, the map $\AST_B$ is a bijection between $\cA(\B^m)$ and $\CSNC_{2m}$.
\end{theorem}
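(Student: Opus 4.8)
The second statement (about $\AST_B$) follows immediately from the first: by Theorem~\ref{ASTConnection} we have $\AST_B = \AST\circ\hat{(\cdot)} = \Match\circ\RSKD\circ\Exc^{-1}\circ\hat{(\cdot)}$, and the bijection~\eqref{eq:Exc-rc} identifies $\cA(\B^m)$ with $\{\pi\in\S_{2m}(321):\pi=(\pi^{rc})^{-1}\}$; composing with the restriction of $\Match\circ\RSKD$ from the first statement yields a bijection onto $\CSNC_{2m}$. So the whole content is the first statement: $\Match\circ\RSKD$ restricts to a bijection between permutations $\pi\in\S_n(321)$ with $\pi=(\pi^{rc})^{-1}$ and centrally symmetric noncrossing matchings.

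**Key steps.** Since $\Match\circ\RSKD$ is already a bijection $\S_n(321)\to\NC_n$, it suffices to show that, for $\pi\in\S_n(321)$, the matching $\Match(\RSKD(\pi))$ is centrally symmetric (i.e.\ fixed by $\Rot^n$, rotation by $180^\circ$) if and only if $\pi=(\pi^{rc})^{-1}$. First I would identify what $180^\circ$ rotation of the matching corresponds to on the Dyck path side: $\Rot^n$ on $\NC_n$ corresponds, via $\Match$, to reversing the Dyck path and swapping $\uu\leftrightarrow\dd$ (the ``reflection'' map of Lemma~\ref{lem:RSKDsym}), because rotating the circular picture by half a turn swaps the two halves. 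Hence $\Match(\RSKD(\pi))$ is centrally symmetric $\iff$ $\RSKD(\pi)$ is invariant under this reflection. Second, by Lemma~\ref{lem:RSKDsym}, the reflection of $\RSKD(\pi)$ equals $\RSKD(\pi^{-1})$, so centrally symmetric $\iff$ $\RSKD(\pi)=\RSKD(\pi^{-1})$, i.e.\ (since $\RSKD$ is injective) $\iff \pi=\pi^{-1}$. But wait: that gives involutions, not the condition $\pi=(\pi^{rc})^{-1}$. The resolution is that I must instead use that $\RSKD$ is \emph{not} quite symmetric under $\pi\mapsto\pi^{rc}$ in the naive way; rather, I should compute directly how $\RSKD$ interacts with the reverse-complement. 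Using the standard RSK facts (from~\cite[Sec.\ 7.11]{EC2} or the evacuation description in Lemma~\ref{lem:EvacDescription}): if $\RSK(\pi)=(P,Q)$ then $\RSK(\pi^{rc})=(\evac(P),\evac(Q))$, and combining with $\RSK(\pi^{-1})=(Q,P)$ gives $\RSK((\pi^{rc})^{-1})=(\evac(Q),\evac(P))$. Translating through Definition~\ref{def:RSKD} and the bijection $\Tab$, the path $\RSKD((\pi^{rc})^{-1})$ is obtained from $\RSKD(\pi)$ by applying the reversal-and-swap operation on \emph{each half} separately, which is precisely the total reversal-and-swap of the whole word (since promotion/evacuation on a two-row tableau $\Tab(D)$ corresponds to the natural reversal of the half). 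So $\pi=(\pi^{rc})^{-1}\iff\RSKD(\pi)$ is fixed by whole-word reversal-and-swap $\iff\Match(\RSKD(\pi))\in\CSNC_n$.

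**Main obstacle.** The delicate point is pinning down exactly which symmetry of the Dyck path corresponds to central symmetry of the matching, and matching it correctly with the RSK-theoretic computation of $\RSKD((\pi^{rc})^{-1})$ versus $\RSKD(\pi^{-1})$. Concretely: Lemma~\ref{lem:RSKDsym} already tells us $\RSKD(\pi^{-1})$ is the full reversal-and-swap of $\RSKD(\pi)$, which would (wrongly) suggest involutions are the right class. The subtlety is that the reversal map on a matching drawn on the circle with vertices $1,\dots,n,\ol n,\dots,\ol 1$ is \emph{not} $\Rot^n$; reflecting a Dyck path end-to-end corresponds to reflecting the matching across a vertical axis, whereas $\Rot^n$ is the $180^\circ$ rotation, and these two symmetries of $\NC_n$ differ. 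I will need to carefully track, for an arc $(x,y)$ of $\Match(D)$, where it goes under $\Rot^n$ in terms of the tunnel structure of $D$, and verify that this corresponds on the permutation side to $(\pi^{rc})^{-1}$ rather than $\pi^{-1}$. An efficient way to do this cleanly is via the ``bottoms'' machinery of Lemma~\ref{lem:UniqueMatchingData}: show that $B(\Rot^n(M))$ is obtained from $B(M)$ by the map $v\mapsto v+n \pmod{2n}$ (suitably interpreted on bottoms), then use Lemmas~\ref{PermViaAST}/\ref{PermViaRSKD} to read off that $B(\Match(\RSKD((\pi^{rc})^{-1})))=B(\Rot^n(\Match(\RSKD(\pi))))$, concluding by Lemma~\ref{lem:UniqueMatchingData}. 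This ``bottoms'' route is likely the cleanest and sidesteps the error-prone direct arc-tracking; carrying out the bottom computation for $\pi^{rc}$ (the array rotated $180^\circ$) and comparing with the shifted bottom set is the one genuinely technical verification.
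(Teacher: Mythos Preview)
Your overall strategy is right: prove the stronger claim that $\Match\circ\RSKD$ intertwines the involution $\pi\mapsto(\pi^{rc})^{-1}$ on $\S_n(321)$ with $\Rot^n$ on $\NC_n$, and you correctly invoke Theorem~\ref{thm:RSKrc} to get $\RSK((\pi^{rc})^{-1})=(\evac(Q),\evac(P))$. But two concrete steps are wrong.

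First, in your ``Key steps'', the claim that ``$\RSKD((\pi^{rc})^{-1})$ is obtained from $\RSKD(\pi)$ by applying the reversal-and-swap operation on each half separately, which is precisely the total reversal-and-swap of the whole word'' is doubly false. Reversing-and-swapping each half of a word is \emph{not} the same as reversing-and-swapping the whole word (the two halves get exchanged in the latter but not the former). More importantly, evacuation on a two-row tableau $T$ does \emph{not} correspond to reversing-and-swapping the associated half-path: when $T$ is not rectangular that half-path has more $\uu$'s than $\dd$'s, so its reversal-and-swap is not even a valid prefix. This is exactly the gap the paper fills with Lemma~\ref{EvacLemma}, which shows instead that evacuation on $T\in\SYT_n^{\le2}$ reverses the associated \emph{partial matching} $M(T)$ (swapping vertices $k\leftrightarrow n+1-k$). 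From there one argues that $\Match(\RSKD(\pi^{rc}))$ is the horizontal reflection of $\Match(\RSKD(\pi))$, swapping the tableaux gives an additional vertical reflection, and the composite is $\Rot^n$.

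Second, your proposed ``bottoms'' route is salvageable, but your formula is off: one checks case by case that the bottom of a rotated arc is $t+n\pmod{2n}$ where $t$ is the \emph{top} of the original arc, so $B(\Rot^n(M))=\bigl(\{1,\dots,2n\}\setminus B(M)\bigr)+n$, not $B(M)+n$. With this correction, Lemma~\ref{PermViaRSKD} together with the elementary observation that $(i,j)$ is an excedance of $\pi$ iff $(n{+}1{-}j,\,n{+}1{-}i)$ is an excedance of $(\pi^{rc})^{-1}$ does give $B\bigl(\Match(\RSKD((\pi^{rc})^{-1}))\bigr)=B\bigl(\Rot^n(\Match(\RSKD(\pi)))\bigr)$, and Lemma~\ref{lem:UniqueMatchingData} finishes. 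Carried out correctly, this alternative would actually be shorter than the paper's route, avoiding the jeu-de-taquin induction of Lemma~\ref{EvacLemma} entirely.
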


We will prove this theorem using properties of the Robinson--Schensted--Knuth correspondence. The behavior of $\RSK$ under reverse-complementation and inversion of permutations is well understood.

\begin{theorem}[{\cite[Thm.\ A1.2.10]{EC2}}] \label{thm:RSKrc}
Let $\pi\in\S_n$ and suppose that $\RSK(\pi) = (P,Q)$. Then $\RSK(\pi^{rc})= (\evac(P),\evac(Q))$, and so $\RSK((\pi^{rc})^{-1}) = (\evac(Q),\evac(P))$. 
\end{theorem}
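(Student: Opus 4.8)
The plan is to derive both equalities from the single statement $\RSK(\pi^{rc})=(\evac(P),\evac(Q))$. The second equality is then immediate: applying the inverse rule $\RSK(\sigma^{-1})=(Q',P')$ (recalled in Section~\ref{sec:RSK321Basics} as a consequence of~\cite{SCH61}) to $\sigma=\pi^{rc}$ swaps the two tableaux, yielding $\RSK((\pi^{rc})^{-1})=(\evac(Q),\evac(P))$. So all of the content lies in the first equality.

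To prove $\RSK(\pi^{rc})=(\evac(P),\evac(Q))$, I would factor the reverse-complement as $\pi^{rc}=w_0\,\pi\,w_0$, where $w_0$ is the longest element, $w_0(i)=n+1-i$; here right-multiplication by $w_0$ is word reversal, since $(\pi w_0)(i)=\pi(n+1-i)$, and left-multiplication by $w_0$ is complementation, since $(w_0\pi)(i)=n+1-\pi(i)$. The argument rests on two single-operation formulas,
\begin{align*}
\RSK(\pi w_0)&=(P^t,\evac(Q)^t),\\
\RSK(w_0\pi)&=(\evac(P)^t,Q^t),
\end{align*}
where $T^t$ denotes transposition of a tableau. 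These are dual under inversion: using $(\pi w_0)^{-1}=w_0\pi^{-1}$ together with $\RSK(\sigma^{-1})=(Q',P')$, the second formula follows formally from the first, so only one needs an independent proof. Applying the first formula to $\pi$ and then the second to $\pi w_0$ gives
\[\RSK(w_0\pi w_0)=\bigl(\evac(P^t)^t,\ (\evac(Q)^t)^t\bigr).\]
I would simplify this using that transposition is an involution, so $(\evac(Q)^t)^t=\evac(Q)$, and that evacuation commutes with transposition, $\evac(T^t)=\evac(T)^t$ (a standard fact, e.g.\ from the transpose-symmetry of jeu de taquin), so that $\evac(P^t)^t=(\evac(P)^t)^t=\evac(P)$. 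This yields exactly $(\evac(P),\evac(Q))$, as wanted.

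The main obstacle is establishing the reversal formula $\RSK(\pi w_0)=(P^t,\evac(Q)^t)$. Its insertion-tableau part is Schensted's classical observation that reversing the word transposes the insertion tableau, since longest increasing and longest decreasing subsequences are interchanged. The recording-tableau part — that reversal sends $Q$ to $\evac(Q)^t$ — is the genuinely substantive point, and it is precisely here that evacuation enters. I would prove it using the jeu-de-taquin description of evacuation in Lemma~\ref{lem:EvacDescription}: track how the chain of shapes recorded while inserting $\pi$ transforms when the word is read in the opposite order, and identify the reversed chain with the rectification of the $180^\circ$-rotated, value-complemented recording tableau. Equivalently, and perhaps more cleanly, the whole theorem follows from Fomin's growth diagrams, in which the dihedral symmetries of the permutation matrix correspond to operations on $(P,Q)$: reflection across the main diagonal realizes inversion (swapping $P$ and $Q$), whereas rotation of the $n\times n$ diagram by $180^\circ$ realizes reverse-complementation, and one checks directly that the local growth rule carries the boundary shape-sequences to those of $(\evac(P),\evac(Q))$. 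Either route reduces the theorem to the compatibility of the RSK local rules with these symmetries.
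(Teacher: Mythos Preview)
The paper does not give its own proof of this statement; it is quoted as a known result from Stanley's \emph{Enumerative Combinatorics}, Vol.~2, Theorem~A1.2.10, and used as a black box in the proof of Theorem~\ref{CentralSymmetry}. So there is no in-paper argument to compare your proposal against.

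That said, your sketch is correct and is essentially the standard proof one finds in the cited reference and in Sch\"utzenberger's original work: factor $\pi^{rc}=w_0\pi w_0$, invoke the single-operation formulas $\RSK(\pi w_0)=(P^t,\evac(Q)^t)$ and $\RSK(w_0\pi)=(\evac(P)^t,Q^t)$ (the second following from the first via the inversion rule), and then simplify using $\evac(T^t)=\evac(T)^t$. The one step you flag as ``genuinely substantive'' --- that reversal sends $Q$ to $\evac(Q)^t$ --- is indeed the heart of the matter, and both routes you propose (tracking jeu-de-taquin slides, or reading off the dihedral symmetries of Fomin's growth diagram) are the standard ways to establish it. Your derivation of the second equality from the first via $\RSK(\sigma^{-1})=(Q',P')$ is exactly what the paper's ``and so'' is pointing at.
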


Denote by $\SYT_n^{\le2}$ the set of standard Young tableaux with $n$ boxes and at most two rows.
Generalizing the bijection $\Tab$ from Section~\ref{sec:promotion}, we can view any $T\in\SYT_n^{\le2}$ as a lattice path that stays weakly above the diagonal, where the $i$th step is a $\uu$ or a $\dd$ depending on whether $i$ is in the top or the bottom row of $T$, respectively. By generalizing the map in Definition~\ref{def:Match}, this path induces a partial matching on $\{1,2,\dots,n\}$ points, where two points are matched if the steps in those positions of the path form a tunnel. Denote this partial matching by $M(T)$. Note that $i$ is an unmatched vertex in $M(T)$ if and only if the $i$th step of the path is a $\uu$ step that is at a lower height that all the steps to its right.

\begin{lemma}\label{EvacLemma}
    For any $T\in\SYT_n^{\le2}$ , the partial matching $M(\evac(T))$ is obtained by swapping the roles of vertices $k$ and $n+1-k$, for $1\le k\le n$, in the partial matching $M(T)$. 
\end{lemma}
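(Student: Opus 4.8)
The plan is to use the description $\evac(T)=\jdt(\wt T)$ from Lemma~\ref{lem:EvacDescription} and to follow the partial matching through the rectification. First I would extend the definition of $M$ to \emph{two-row skew} standard tableaux: for $S\in\SYT(\lambda/\mu)$ with at most two rows, let its word have a $\uu$ in position $i$ if $i$ lies in the top row of $S$ and a $\dd$ otherwise, and let $M(S)$ be the matching of this word under the usual stack/parenthesis rule, now possibly leaving both unmatched $\uu$'s and unmatched $\dd$'s. Writing $f_S(k)=\#\{\text{top-row entries}\le k\}-\#\{\text{bottom-row entries}\le k\}$ for the height of the word after $k$ steps, the arcs of $M(S)$ are exactly the pairs $(a,b)$, $a<b$, with $f_S(a-1)=f_S(b)$ and $f_S(m)>f_S(a-1)$ for $a\le m<b$; the unmatched $\dd$'s sit at the positions of strict running minima of $f_S$, and when $S$ has straight shape (so $f_S\ge 0$) there are none. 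Since $\wt T$ is $T$ rotated by $180^\circ$ with each entry $i$ replaced by $n+1-i$, the word of $\wt T$ is the reverse-complement of the word of $T$; as in the proof of Lemma~\ref{lem:RSKDsym}, reversing and complementing a word sends each tunnel $(a,b)$ to the tunnel $(n+1-b,\,n+1-a)$, so the arcs of $M(\wt T)$ are exactly the reflections $k\mapsto n+1-k$ of the arcs of $M(T)$, while the $\lambda_1-\lambda_2$ unmatched $\uu$'s of $T$ become unmatched $\dd$'s of $\wt T$ at the reflected positions.

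The crux is the following claim: \emph{a single jeu de taquin slide on a two-row skew standard tableau $S$ changes the word only by lifting its lowest point}. More precisely, let $v$ be the first position at which $f_S$ attains its global minimum (with the convention $v=0$, a no-op, when $\min f_S=f_S(0)=0$); then the slide replaces $S$ by $S'$ with $f_{S'}(k)=f_S(k)$ for $k<v$ and $f_{S'}(k)=f_S(k)+2$ for $k\ge v$ — equivalently, the entry $v$ is moved from the bottom row up to the top row and nothing else changes rows. This is proved by following the hole: the only inner corner of a two-row skew shape sits at the right end of $\mu$ in the top row, so the hole travels right along the top row and then right along the bottom row; a short case analysis on whether the box below the inner corner is smaller than the box to its right, together with column-strictness, pins down the column where it drops and identifies the up-moved entry with the first global minimum of $f_S$.

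Granting the claim, $v$ is a strict running minimum of $f_S$, hence an unmatched $\dd$ of $M(S)$, and no arc of $M(S)$ touches or straddles position $v$ (a matched pair cannot enclose an unmatched close-parenthesis). Therefore every arc $(a,b)$ of $M(S)$ has $b<v$ or $a>v$, and in either case the defining relations $f_S(a-1)=f_S(b)$ and $f_S(m)>f_S(a-1)$ are shifted uniformly by $0$ or $2$ in passing to $S'$, so they survive; moreover $f_{S'}\ge f_{S'}(v-1)+1>f_{S'}(v-1)$ on $[v,n]$ (using $f_S\ge f_S(v)$ there, which holds because $v$ is the \emph{first} global minimum), so $v$ becomes an unmatched $\uu$ of $M(S')$ and no new arc is created. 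Thus a slide preserves the set of arcs of $M$. Applying this along the rectification of $\wt T$ to $\jdt(\wt T)=\evac(T)$, the matchings $M(\evac T)$ and $M(\wt T)$ have the same arcs; since $\evac(T)$ has straight shape its matching has no unmatched $\dd$'s, so the unmatched vertices of $M(\evac T)$ are forced to be the complement of the matched ones, and combining with the second paragraph we conclude that $M(\evac(T))$ is obtained from $M(T)$ by swapping $k\leftrightarrow n+1-k$.

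The main obstacle is the ``lifting the lowest point'' claim about a single slide: once it is established, the rest is bookkeeping with $f_S$. An alternative that sidesteps jeu de taquin is to set $\sigma=\RSK^{-1}(T,T)$, an involution, so that $\RSKD(\sigma)$ is the path of $T$ followed by its reverse-complement; by Theorem~\ref{thm:RSKrc}, $\RSK(\sigma^{rc})=(\evac T,\evac T)$, so $\RSKD(\sigma^{rc})$ is the path of $\evac T$ followed by its reverse-complement. Then Theorem~\ref{ASTConnection}, the equivariance~\eqref{eq:equivariantAST}, and the fact that $\rA^n$ is the vertical reflection of antichains (so corresponds to $\pi\mapsto(\pi^{rc})^{-1}$) identify $\Match(\RSKD(\sigma^{rc}))$ with the $180^\circ$ rotation of $\Match(\RSKD(\sigma))$; reading off the arcs lying in the first half and the endpoints of the arcs crossing the center then yields the claim. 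This route is shorter but invokes the main theorem of the section.
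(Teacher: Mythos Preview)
Your main approach is close but has a real gap in the ``crux claim''. The assertion that a single slide on a two-row skew tableau $S$ moves exactly the entry at the first global minimum of $f_S$ from the bottom row to the top row, and is a no-op precisely when $\min f_S=f_S(0)=0$, is false in general. Take $S$ of shape $(5,3)/(2)$ with top row $1,2,6$ (columns $3,4,5$) and bottom row $3,4,5$ (columns $1,2,3$). Then the word is $\uu\uu\dd\dd\dd\uu$, so $f_S=(0,1,2,1,0,-1,0)$ with $\min f_S=-1$ first attained at position $5$. But the slide from the inner corner $(1,2)$ moves right past columns $3$ and $4$ (since $1<4$ and $2<5$), then at column $4$ there is no bottom box, so the hole continues right and exits at $(1,5)$ without ever dropping: no entry changes rows. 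Thus $\min f_S<0$ yet the word is unchanged, contrary to your dichotomy.

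The correct statement is that the hole drops if and only if $\min f_S=-\mu_1$, and in that case the entry that moves up sits at the first position of this minimum; otherwise the word is unchanged. (You already implicitly prove the forward direction: your computation shows that when the hole drops at column $c$ one has $f_S(b_c)=-\mu_1$, and $f_S\ge-\mu_1$ always by column-strictness.) With this corrected claim your argument still goes through, since an unchanged word trivially preserves arcs. Alternatively, you can observe that for the specific tableaux arising in the rectification of $\wt T$ one always has $\min f_S=-\mu_1$: this is true for $\wt T$ itself (where $f_{\wt T}(n)=-(\lambda_1-\lambda_2)=-\mu_1$), and it is preserved by each slide because $f_{S}(v-1)=-\mu_1+1$ forces the new minimum to be exactly $-\mu_1+1=-\mu_1^{\text{new}}$. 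Either fix is short, but as written the claim and its ``short case analysis'' sketch are incorrect.

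For comparison, the paper's proof avoids tracking individual slides. It inducts on $n$: if $T$ is rectangular the result is immediate from $\evac(T)=\wt T$; otherwise, letting $s$ be the smallest unmatched vertex, it splits $T$ into a rectangular piece $T_{<s}$ and a remainder $T_{\ge s}$, then uses that $\jdt$ commutes with restricting to an interval of entries to reduce to the inductive hypothesis on $T_{\ge s}$. Your direct slide-by-slide approach is more elementary in that it does not invoke this commutation property, but it requires a more delicate analysis of exactly when the hole drops. Your alternative via Theorem~\ref{ASTConnection} is valid and not circular (that theorem does not use this lemma), but, as you note, it trades a local computation for the full equivariance of the AST bijection.
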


See Figure~\ref{fig:M} for an example.

\begin{figure}[htb]
\centering
\begin{tikzpicture}
\node[left] at (-.5,1) {$T = \young(123679,458\ten)$};
\coordinate (a) at (1,0);
\coordinate (b) at (.5,.5);
\filldraw[thick] (0,0)\start\up\up\up\dn\dn\up\up\dn\up\dn;
\begin{scope}[shift={(6,1)}]
\foreach \i in {1,...,10} {
\filldraw (90+9-\i*18:1.4) coordinate (v\i) circle (.1);
\draw (90+9-\i*18:1.7) node[scale=.7] {\i};
}
\matc{2}{5}\matc{3}{4}\matc{7}{8}\matc{9}{10}
\draw (2,0) node[right] {$M(T)$};
\end{scope}
\begin{scope} [shift={(0,-4)}]
\node[left] at (-.5,1) {$\evac(T) = \young(12567\ten,2489)$};
\coordinate (a) at (1,0);
\coordinate (b) at (.5,.5);
\filldraw[thick] (0,0)\start\up\dn\up\dn\up\up\up\dn\dn\up;
\begin{scope}[shift={(6,1)}]
\foreach \i in {1,...,10} {
\filldraw (90+9-\i*18:1.4) coordinate (v\i) circle (.1);
\draw (90+9-\i*18:1.7) node[scale=.7] {\i};
}
\matc{1}{2}\matc{3}{4}\matc{6}{9}\matc{7}{8}
\draw (2,0) node[right] {$M(\evac(T))$};
\end{scope}
\end{scope}
\end{tikzpicture}
    \caption{A standard Young tableau with two rows (top) and its evacuation (bottom), along with their associated paths and partial matchings.}
    \label{fig:M}
\end{figure}
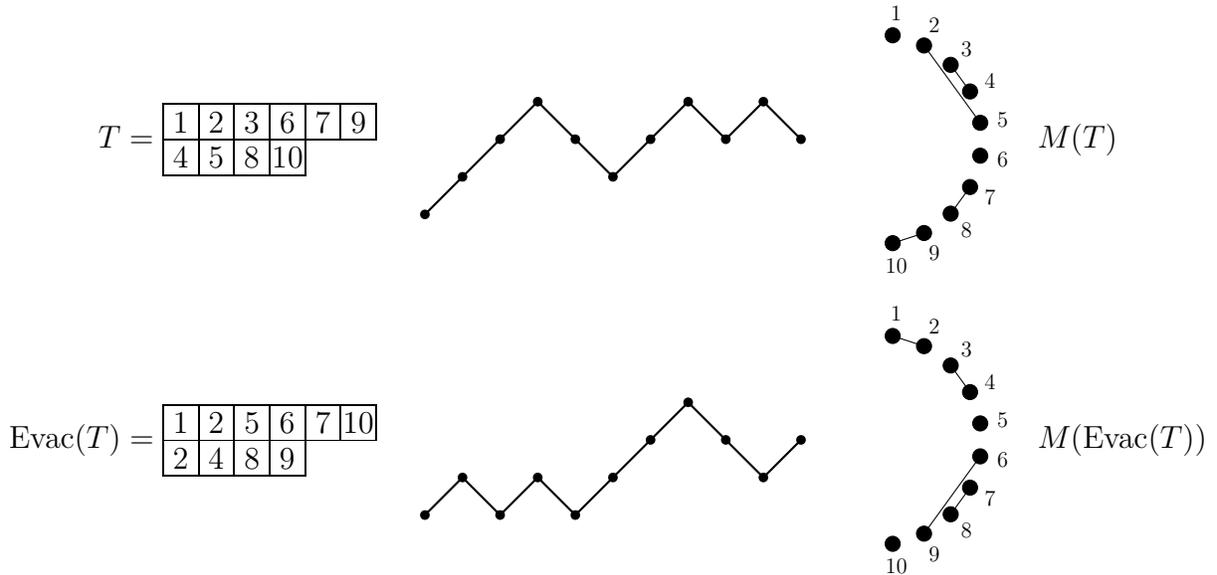

\begin{proof}
We use the fact that $\evac(T)=\jdt(\widetilde{T})$, as given by Lemma~\ref{lem:EvacDescription}.
We proceed by induction on $n$. The lemma trivially holds for $n=1$, so we assume that $n>1$. 

If $T$ has rectangular shape, then $\evac(T)=\widetilde{T}$, with no rectification needed. In this case, the lattice path corresponding to $\evac(T)$ is obtained by reflecting the lattice path of $T$ along a vertical line, and so $(x,y)$ is an arc in $M(T)$ if and only if $(n+1-y,n+1-x)$ is an arc in $M(\evac(T))$, proving the statement.

If $T$ is not rectangular, then $M(T)$ has unmatched vertices. Let $s$ be the smallest one. 
Note that the $s$th step of the associated lattice path is a $\uu$ step, and it is the last step of the path that touches the $x$-axis.

Suppose first that $s=1$. The above property of the $s$th step implies that, when removing the entry $1$ in $T$ and sliding the top row one position to the left, the resulting tableau still has increasing columns. Let $T'$ denote this tableau after subtracting one from its entries so that they range from $1$ to $n-1$.
When rectifying $\widetilde{T}$ starting at its inner corner, the top row will slide one position to the left, and then the entry $n$ will slide up to the first row. At this point in the rectification process, this tableau is precisely $\widetilde{T'}$ with the entry $n$ appended to the first row. Thus, when the  process is complete, the resulting tableau $\evac(T)$ consists of $\evac(T')$ with the entry $n$ appended to the first row.

By the induction hypothesis, $M(\evac(T'))$ is obtained from $M(T')$ by swapping the roles of $k$ and $n-k$ for all $k$. Since $M(T)$ is obtained from $M(T')$ by increasing the labels of all the vertices by one and adding an unmatched vertex labeled $1$, and $M(\evac(T))$ is obtained from $M(\evac(T'))$ by adding an unmatched vertex labeled $n$, it follows that $M(\evac(T))$ is obtained from $M(T)$ by swapping the roles of $k$ and $n+1-k$ for all $k$, completing the proof of this case.

Suppose now that $s>1$. The fact that $s$ is the smallest unmatched vertex of $M(T)$ implies that the subtableau $T_{<s}$ of $T$  consisting of the entries less than $s$ is a rectangular tableau. Denote by $T_{\ge s}$ be the subtableau of $T$ consisting of the entries greater than or equal to $s$, after subtracting $s-1$ from each entry so that they range from $1$ to $n-s+1$.
The matching $M(T)$ consists of the perfect matching $M(T_{<s})$ on the vertices $1,2,\dots,s-1$, together with the matching $M(T_{\ge s})$, with the labels shifted by $s-1$, on the vertices $s,s+1,\dots,n$.

Similarly, in $\widetilde{T}$, the entries larger than $n+1-s$ form a rectangular subtableau, and the entries less than or equal to $n+1-s$ form the subtableau $\widetilde{T_{\ge s}}$.
As explained in the proof of \cite[A.1.2.10]{EC2}, $\jdt$ commutes with removing entries that are less than (or larger than) an arbitrary threshold value. Thus, when applying $\jdt$ to $\widetilde{T}$ in order to obtain $\evac(T)$, the entries larger than $n+1-s$ do not change rows, whereas the entries less than or equal to $n+1-s$ form the subtableau $\evac(T_{\ge s})$. See Figure~\ref{fig:s>1} for an illustration.

\begin{figure}[htb]
\centering
\begin{tikzpicture}[scale=.6]
\node[left] at (-.3,1) {$T=$};
\filldraw[blue!20] (0,0) rectangle (3,1);
\filldraw[blue!40] (0,1) rectangle (3,2);
\filldraw[red!20] (3,0) -- (5,0) -- (5,1) -- (7,1) -- (7,2) -- (3,2) -- (3,0);
\draw (1.5,1) node {$T_{<s}$};
\draw (4,1) node {$T_{\ge s}$};
\begin{scope}[shift={(0,-3)}]
\node[left] at (-.3,1) {$\widetilde{T}=$};
\filldraw[blue!40] (4,0) rectangle (7,1);
\filldraw[blue!20] (4,1) rectangle (7,2);
\filldraw[red!20] (0,0) -- (4,0) -- (4,2) -- (2,2) -- (2,1) -- (0,1) -- (0,0);
\draw (3,1) node {$\widetilde{T_{\ge s}}$};
\draw[->] (8,1)-- node[above,scale=.8]{$\jdt$} (10.5,1);
\end{scope}
\begin{scope}[shift={(15,-3)}]
\node[left] at (-.3,1) {$\evac(T)=$};
\filldraw[blue!40] (2,0) rectangle (5,1);
\filldraw[blue!20] (4,1) rectangle (7,2);
\filldraw[red!20] (0,0) -- (2,0) -- (2,1) -- (4,1) -- (4,2) -- (0,2) -- (0,0);
\draw (2,1.5) node {$\evac(T_{\ge s})$};
\end{scope}

\end{tikzpicture}
    \caption{An illustration of the case $s>1$ in the proof of Lemma~\ref{EvacLemma}.}
    \label{fig:s>1}
\end{figure}
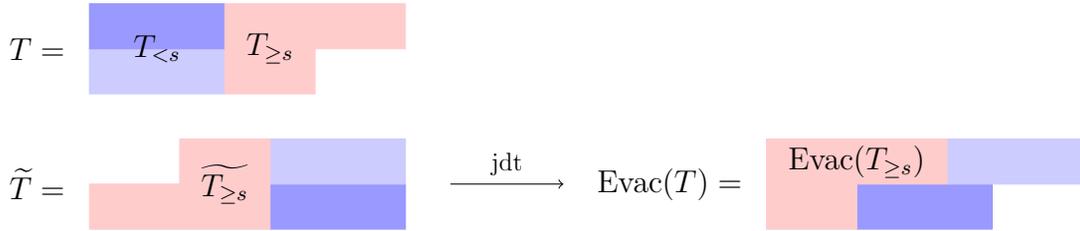

It follows that, for $1\le x,y<s$, the pair $(x,y)$ is an arc in $M(T)$ if and only if $(n+1-y,n+1-x)$ is an arc in $M(\evac(T))$, as in the rectangular case. 
On the other hand, the matching $M(T)$ restricted to the vertices $s,s+1,\dots,n$ equals $M(T_{\ge s})$ with the labels shifted by $s-1$, whereas the matching $M(\evac(T))$ restricted to the vertices $1,2,\dots,n+1-s$ is simply $M(\evac(T_{\ge s})$. By the induction hypothesis, $M(\evac(T_{\ge s}))$ is obtained from $M(T_{\ge s})$ by swapping the roles of $k$ and $n-s+2-k$ for all $k$. We conclude that $M(\evac(T))$ is obtained from $M(T)$ by swapping the roles of $k$ and $n+1-k$ for all $k$, completing the proof.
\end{proof}

\begin{proof}[Proof of Theorem \ref{CentralSymmetry}]
Let $\pi\in\S_n(321)$. If $\RSK(\pi)=(P,Q)$, we know by Theorem~\ref{thm:RSKrc} that $\RSK(\pi^{rc})=(\evac(P),\evac(Q))$. 
Let us show that applying evacuation to each of the tableaux corresponds to reflecting the resulting matching across a horizontal line, so that $\Match(\RSKD(\pi^{rc}))$ is a horizontal reflection of $\Match(\RSKD(\pi))$.

First observe that $\Match(\RSKD(\pi))$ is obtained by first drawing the arcs of $M(P)$, as defined above Lemma~\ref{EvacLemma}, on the vertices $1,2,\dots, n$, then drawing the arcs of $M(Q)$ on the vertices $\ol{1},\ol{2},\dots, \ol{n}$, and finally pairing up the unmatched vertices in $\{1,2,\dots, n\}$ with the unmatched vertices in $\{\ol{1},\ol{2},\dots, \ol{n}\}$ in increasing order.

By Lemma~\ref{EvacLemma}, the partial matching $M(\evac(P))$ is obtained from $M(P)$ by swapping the roles of vertices $k$ and $n+1-k$ for all $1\le k\le n$, and similarly $M(\evac(Q))$ is obtained from $M(Q)$ by swapping the roles of $\ol{k}$ and $\ol{n+1-k}$. In the matching $\Match(\RSKD(\pi^{rc}))$, the vertices that were unmatched in $M(\evac(P))$ and $M(\evac(Q))$  form the same pairs (up to reflection) as in  $\Match(\RSKD(\pi))$. We conclude that $\Match(\RSKD(\pi^{rc}))$ is obtained by reflecting $\Match(\RSKD(\pi))$ across a horizontal line.

Using that $\RSK((\pi^{rc})^{-1}) = (\evac(Q),\evac(P))$ by Theorem~\ref{thm:RSKrc}, and noting that swapping the two tableaux corresponds to reflecting the resulting matching across a vertical line, we conclude that the matching $\Match(\RSKD((\pi^{rc})^{-1}))$ is obtained from $\Match(\RSKD(\pi))$ by applying both a horizontal and vertical reflection, or equivalently, a $180^\circ$ rotation. In particular, $\Match\circ\RSKD$ restrcits to a bijection between permutations $\pi\in\S_n(321)$ satisfying that $\pi=(\pi^{rc})^{-1}$ and matchings in $\NC_n$ that are invariant under $180^\circ$ rotation, proving the first sentence in the statement of the theorem.
The second sentence follows now from Theorem~\ref{ASTConnection} and Equation~\eqref{eq:Exc-rc}, where $n=2m$.
\end{proof}

To conclude, let us show how these ideas can be used to enumerate antichains of $\B^m$ which are fixed under the action of $\LKA\circ \rA$, answering a question of Hopkins and Joseph \cite[Remark 6.7]{HJP}.  
In the following statement, the set $\cA(\B^m)$ is identified with the set of symmetric antichains in $\cA(\A^{2m-1})$ via the map $A\mapsto\hat{A}$ from Equation~\eqref{eq:hatA}. 

\begin{proposition}\label{prop:TypeB_LKA_fixed}
\[|\{A\in \cA(\B^m): \LKA(\rA(A)) = A\}| = 2^m.\]
\end{proposition}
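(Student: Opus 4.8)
The strategy is to mimic the proof of Theorem~\ref{thm:FixedOfLKA}, but now on the type-$B$ side, by transporting the count through the bijections established in the previous two sections. First I would use Equation~\eqref{eq:LKS_LKA_EXC}, which says $\LKA\circ\rA = \Exc\circ(\LKS\circ\rS^{-1})\circ\Exc^{-1}$ on $\cA(\A^{2m-1})$, together with Lemma~\ref{LK_of_Perm}, which identifies $\LKS\circ\rS^{-1}$ with the inversion map $\pi\mapsto\pi^{-1}$ on $\S_{2m}(321)$. Hence the antichains $\hat A\in\cA(\A^{2m-1})$ fixed by $\LKA\circ\rA$ correspond, via $\Exc$, to the $321$-avoiding involutions in $\S_{2m}$. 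The new ingredient is the symmetry constraint: under the identification of $\cA(\B^m)$ with the vertically-symmetric antichains in $\cA(\A^{2m-1})$, the bijection~\eqref{eq:Exc-rc} tells us that $\hat A$ being vertically symmetric corresponds to $\pi = (\pi^{rc})^{-1}$. So the quantity to compute is
\[
|\{\pi\in\S_{2m}(321) : \pi=\pi^{-1}\text{ and }\pi=(\pi^{rc})^{-1}\}|.
\]
Note $\pi=\pi^{-1}$ together with $\pi=(\pi^{rc})^{-1}$ forces $\pi^{rc}=\pi^{-1}=\pi$, i.e.\ the condition is equivalent to $\pi$ being a $321$-avoiding involution that is also invariant under reverse-complement (equivalently, $\pi$ commutes with the longest element and is an involution; equivalently the array of $\pi$ is symmetric under both diagonal and $180^\circ$ rotation).

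**Key steps.** (1) Translate the fixed-point count to the permutation side as above, arriving at the set $S=\{\pi\in\S_{2m}(321): \pi=\pi^{-1}=\pi^{rc}\}$. (2) Characterize $S$ combinatorially: since $\pi$ is an involution it is determined by its fixed points and transpositions $(a\,b)$ with $a<b$, the $321$-avoidance meaning the transpositions, ordered by smaller endpoint, are "nested or crossing-free in the excedance sense" — concretely $\pi\in\S_n(321)$ is an involution iff its excedances and deficiencies each form increasing sequences, which for an involution means the pairs $\{a,\pi(a)\}$ with $a<\pi(a)$ have the property that the $a$'s and the $\pi(a)$'s are each increasing. The extra condition $\pi=\pi^{rc}$ says the multiset of pairs is invariant under $x\mapsto 2m+1-x$. (3) Count these objects. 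The cleanest route is to pass through $\RSKD$: by Theorem~\ref{CentralSymmetry} (or rather its proof), $\pi=(\pi^{rc})^{-1}$ corresponds to $\RSKD(\pi)$ being a centrally symmetric Dyck path in $\D_{2m}$, and by Lemma~\ref{lem:RSKDsym}, $\pi=\pi^{-1}$ corresponds to $\RSKD(\pi)$ being a symmetric Dyck path (palindromic under reverse-and-swap). A Dyck path in $\D_{2m}$ that is simultaneously symmetric (under reversal+swap) and centrally symmetric (invariant under $\Rot^{2m}$ on the matching side, i.e.\ $180^\circ$ rotation) is determined by its first $m$ steps, which can be an arbitrary path from the origin to height... — here I would carefully work out that such paths are in bijection with lattice paths of length $m$ using steps $\uu,\dd$ that stay weakly above the $x$-axis but are otherwise unconstrained at the right end, giving $2^m$ of them. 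Summing up: $|S| = 2^m$, which is the claim.

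**Main obstacle.** The delicate part is step (3): pinning down exactly which Dyck paths $D\in\D_{2m}$ satisfy both symmetry conditions simultaneously and verifying the count is $2^m$ rather than a Catalan-type or central-binomial number. The two symmetries are of different natures — one is reflection of the path itself (reverse word, swap $\uu\leftrightarrow\dd$), the other is $180^\circ$ rotation of the associated noncrossing matching — so I need to express both as conditions on the step sequence $d_1d_2\cdots d_{4m}$ and see how they interact. I expect that "centrally symmetric matching" translates to $d_i = d_{i+2m}$ for all $i$ (the path is built from two copies of a length-$2m$ word), while "symmetric path" translates to $d_{4m+1-i} = \bar d_i$; combining these gives $d_{2m+1-i}=\bar d_i$ on the first half, so the path is determined by $d_1,\dots,d_m$ with each $d_i\in\{\uu,\dd\}$ free subject only to the global Dyck (prefix) condition — and I'd need to check that the prefix condition is automatically satisfied (or imposes nothing new) because the path reaches exactly height $0$ at position $2m$ and the antisymmetry $d_{2m+1-i}=\bar d_i$ makes the second quarter a mirror of the first. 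If that works out cleanly, the free binary choices $d_1,\dots,d_m$ give exactly $2^m$ paths. An alternative, possibly safer, route avoiding $\RSKD$ entirely is to directly count $321$-avoiding involutions $\pi\in\S_{2m}$ with $\pi=\pi^{rc}$ by a transfer-matrix or direct bijective argument on the nesting structure of their transpositions restricted to $\{1,\dots,m\}$; I would keep this in reserve in case the Dyck-path bookkeeping gets unwieldy. Either way, once the $2^m$ count is established the proposition follows by assembling steps (1)–(3).
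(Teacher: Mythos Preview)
Your reduction (steps (1)--(3)) is exactly what the paper does: via Equation~\eqref{eq:LKS_LKA_EXC}, Lemma~\ref{LK_of_Perm}, and the bijection~\eqref{eq:Exc-rc}, the fixed antichains correspond to $\{\pi\in\S_{2m}(321):\pi=\pi^{-1}=\pi^{rc}\}$. The divergence, and the gap, is in your counting step.

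Your primary route through Dyck paths breaks down at the claimed characterization of central symmetry. It is \emph{not} true that $\Match(D)\in\CSNC_{2m}$ iff $d_i=d_{i+2m}$. Already for $m=1$, the path $D=\uu\uu\dd\dd$ has $\Match(D)=\{(1,4),(2,3)\}$, which is fixed by $\Rot^2$, yet $d_1=\uu\ne\dd=d_3$. Central symmetry of $\Match(D)$ is a condition on the tunnel structure, not a periodicity of the step word; under $\RSKD$ it corresponds (via Theorem~\ref{thm:RSKrc}) to $P=\evac(Q)$, which does not translate to $d_i=d_{i+2m}$. Even if one grants your (incorrect) characterization, the assertion that the Dyck prefix condition is automatic also fails: with $d_i=d_{i+2m}$ and $d_{4m+1-i}=\bar d_i$, the choice $d_1=\dd$ (for $m=1$) forces a path beginning with $\dd$. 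So the $2^m$ you obtain this way is spurious.

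The paper sidesteps this entirely: after arriving at $\{\pi\in\S_{2m}(321):\pi=\pi^{-1}=\pi^{rc}\}$, it applies the reversal map $\pi\mapsto\pi^r$ (which sends $321$-avoiders to $123$-avoiders, involutions to inverse-reverse-complements, and interchanges the two diagonal symmetries of the array) to identify this set with $123$-avoiding involutions fixed by reverse-complementation, and then cites Egge \cite[Thm.~3.8(iv)]{EGGE07} for the count $2^m$. If you want a self-contained count without Egge, the cleanest route is to observe that under $\RSK$ the set corresponds to two-row tableaux $P\in\SYT_{2m}^{\le 2}$ with $P=\evac(P)$ and enumerate those (e.g.\ via domino tableaux or folding); your fallback ``transfer-matrix or direct bijective argument'' would also work, but needs to be actually carried out.
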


\begin{proof}
We claim that the map $\Exc$ from Definition~\ref{def:Exc} restricts to a bijection
\begin{equation}\label{eq:Exc-rc-inv}
\Exc:\{\pi\in\S_{2m}(321):\pi=(\pi^{rc})^{-1}\text{ and }\pi^{-1}=\pi\}\to\{A\in \cA(\B^m): \LKA(\rA(A)) = A\}.
\end{equation}

Let $\pi\in\S_{2m}(321)$ and $A=\Exc(\pi)$. By Equation~\eqref{eq:Exc-rc}, the condition $\pi=(\pi^{rc})^{-1}$ is equivalent to the fact that $A\in \cA(\B^m)$, so it suffices to show that the condition $\pi=\pi^{-1}$ is equivalent to the fact that $\LKA(\rA(A)) = A$. 
By Equation~\eqref{eq:LKS_LKA_EXC}, the maps $\LKA\circ \rA$ and $\rS\circ\LKS$ are conjugate via $\Exc$, and so $\LKA(\rA(A)) = A$ if and only $\pi=\rS(\LKS(\pi))=\pi^{-1}$, where the last equality follows from Lemma~\ref{LK_of_Perm}.

Next, observe that the left-hand side of Equation~\eqref{eq:Exc-rc-inv} consists of $321$-permutations whose array is symmetric with respect to reflection across both the main and the secondary diagonals. Applying reversal, these are in bijection with $123$-avoiding permutations whose array is symmetric with respect to both diagonals; equivalently, $123$-avoiding involutions that are fixed under reverse-complementation. It is known~\cite[Thm.\ 3.8 (iv)]{EGGE07} that the number of such involutions is $2^m$.
\end{proof}

\section{Future work}
To each poset $\P$ one can associate its chain polytope $\mathcal{C}(\P)$, which is a polytope in $\mathbb{R}^\P$ whose vertices correspond to the antichains of $\P$, see \cite{RS86} for definitions. Rowmotion on antichains of $\P$ can then be extended to a piecewise-linear map on $\mathcal{C}(\P)$, and then detropicalized (by replacing sums with products, and maxima with sums) into a birational map, see \cite{HJ20} and \cite{EP21} for details, as well as for the definitions of homomesy in these settings.

We showed in Section~\ref{homomesies} that the statistics $\fp$ and $\ell_i$ on $321$-avoiding permutations can be translated into statistics on antichains of $\A^{n-1}$. Despite being less natural than their permutation counterparts, these statistics are, by construction, homomesic under antichain rowmotion. Equations~\eqref{eq:fpA} and \eqref{eq:liA} express these statistics as linear combinations of indicator functions and of maxima or minima of indicator functions of antichain elements. As such, they can be interpreted as statistics on the chain polytope $\mathcal{C}(\A^{n-1})$, and furthermore they can be detropicalized and lifted to the birational setting.

We conjecture that, when lifted to the piecewise-linear or birational settings, the statistics $\fp$ and $\ell_i$ from Equations~\eqref{eq:fpA} and \eqref{eq:liA} still exhibit homomesy. This would generalize Theorems~\ref{thm:fp} and~\ref{thm:li}. Computationally, we have verified this claim for $\A^{n-1}$ with $n\le 6$, for all values of~$i$.

In a different direction, after having given a description in Section~\ref{sec:AST} of the Armstrong--Stump--Thomas bijection in types $A$ and $B$ in terms of the $\RSK$ correspondence, one could ask whether there is a related description of $\AST$ in type $D$ using permutations and tableaux.

\subsection*{Acknowledgements} We thank Sam Hopkins, Michael Joseph, Tom Roby, Jessica Striker, and Justin Troyka for helpful discussions.

\end{document}